\newcommand{\uple}[1]{{\langle #1 \rangle}}
\newcommand{\FI}{\varphi}
\newcommand{\A}{\mathcal{A}}
\newcommand{\G}{\mathcal{G}}
\newcommand{\M}{\mathcal{M}}
\newcommand{\N}{\mathbb{N}}
\renewcommand{\O}{\mathcal{O}}
\renewcommand{\P}{\mathcal{P}}
\newcommand{\Q}{\mathbb{Q}}
\newcommand{\R}{\mathbb{R}}
\renewcommand{\S}{\mathcal{S}}
\newcommand{\X}{\mathcal{X}}
\newcommand{\comp}[1]{{#1}^{\textrm{c}}}
\newcommand{\defin}[1]{\emph{\textbf{#1}}}
\newcommand{\puce}{$\bullet$ }
\newcommand{\leqplus}{<\!\!\!\!\!^{\!^+}~}
\newcommand{\Rplus}{\overline{\R}^{_+}}
\newcommand{\diam}{\mathrm{diam}}
\newcommand{\eff}{\mathrm{eff}}
\newcommand{\lK}{\underline{\K}}
\newcommand{\uK}{\overline{\K}}
\newcommand{\K}{\mathcal{K}}
\newcommand{\Ksym}{\K_\mu}
\newcommand{\CPSs}{computable probability spaces}
\newcommand{\Y}{\mathcal{Y}}
\newtheorem{lemma}{Lemma}[subsection]
\newtheorem{proposition}{Proposition}[subsection]
\newtheorem{theorem}{Theorem}[subsection]
\newtheorem*{theorem_star}{Theorem}
\newtheorem{corollary}{Corollary}[subsection]
\theoremstyle{definition}
\newtheorem{definition}{Definition}[subsection]
\theoremstyle{remark}
\newtheorem{remark}{Remark}[subsection]
\newtheorem*{Examples}{Examples}
\newenvironment{examples}{\noindent\begin{Examples}\begin{enumerate}}{\end{enumerate}\end{Examples}}
\newenvironment{remarks}{\noindent\textbf{remarks:}\begin{itemize}}{\end{itemize}}
\title[The dynamics of algorithmically random points]{Effective symbolic dynamics, random points, statistical behavior, complexity and entropy\footnote{partly supported by ANR Grant  05 2452 260 ox}}
\begin{document}

\author{Stefano Galatolo}
\address{Dipartimento di Matematica Applicata, Universita di Pisa}
\email{s.galatolo@docenti.ing.unipi.it}
\author{Mathieu Hoyrup}
\address{LIENS, Ecole Normale Sup\'erieure, Paris}
\email{hoyrup@di.ens.fr}
\author{Crist\'obal Rojas}
\address{LIENS, Ecole Normale Sup\'erieure and CREA, Ecole Polytechnique,
Paris}
\email{rojas@di.ens.fr}

\begin{abstract}
We consider the dynamical behavior of Martin-L\"of random points in
dynamical systems over metric spaces with a computable dynamics and
a computable invariant measure. We use computable partitions to
define a sort of effective symbolic model for the dynamics. Through
this construction we prove that such points have typical statistical
behavior (the behavior which is typical in the Birkhoff ergodic
theorem) and are recurrent. We introduce and compare some notions of
complexity for orbits in dynamical systems and prove: (i) that the
complexity of the orbits of random points equals the Kolmogorov-Sina\"i entropy of the
system, (ii) that the supremum of the complexity of orbits equals the
topological entropy.
\end{abstract}
\maketitle

\tableofcontents

\section{Introduction}

The  randomness of a particular outcome is always relative to some
statistical test. The notion of algorithmic randomness, defined by
Martin-L\"of in 1966, is an attempt to have an ``absolute'' notion of
randomness. This absoluteness is actually relative to all
``effective'' statistical tests, and lies on the hypothesis that this
class of tests is sufficiently wide.

Martin-L\"of's original definition was given for infinite symbolic
sequences. With this notion each single random sequence behaves as a
generic sequence of the probability space for each effective statistical
test. In this way many probabilistic theorems having almost everywhere
statements can be translated to statements which hold \emph{for each}
random sequence. As an example we cite the fact that in each infinite
string of 0's and 1's which is random for the uniform measure, all the
digits appear with the same limit frequency.
This is a particular case, related to the strong law of large numbers (or Birkhoff ergodic theorem). A general statement of this kind was given by V'yugin (Birkhoff ergodic theorem for individual random sequences, see \cite{Vyu97} and lemma \ref{vyugin} below).

Recently the notion of Martin-L\"of randomness was generalized to
computable metric spaces endowed with a measure (\cite{Gac05, HoyRoj07}). Computable metric spaces are separable metric spaces where the distance can be in some sense effectively computed
(see section \ref{CMS} ). In those spaces, it is also possible to define
``computable'' functions, which are functions whose behavior is in some
sense given by an algorithm, and ``computable'' measures (there is an
algorithm to calculate the measure of nice sets). The space of infinite
symbolic sequences, the real line or euclidean spaces, are examples of
metric spaces which become computable in a very natural way.

A particularly interesting class of general stationary stochastic processes
is constituted by those generated by a measure-preserving map on a metric
space, these are the objects studied by ergodic theory.  In this paper we
consider systems of the type $(X,T,\mu)$, where $X$ is a computable metric
space, $\mu$ a computable probability measure and $T$ a computable
endomorphism. The above considered symbolic shifts on spaces of infinite
sequences which preserve a computable measure are systems of this kind.

In the classical ergodic theory, a powerful technique (symbolic dynamics)
allows to associate to a general system as above $(X,T,\mu )$ a shift on a
space of infinite strings having similar statistical properties. In section
\ref{esd} we use the algorithmic features of computable metric spaces to
define computable measurable partitions and construct effective symbolic
models for the dynamics. In this models {\em random points are associated
to random infinite strings}.  This tool allows to generalize theorems
which are proved in the symbolic setting to the more general setting of
maps and metric spaces.  For example the above cited V'yugin theorem
becomes a Birkhoff theorem for random points. On this line, we also prove a
Poincar\'e recurrence theorem for random points. Those statements (see
thm.\ref{birkhoff_theorem} and prop. \ref{poinc}) can be summarized as
\begin{theorem_star}
Let $(X,\mu)$ be a computable probability space. If $x$ is $\mu$-random,
then it is recurrent with respect to {\em every } measure preserving
endomorphism $T$ on $(X,\mu)$.  Moreover, each $\mu$-random point $x$ is
typical for {\em every} ergodic endomorphism $T$, i.e.
\begin{equation}
\lim_{n \to \infty} \frac{1}{n} \sum_{j=0}^{n-1}f(T^jx) =\int f d\mu
\end{equation}
for every continuous bounded real-valued $f$.
\end{theorem_star}

In the remaining part of the paper these tools are also used to prove
relations between various definitions of orbit complexity and entropy of
the systems.



In \cite{Bru83}, Brudno defined a notion of algorithmic
complexity $\uK(x,T)$ for the orbits of
a dynamical system on a compact space. It is a measure of the information
rate which is necessary to describe the behavior of the orbit of $x$. In
this point-wise definition the information is measured by the Kolmogorov
information content. Later, White (\cite{Whi93}) also introduced a slightly different version
$\lK(x,T)$. Brudno then proved the following results, later improved by White:
\begin{theorem_star}[Brudno, White]\label{brudno_theorem}
Let $X$ be a compact topological space and $T:X\to X$ a continuous map.
\begin{enumerate}
\item For any ergodic probability measure $\mu$ the equality
$\lK(x,T)=\uK(x,T)=h_\mu(T)$ holds for $\mu$-almost all $x\in X$,
\item For all $x\in X$, $\uK(x,T)\leq h(T)$.
\end{enumerate}
\end{theorem_star}
where $h_\mu(T)$ is the Kolmogorov-Sina\"i entropy of $(X,T)$ with respect
to $\mu$ and $h(T)$ is the topological entropy of $(X,T)$. This result
seems miraculous as no computability assumption is required on the space or
on the transformation $T$. Actually, this miracle lies in the compactness
of the space, which makes it finite when observations are made with finite
precision (open covers of the space can be reduced to \emph{finite} open
covers). Indeed, when the space is not compact, it is possible to construct
systems for which the algorithmic complexity of orbits is correlated in no
way to their dynamical complexity. In \cite{Gal00}, Brudno's definition was
generalized to non-compact computable metric spaces. This definition
coincides with Brudno's one in the compact case and will be given in
section \ref{oc}.

The above definitions of orbit complexities follow a topological
approach. We show that the measure-theoretic setting also provides a
natural notion of orbit complexity $\Ksym(x,T)$ defined by computable
partitions.  This kind of orbit complexity will be defined almost
everywhere and in particular at each $\mu$-random point. For this notion
the first result in Brudno and White's theorem comes easily. We go further
in showing:
\begin{theorem_star}[\ref{final}]
Let $T$ be an ergodic endomorphism of the computable probability space $(X,\mu)$,
$$
\Ksym(x,T)=h_\mu(T) \quad\text{ for all $\mu$-random point $x$.}
$$
\end{theorem_star}

We then prove that the two notions of orbit complexity coincide on
Martin-L\"of random points:
\begin{theorem_star}[\ref{shadsym}]
Let $T$ be an ergodic endomorphism of the computable probability space
$(X,\mu)$, where $X$ is compact,
$$
\Ksym(x,T)=\uK(x,T)\quad\text{ for all $\mu$-random point $x$.}
$$
\end{theorem_star}

In the topological context, we then consider $\uK(x,T)$ and strengthen the
second part of Brudno's theorem, showing:
\begin{theorem_star}[\ref{theorem_top_entropy}]
Let $T$ be a computable map on a compact computable metric space $X$,
$$
\sup_{x\in X}\lK(x,T)=\sup_{x\in X} \uK(x,T)=h(T)
$$
\end{theorem_star}
Remark that this was already implied by Brudno's theorem, using the
variational principle: $h(T)=\sup\{h_\mu(T):\mu\text{ is
$T$-invariant}\}$. Nevertheless, our proof uses purely topological and
algorithmic arguments and no measure. In particular, it does not use the
variational principle, and can be thought as an alternative proof of it.

Many of these statements require that the dynamics and the invariant
measure are computable.

The first assumption can be easily checked on concrete systems if
the dynamics is given by a map which is effectively defined.

 The second is more delicate: it is well known that given a map on a metric
space, there can be a continuous (even infinite dimensional) space of
probability measures which are invariant for the map, and many of them will
be non computable. An important part of the theory of dynamical systems is
devoted to selecting measures which are particularly meaningful. From this
point of view, an important class of these measures is the class of SRB
invariant measures, which are measures being in some sense the ``physically
meaningful ones''(for a survey on this topic see \cite{Y}). It can be
proved (see \cite{GalHoyRoj07a} and \cite{GalHoyRoj07c} and their
references e.g.) that in several classes of dynamical systems where SRB
measures are proved to exist, these measures are also computable from our
formal point of view, hence providing several classes of nontrivial
concrete examples where our results can be applied.

\section{Preliminaries}\label{Prelim}

\subsection{Partial recursive functions}
\label{partial_rec}

The notion of algorithm working on integers has been formalized
independently by Markov, Church, Turing among others. Each
constructed model defines a set of partial (not defined everywhere)
integer functions which can be computed by some \emph{effective}
mechanical or algorithmic (w.r.t. the model) procedure. Later, it
has been proved that all this models define the same class of
functions, namely: the set of \emph{partial recursive functions}.
This fact supports a working hypothesis known as Church's Thesis,
which states that every (intuitively formalizable) algorithm is a
partial recursive function. It gives the connection between the
informal notion of algorithm and the formal definition of recursive
function.


Let us say then that a \defin{recursive function} is a function (on integers) that can be
computed in some \emph{effective} or \emph{algorithmic} way. For
formal definitions see for example, \cite{Rog87}. With this intuitive
description it is more or less clear that there exists an effective
procedure to enumerate the class of all partial recursive functions,
associating to each of them its \defin{G\"odel number}, which is the number of the program computing it. Hence there
exists a universal recursive function $\FI_u:\N \to \N$ satisfying
for all $e,n\in \N$,  $\FI_u(\uple{e,n})=\FI_e(n)$ where $e$ is the
g\"odel number of $\FI_e$ and $\uple{ , }:\N^2 \to \N$ is some
recursive bijection. In classical recursion theory, a set of natural
numbers is called \defin{recursively enumerable} (\defin{r.e} for
short) if it is the range of some partial recursive function. That
is if there exists an algorithm listing the set.  We denote by $E_e$
the r.e set associated to $\FI_e$. Namely
$E_e=rang(\FI_e)=\{\FI_u(\uple{e,n}):n\in \N\}$.

\subsection{Algorithms on finite objects}

Strictly speaking, algorithms only work on integers. However, when the
objects of some class have been identified with integers, it makes sense to
speak about algorithms acting on these objects.

\begin{definition}
A \defin{Numbered Set} $\O$ is a countable set together with a surjection
$\nu_\O:\N \to \O$ called the \defin{numbering}. We write $o_n$ for $\nu(n)$ and call $n$ the \defin{name}
of $o_n$.
\end{definition}

Of course, the potential of algorithms depends on the choice of the
numbering, since it determines to what extent an object can be
algorithmically recovered from its name. If the objects of some
collection can be characterized by a finite number of integers, then
the collection is a numbered set since its objects can be indexed
using standard recursive bijections from $\N^*$ to $\N$ and from
$\N^k$ to $\N$, which we will both denote $\uple{.}$.

\begin{examples}

\item $\Q$, with some standard numbering $\nu_{\Q}$ is a numbered set.

\item The set of partial recursive functions $\mathcal{R}=\{\FI_e:e\in \N\}$ is
a numbered set, G\"odel numbers being the names.

\item The collection $\{E_e=rang(\FI_e):e\in \N\}$ of all r.e subsets of $\N$ is a numbered set.

\end{examples}

\begin{definition}

Let $\O$ be a numbered set. To any recursive function $\FI:\N \to \N$ we associate
an \defin{algorithm} $\A_\FI:\N\to\O$ defined by
$\A_\FI(i)=o_{\FI(i)}$.
\end{definition}

Given a total algorithm $\A:\N\to\O$ (i.e $\A=\A_\FI$ for some total $\FI$), we say that the sequence of finite objects $(\A(n))_{n\in\N}$ is \defin{enumerated by} $\A$, or that $\A$ is an \defin{algorithmic enumeration} of this sequence.

\subsection{Computability over the reals}

\begin{definition}

Let $x$ be a real number. We say that:

\puce $x$ is \defin{lower semi-computable} if the set $E:=\{i\in \N :q_i<x\}$ is r.e,

\puce  $x$ is \defin{upper semi-computable} if the set $E:=\{i\in \N :q_i>x\}$ is r.e,

\puce $x$ is \defin{computable} if it is lower and upper semi-computable.

\end{definition}

Equivalently, a real number is computable if and only if there exists an algorithmic enumeration of a sequence of rational numbers converging exponentially fast to $x$. That is:

\begin{proposition} A real number $x$ is computable if and only if there exists an algorithm $\A:\N \to \Q$ such that   $|\A(i)-x|<2^{-i}$, for all $i$.
\end{proposition}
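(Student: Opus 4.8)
The plan is to establish the two implications separately, writing $q_n := \nu_\Q(n)$ for the standard numbering of the rationals and recalling that, by definition, $x$ is computable precisely when both $E^- := \{i : q_i < x\}$ and $E^+ := \{i : q_i > x\}$ are r.e.

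For the reverse implication, which is the easier one, suppose we are given an algorithm $\A : \N \to \Q$ with $|\A(i) - x| < 2^{-i}$ for every $i$. I would observe that $\A(n) - 2^{-n}$ and $\A(n) + 2^{-n}$ are, respectively, a strict lower and a strict upper rational bound for $x$, both converging to $x$ as $n \to \infty$. Hence $q_i < x$ holds if and only if there is some $n$ with $q_i < \A(n) - 2^{-n}$, and this condition is decidable since it only involves comparisons of rationals. Dovetailing over the pairs $(i,n)$ therefore enumerates $E^-$; the symmetric argument, using $\A(n) + 2^{-n}$, enumerates $E^+$. Thus $x$ is both lower and upper semi-computable, i.e. computable.

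For the forward implication, assume $x$ is computable and fix algorithmic enumerations of $E^-$ and $E^+$. The idea is, for a given precision index $i$, to run both enumerations in parallel (dovetailing) while maintaining the largest lower bound $l$ and the smallest upper bound $u$ seen so far among the enumerated rationals; I stop as soon as $u - l < 2^{-i+1}$ and output the midpoint $\A(i) := (l+u)/2$. Since $l < x < u$ by construction, the midpoint satisfies $|\A(i) - x| < (u - l)/2 < 2^{-i}$, as required. It remains only to see that this search always halts: because every rational strictly below $x$ eventually appears in the enumeration of $E^-$ and every rational strictly above $x$ eventually appears in that of $E^+$, and such rationals exist arbitrarily close to $x$ on both sides (even when $x$ itself is rational), a pair $(l,u)$ with $u - l < 2^{-i+1}$ is guaranteed to be found after finitely many steps.

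The main obstacle is precisely this termination guarantee in the forward direction: one must be sure that the dovetailed search for a sufficiently tight enclosing pair cannot run forever. This reduces to the density fact that $x$ admits rational strict under- and over-approximations of arbitrarily small error, which in turn is what makes the two one-sided semi-computability conditions combine into an effective fast-converging approximation. Everything else is a routine verification that the described procedures are genuinely algorithmic, involving only finite rational comparisons and standard dovetailing.
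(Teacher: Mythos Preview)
Your proof is correct. The paper itself states this proposition without proof (it is a standard characterization), so there is nothing to compare against; your argument supplies exactly the kind of verification the paper omits.

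Both directions are handled cleanly. In the reverse implication, the key observation that $q_i<x$ iff $q_i<\A(n)-2^{-n}$ for some $n$ is justified: one direction is immediate from $x>\A(n)-2^{-n}$, and for the other, if $q_i<x$ then choosing $n$ with $2^{-n+1}<x-q_i$ gives $\A(n)-2^{-n}>x-2^{-n+1}>q_i$. In the forward implication, your dovetailing search for a tight enclosing pair $(l,u)$ is the standard device, and your termination argument correctly handles the edge case where $x$ is itself rational (so that $x\notin E^-\cup E^+$) by noting that strict rational approximants still exist on both sides arbitrarily close to $x$.

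One small remark: in the reverse direction you do not actually need dovetailing, since $\A$ is total by hypothesis; a straightforward double loop over $(i,n)$ already enumerates $E^-$. This is purely cosmetic and does not affect correctness.
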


\begin{definition}Let $(x_n)_n$ be a sequence of computable reals. We say that the sequence is \defin{uniformly} computable or that $x_n$ is computable \defin{uniformly in n} if there exists an algorithm $\A:\N \to \Q$ such that for all $n$ and $i$ it holds $|\A(\uple{n,i})-x_n|<2^{-i}$.
\end{definition}

Uniform sequences of lower (upper) semi-computable reals are defined in the same way.

\subsection{Computable Metric Spaces}\label{CMS}

\begin{definition}A \defin{computable metric space} (CMS) is a triple $\X=(X,d,\S)$,
  where

\puce $(X,d)$ is a separable complete metric space.

\puce $\S=(s_i)_{i \in \N}$ is a numbered dense subset of $X$ (called  \defin{ideal points}).

\puce The real numbers $(d(s_i,s_j))_{\uple{i,j}\in\N}$ are all computable, uniformly in $\uple{i,j}$.

\end{definition}

We now recall some important examples of computable metric spaces:

\begin{examples}

\item the Cantor space $\mathcal(\Sigma^{\N},d,S)$ with $\Sigma$ a
finite alphabet and $d$ the usual distance. $S$ is the set of ultimately
$0$-stationary sequences.

\item $(\R^n,d_{\R^n},\Q^n)$ with the euclidean metric and the standard numbering of
$\Q^n$.

\item if $(X_1,d_1,S_1)$ and $(X_2,d_2,S_2)$ are two computable metric spaces, the distance $d((x_1,x_1),(y_1,y_2))=\max(d_1(x_1,y_1),d_2(x_2,y_2))$  on the product space $(X_1\times X_2,d,S_1\times S_1)$ makes it a computable metric space.
\end{examples}

For further examples we refer to \cite{Wei93}.

Let $(X,d,\S)$ be a computable metric space. The computable
structure of $X$ assures that the whole space can be ``reached''
using algorithmic means. Since ideal points (the finite objects of
$\S$) are dense, they can approximate any $x$ at any finite
precision. Then, $x$ itself can be identified to a sequence of ideal
points converging to $x$ in an effectively controlled way. Let us
say that a sequence of ideal points $(s_{i_n})_n$ is \defin{fast} if
$d(s_{i_n},s_{i_{n+1}})<2^{-n}$ for all $n$. As the space is
complete, a fast sequence has always a limit $x$, and
$d(s_{i_n},x)<2^{-(n-1)}$ for all $n$.

\begin{definition}[Computable points]
A point $x\in X$ is said to be \defin{computable} if there exists an algorithm $\A:\N \to \S$ which enumerates a fast sequence whose limit is $x$.
\end{definition}

As for real numbers we can give the notion of uniform sequence
\begin{definition}Let $(x_n)_n$ be a sequence of computable points. We say that the sequence is \defin{uniformly} computable or that $x_n$ is computable \defin{uniformly in n} if there exists an algorithm $\A:\N \to \S$ such that for all $n$, the sequence $(\A(\uple{n,i}))_i$ is fast and converges to $x_n$.
\end{definition}

The numbered set of ideal points $(s_i)_i$ induces the numbered set of \defin{ideal balls}
$\mathcal{B}:=\{B(s_i,q_j):s_i \in S, q_j \in\Q_{>0}\}$. We denote by $B_{\uple{i,j}}$ the ideal ball $B(s_i,q_j)$.

Computability can then be extended from the numbered set $\mathcal{B}$ to the space of
open subsets of $X$: such an open subset $U\subseteq X$ can be identified
to a collection of ideal balls whose union is $U$.

\begin{definition}[R.e open sets]We say that the set $U\subset X$ is \defin{r.e open}
if there is some r.e set $E\subset \N$ such that $U=\cup_{i\in E}B_i$.
\end{definition}

\begin{remark} Let $U$ be a r.e open set. It is easy to see that there is an algorithm to \defin{semi-decide} weather some ideal point belongs to $U$. That is, the algorithm will halt on input $i$ iff $s_i\in U$. This notion can be extended to any point $x$ in the following sense: The algorithm sequentially asks (from an external user) for finite approximations of $x$ at required precisions. If $x\in U$ the algorithm will eventually stop and answer ``yes'', if $x \notin U$ then the algorithm will run and ask forever. For formal definitions we refer to \cite{HoyRoj07}.
\end{remark}

\begin{definition}Let $(U_n)_n$ be a sequence of r.e open sets. We say that the sequence is \defin{uniformly r.e} or that $U_n$ is r.e open \defin{uniformly in n} if there exists an r.e set $E\subset \N$ such that for all $n$ it holds $U_n=\cup_{i\in E_n} B_{i}$, where $E_n=\{i:\uple{n,i}\in E\}$.
\end{definition}

\begin{examples}

\item If the sequence $(U_n)_n$ is uniformly r.e then the union $\cup_n U_n$ is a r.e open set.

\item The universal recursive function $\FI_u$ makes the collection  of all r.e open sets (denoted $\mathcal{U}$) a sequence uniformly r.e. Indeed, define $E:=\{\uple{e,\FI_u(\uple{e,n})}: e,n \in \N\}$. Then $\mathcal{U}=\{U_e:e\in N\}$ where $U_e=\cup_{i\in E_e}B_i$.

 \item The numbered set $\mathcal{U}$ is closed under finite unions and finite intersections. Furthermore, these operations are \emph{effective} in the following sense: there exists recursive functions $\FI^{\cup}$ and  $\FI^{\cap}$ such that for all $i,j\in \N$, $U_i\cup U_j=U_{\FI^{\cup}(\uple{i,j})}$ and the same holds for $\FI^{\cap}$. Equivalently: $U_i\cup U_j$ is r.e open uniformly in $\uple{i,j}$ . See \cite{HoyRoj07}.

\end{examples}

\begin{definition}[Constructive $G_{\delta}$-sets] We say that the set $D\subset X$ is a \defin{constructive $\boldsymbol{G_\delta}$-set} if it is the intersection of a sequence of uniformly r.e open sets.

\end{definition}

Let $(X,S_X,d_X)$ and $(Y,S_Y,d_Y)$ be computable metric spaces with $\mathcal{U}^X$ and $\mathcal{U}^Y$ the corresponding numbered sets of r.e open sets.

\begin{definition}[Computable Functions]\label{functions}
A function $T: X \to Y$ is said to be \defin{computable} if $T^{-1}(B_n)$ is r.e open uniformly in $n$.
\end{definition}

\begin{remark}
 We remark that this definition implies that the preimage of a uniform sequence of r.e. open sets is a uniform sequence of r.e. open sets. This could be an alternative definition of computable function.
\end{remark}

It follows that computable functions are continuous. Since we will
work with functions which are not necessarily continuous everywhere,
we shall consider functions which are computable on some subset of
$X$. More precisely, a function $T$ is said to be \defin{computable
on D} ($D\subset X$) if there is a uniform  sequence $(U^X_{n})_n$ of r.e open subsets of $X$ such it holds $T^{-1}(B_n)\cap D=U^X_{n}\cap D$  for the uniform sequence of ideal  balls $B_n$. $D$ is called the
\defin{domain of computability} of $T$.

\begin{remarks}

\item Since ideal balls generate the topology, a function is computable iff $T^{-1}(B^Y_n)$ is r.e open uniformly in $n$ (or the intersection of $D$ with a uniformly r.e open set).

\item If $T$ is computable then the images of ideal points can be uniformly computed, that is: $T(s_i^X)$ is a computable point, uniformly in $i$.

\item More generally, if $T$ is computable then there exists an algorithm which computes the image $T(x)$ of any $x$ in the following sense: the user enters some rational $\epsilon$ to the algorithm which, after asking finitely many times the user for finite approximations of $x$, halts outputting a finite approximation of $T(x)$ up to $\epsilon$.

\item The distance function $d:X\times X \to \R$ is a computable function.
\end{remarks}

\subsection{Computable Probability Spaces (CPS)}\label{measures_section}

When $X$ is a computable metric space, the space of probability measures
over $X$, denoted by $\M(X)$, can be endowed with a structure of computable
metric space (this will be defined below, for more details, see \cite{Gac05, HoyRoj07}). Then a computable measure can be
defined as a computable point of $\M(X)$.

Some prerequisites from measure theory: We say that $\mu_n$ converge weakly to $\mu$ and write $\mu_n\to \mu$ if
\begin{equation}
\mu_n\to\mu \mbox{ iff } \mu_n f \to \mu f
\mbox{ for all real continuous bounded } f
\end{equation}
where $\mu f$ stands for $\int fd\mu$. Let us recall the Portmanteau
theorem. We say that a Borel set $A$ is
\defin{$\boldsymbol{\mu}$-continuous} if $\mu(\partial A)=0$, where
$\partial A=\overline{A}\cap\overline{X\setminus A}$ is the boundary of
$A$.

\begin{theorem}[Portmanteau theorem]\label{theorem_portmanteau}
Let $\mu_n,\mu$ be Borel probability measures on a separable metric space
$(X,d)$. The following are equivalent:
\begin{enumerate}
\item $\mu_n$ converges weakly to $\mu$,
\item $\limsup_n \mu_n(F)\leq \mu(F)$ for all closed sets $F$,
\item $\liminf_n \mu_n(G)\geq \mu(G)$ for all open sets $G$,
\item $\lim_n \mu_n(A)=\mu(A)$ for all $\mu$-continuity sets $A$.
\end{enumerate}
\end{theorem}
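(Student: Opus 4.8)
The plan is to prove the four conditions equivalent through the cycle $(1)\Rightarrow(2)\Leftrightarrow(3)$ and $(2)\wedge(3)\Rightarrow(4)\Rightarrow(1)$, which is the classical route and defers proving weak convergence directly to the very last step. Only the implication $(4)\Rightarrow(1)$ is genuinely substantial; the others are short approximation or complementation arguments.

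For $(1)\Rightarrow(2)$, given a closed set $F$ I would approximate its indicator from above by the continuous bounded functions $f_k(x)=\max(0,1-k\,d(x,F))$. Since $F$ is closed, $d(x,F)>0$ off $F$, so $f_k\downarrow\cf_F$ pointwise while $\cf_F\le f_k\le 1$. Weak convergence gives $\limsup_n\mu_n(F)\le\limsup_n\mu_n f_k=\mu f_k$ for every $k$, and dominated convergence lets $k\to\infty$ to yield $\limsup_n\mu_n(F)\le\mu(F)$. The equivalence $(2)\Leftrightarrow(3)$ is pure complementation: as the measures are probabilities, an open $G$ has closed complement $F=X\setminus G$ with $\mu_n(G)=1-\mu_n(F)$, so $\liminf_n\mu_n(G)=1-\limsup_n\mu_n(F)$, and the inequality for closed sets transfers verbatim to open sets and back.

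For $(2)\wedge(3)\Rightarrow(4)$, I would squeeze a $\mu$-continuity set $A$ between its interior and its closure. Condition $(3)$ applied to $\Int A$ and $(2)$ applied to $\overline{A}$ give
\[
\mu(\Int A)\le\liminf_n\mu_n(A)\le\limsup_n\mu_n(A)\le\mu(\overline{A}),
\]
and $\mu(\partial A)=0$ forces $\mu(\Int A)=\mu(\overline{A})=\mu(A)$, so the limit exists and equals $\mu(A)$.

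The crux is $(4)\Rightarrow(1)$. Given a continuous bounded $f$, after an affine change I may assume $0\le f\le 1$ and use the layer-cake identity $\mu f=\int_0^1\mu(\{f>t\})\,dt$, valid simultaneously for every $\mu_n$. For each $t$ the set $\{f>t\}$ is open with boundary contained in $\{f=t\}$ by continuity of $f$, hence it is a $\mu$-continuity set whenever $\mu(\{f=t\})=0$. The level sets $\{f=t\}$ are pairwise disjoint, so at most countably many of them carry positive $\mu$-mass; therefore $\{f>t\}$ is a continuity set for Lebesgue-almost every $t\in[0,1]$, and $(4)$ gives $\mu_n(\{f>t\})\to\mu(\{f>t\})$ there. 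As the integrands are bounded by $1$, the bounded convergence theorem upgrades this to $\int_0^1\mu_n(\{f>t\})\,dt\to\int_0^1\mu(\{f>t\})\,dt$, i.e. $\mu_n f\to\mu f$, which is $(1)$. I expect the main obstacle to be precisely the bookkeeping in this last step: justifying the layer-cake representation uniformly in $n$ and isolating the Lebesgue-null set of bad levels — the disjointness of the level sets $\{f=t\}$ is the one observation that makes the exceptional set countable and the whole argument go through.
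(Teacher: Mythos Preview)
Your proof is correct and follows the standard route; there is nothing to compare against, because the paper does not prove the Portmanteau theorem. It is stated as a classical prerequisite from measure theory (with the usual reference being Billingsley) and only \emph{used}, via Proposition~\ref{prop_portmanteau}, later in the proof of Theorem~\ref{birkhoff_theorem}.

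For the record, each of your four implications is sound. In $(1)\Rightarrow(2)$ the functions $f_k(x)=\max(0,1-k\,d(x,F))$ are indeed continuous and bounded with $\cf_F\le f_k\le 1$ and $f_k\downarrow\cf_F$ since $F$ is closed; the passage to the limit in $k$ is dominated convergence. The complementation $(2)\Leftrightarrow(3)$ uses only that all measures are probabilities. The squeeze $(2)\wedge(3)\Rightarrow(4)$ is clean. In $(4)\Rightarrow(1)$ your reduction to $0\le f\le 1$ by an affine change is harmless (constant $f$ being trivial), the layer-cake identity $\nu f=\int_0^1\nu(\{f>t\})\,dt$ holds for every finite measure $\nu$ by Fubini applied to $\cf_{\{(x,t):f(x)>t\}}$, and the key observation that only countably many levels $\{f=t\}$ carry positive $\mu$-mass is exactly what justifies bounded convergence on $[0,1]$.
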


This theorem easily implies the following: when $(X,d)$ is a separable metric space, weak convergence can be proved using the following criterion:
\begin{proposition}\label{prop_portmanteau}
Let $\A$ be a countable basis of the topology which is closed under the
formation of finite unions. If $\mu_n(A)\to\mu(A)$ for every $A\in\A$, then
$\mu_n$ converge weakly to $\mu$.
\end{proposition}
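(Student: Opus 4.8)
The plan is to verify condition (3) of the Portmanteau theorem (Theorem \ref{theorem_portmanteau}), namely that $\liminf_n \mu_n(G) \geq \mu(G)$ for every open set $G$. Since $(X,d)$ is separable this characterization of weak convergence is available, and it is the one adapted to a hypothesis phrased in terms of a basis, because a basis describes open sets through unions rather than complements.

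First I would represent an arbitrary open set $G$ as an increasing union of basis elements. Because $\A$ is a countable basis, $G$ equals the union of all $A\in\A$ with $A\subseteq G$, which is a countable union; and because $\A$ is closed under finite unions I can replace an enumeration $A^{(1)},A^{(2)},\ldots$ of these sets by the partial unions $B_k = A^{(1)}\cup\cdots\cup A^{(k)}$, which again lie in $\A$ and satisfy $B_1\subseteq B_2\subseteq\cdots$ with $\bigcup_k B_k = G$.

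Next, for each fixed $k$ the inclusion $B_k\subseteq G$ gives $\mu_n(G)\geq\mu_n(B_k)$ for every $n$, whence $\liminf_n \mu_n(G)\geq\liminf_n \mu_n(B_k)=\mu(B_k)$, the last equality holding because $B_k\in\A$ and the hypothesis provides $\mu_n(B_k)\to\mu(B_k)$. Letting $k\to\infty$ and using continuity of $\mu$ from below along the increasing sequence $(B_k)_k$ yields $\liminf_n \mu_n(G)\geq\sup_k \mu(B_k)=\mu(G)$, which is precisely condition (3). The Portmanteau theorem then delivers $\mu_n\to\mu$ weakly.

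There is essentially no technically hard step here; the only point requiring care is that the hypothesis $\mu_n(A)\to\mu(A)$ is assumed only for $A\in\A$, so the argument must bridge from basis sets to arbitrary open sets. The two ingredients that accomplish this are the closure of $\A$ under finite unions (which furnishes the monotone exhausting sequence $(B_k)_k$) and continuity of the measure from below; without closure under finite unions one could not guarantee a monotone sequence inside $\A$, and the passage to the supremum would fail.
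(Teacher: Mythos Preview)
Your proof is correct and is precisely the argument the paper has in mind: the paper does not spell out a proof but simply states that the proposition ``easily'' follows from the Portmanteau theorem, and your verification of condition (3) via an increasing exhaustion by elements of $\A$ is the natural way to make this explicit.
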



Let us introduce on $\M(X)$ the structure of a computable metric space. 
Let us endow $\M(X)$ with the weak topology, which is the topology of weak
convergence. As $X$ is separable and complete, so is $\mathcal{M}(X)$. Let
$D\subset \mathcal{M}(X)$ be the set of those probability measures that are
concentrated in finitely many points of $S$ and assign rational values to
them. It can be shown that this is a dense subset (\cite{Bil68}).

We consider the Prokhorov metric $\rho$ on $\mathcal{M}(X)$ defined by:
\begin{equation*}  \label{prokhorov}
\rho(\mu,\nu):=\inf \{\epsilon \in \mathbb{R}^+ :
\mu(A)\leq\nu(A^{\epsilon})+\epsilon \mbox{ for every Borel set }A\}.
\end{equation*}
where $A^{\epsilon}=\{x:d(x,A)< \epsilon\}$.

This metric induces the weak topology on $\mathcal{M}(X)$. Furthermore, it
can be shown that the triple $(\mathcal{M}(X),D,\rho)$ is a computable
metric space (see \cite{Gac05}, \cite{HoyRoj07}).

\begin{definition}
\label{compmeas}A measure $\mu $ is computable if there is an algorithmic
enumeration of a fast sequence of ideal measures $(\mu _{n})_{n\in \mathbb{N}%
}\subset D$ converging to $\mu $ in the Prokhorov metric and hence, in the
weak topology.
\end{definition}

The following theorem gives a characterization for the computability of measures in terms of the computability of the measure of sets (for a proof see \cite{HoyRoj07}):

\begin{theorem}\label{mu_computable}
A measure $\mu\in \M(X)$ is computable if and only if the measure $\mu(B_{i_1}\cup\ldots\cup B_{i_k})$ of finite unions of ideal open balls is lower-semi-computable uniformly in $\uple{i_1,\ldots,i_k}$.
\end{theorem}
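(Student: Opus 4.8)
The plan is to prove both implications, the crux being one simple but powerful observation: since $\mu$ is a \emph{probability} measure, its total mass $1$ is known, so lower semi-computability of $\mu(G)$ for open $G$ automatically yields \emph{upper} semi-computability of $\mu(F)$ for closed $F$ via $\mu(F)=1-\mu(\comp{F})$. Combined with the fact that the distance map $d(\cdot,s_i)$ is computable (so that $\{x:d(x,s_i)>q\}$ is r.e.\ open, with lower semi-computable measure), this lets us squeeze the measure of $\mu$-continuity sets from both sides and compute them exactly. Everything below is carried out uniformly in the relevant indices.

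For the forward direction, suppose $\mu$ is computable and fix a fast sequence $(\mu_n)\subset D$ with $\rho(\mu,\mu_n)<2^{-n}$; let $G=B_{i_1}\cup\ldots\cup B_{i_k}$ with $B_{i_j}=B(s_{i_j},q_j)$. For rational $\delta>0$ set $G_\delta=\bigcup_j B(s_{i_j},q_j-\delta)$; a triangle-inequality check gives $(G_\delta)^{\delta}\subseteq G$ and $(G_{2\delta})^{\delta}\subseteq G_\delta$. Feeding $A=G_\delta$ into the defining inequality of $\rho$ (and its symmetric counterpart) with $2^{-n}\le\delta$ yields $\mu_n(G_\delta)\le\mu(G)+2^{-n}$, while applying it to $G_{2\delta}$ gives $\mu_n(G_\delta)\ge\mu(G_{2\delta})-2^{-n}$. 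Since each $\mu_n$ is an explicitly given ideal measure and membership of an atom in an open ball is semidecidable (distances between ideal points being computable), the numbers $\mu_n(G_\delta)$ are lower semi-computable, uniformly. As $\delta\to0$ and $n\to\infty$ we have $\mu(G_{2\delta})\to\mu(G)$ by continuity from below, so $\mu(G)=\sup\{\mu_n(G_\delta)-2^{-n}:2^{-n}\le\delta\}$ is lower semi-computable, uniformly in $\uple{i_1,\ldots,i_k}$.

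For the converse, assume the stated lower semi-computability. First I would show that for \emph{good} radii $q$ — those with $\mu(\{x:d(x,s_i)=q\})=0$ — the value $\mu(B(s_i,q))$ is in fact computable: it is lower semi-computable as an open ball, and upper semi-computable since $\mu(B(s_i,q))=1-\mu(\{x:d(x,s_i)>q\})$ with the last open set's measure lower semi-computable. Good radii are co-countable, hence dense, and can be located effectively by running the lower bound for $\{d<q\}$ and the upper bound for $\{d\le q\}$ in parallel until they nearly meet. The same reasoning shows that any finite union $V$ of good balls is a $\mu$-continuity set with computable measure. Given a target precision $\epsilon$, I would use density of $\S$ together with lower semi-computability to find good balls $B_1,\ldots,B_m$ of radius $<\epsilon$ whose union $U$ satisfies $\mu(U)>1-\epsilon$, then pass to the disjoint pieces $A_l=B_l\setminus(B_1\cup\ldots\cup B_{l-1})$, each of diameter $<2\epsilon$ and of computable measure $\mu(A_l)=\mu(V_l)-\mu(V_{l-1})$, where $V_l=B_1\cup\ldots\cup B_l$. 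Setting $\nu=\sum_l a_l\,\delta_{s_{i_l}}$ for rationals $a_l$ close to $\mu(A_l)$ (plus a negligible leftover mass to reach total $1$), a routine verification of the two Prokhorov inequalities — using that every point of $A_l$ lies within $\epsilon$ of the center $s_{i_l}$ and that $\mu(\comp{U})<\epsilon$ — gives $\rho(\mu,\nu)=O(\epsilon)$. Running this for $\epsilon=2^{-n}$ produces the required fast sequence in $D$.

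The main obstacle is this converse direction, and specifically the passage from the purely one-sided (lower) information we are handed to the two-sided control needed to build ideal measures that are Prokhorov-close. The mechanism that makes it work is the continuity-set trick: exploiting total mass $1$ and the computability of $d(\cdot,s_i)$ to bound measures of closed balls from above, then selecting radii at which inner and outer measures coincide, so that the masses of the partition pieces become genuinely computable rather than merely semi-computable.
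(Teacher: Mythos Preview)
The paper does not actually prove this theorem: immediately before the statement it says ``for a proof see \cite{HoyRoj07}'', so there is no in-paper argument to compare yours against. Your proposal is, however, a correct and complete sketch of the result, and it is worth pointing out that the engine of your converse direction --- effectively locating radii $r$ with $\mu(\{d(\cdot,s_i)=r\})=0$ by squeezing the annulus mass from above via $1-\mu(\{d<a\})-\mu(\{d>b\})$ and running a nested-interval search --- is precisely the mechanism the paper uses \emph{later} (in the theorem producing the basis of almost decidable balls). There the paper invokes the already-proved Theorem~\ref{mu_computable} to get upper semi-computability of annulus masses; you are observing, correctly, that the hypothesis of the converse direction (lower semi-computability on finite unions of ideal balls, hence on all r.e.\ open sets by taking suprema) already suffices for that step, so no circularity arises.

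Two small points worth making explicit in a write-up. First, your ``good balls'' $B(s_i,r)$ have computable but generally irrational radii, so they are not ideal balls; you should say once that they are nonetheless r.e.\ open (as increasing unions $\bigcup_{q<r}B(s_i,q)$), and likewise that $\bigcap_l\{d(\cdot,s_{i_l})>r_l\}$ is r.e.\ open, which is what makes $\mu(V_l)$ genuinely computable and hence $\mu(A_l)=\mu(V_l)-\mu(V_{l-1})$ computable. Second, in the Prokhorov verification the center $s_{i_l}$ need not lie in the piece $A_l$, but your argument only uses that every point of $A_l$ is within $\epsilon$ of $s_{i_l}$, which is enough; spelling this out (together with the bound on the leftover mass) gives $\rho(\mu,\nu)\leq 3\epsilon$ cleanly.
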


\begin{definition}
A \defin{Computable Probability Space (CPS)} is a pair $(\X,\mu)$
where $\X$ is a computable metric space and $\mu$ is a computable
Borel probability measure on $X$.
\end{definition}

\begin{definition}Let $(\X,\mu)$ and $(\Y,\nu)$ be two \CPSs. A \defin{morphism}  from $(\X,\mu)$ to $(\Y,\nu)$ is a measure-preserving function  $F:X\to Y$ which is computable on a constructive $G_{\delta}$-set of $\mu$-measure one.
\end{definition}

We recall that $F$ is measure-preserving if $\nu(A)=\mu(F^{-1}(A))$ for every Borel set $A$. Computable probability structures can be easily
transferred:

\begin{proposition}\label{transfer}
Let $(\X,\mu)$ be a computable probability space, $\Y$ be a computable
metric space and $F:X\to Y$ a function which is computable on a
constructive $G_\delta$-set of $\mu$-measure one. Then the induced measure
$\mu_F$ on $Y$ defined by $\mu_F(A)=\mu(F^{-1}(A))$ is computable and $F$
is a morphism of computable probability spaces.
\end{proposition}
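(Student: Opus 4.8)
The plan is to prove Proposition \ref{transfer} by reducing the computability of the induced measure $\mu_F$ to the criterion provided by Theorem \ref{mu_computable}, which says it suffices to show that $\mu_F(B_{i_1}\cup\ldots\cup B_{i_k})$ is lower-semi-computable uniformly in $\uple{i_1,\ldots,i_k}$. By definition of the induced measure, $\mu_F(B_{i_1}\cup\ldots\cup B_{i_k})=\mu\big(F^{-1}(B_{i_1}\cup\ldots\cup B_{i_k})\big)=\mu\big(F^{-1}(B_{i_1})\cup\ldots\cup F^{-1}(B_{i_k})\big)$, so the whole problem is to understand the $\mu$-measure of finite unions of preimages of ideal balls under $F$.

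First I would exploit the hypothesis that $F$ is computable on a constructive $G_\delta$-set $D$ with $\mu(D)=1$. By the definition of computability on $D$, for each $n$ there is a uniform sequence of r.e.\ open sets $(U^X_n)_n$ with $F^{-1}(B_n)\cap D=U^X_n\cap D$. Since unions are effective on r.e.\ open sets (the operation $\FI^\cup$ from the Examples after the definition of constructive $G_\delta$-sets), the set $U^X_{i_1}\cup\ldots\cup U^X_{i_k}$ is again r.e.\ open, uniformly in $\uple{i_1,\ldots,i_k}$, and agrees with $F^{-1}(B_{i_1}\cup\ldots\cup B_{i_k})$ on $D$. Because $\mu(D)=1$, intersecting with $D$ does not change $\mu$-measure, so $\mu_F(B_{i_1}\cup\ldots\cup B_{i_k})=\mu(U^X_{i_1}\cup\ldots\cup U^X_{i_k})$. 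Now $\mu$ is computable on $X$, so by Theorem \ref{mu_computable} the $\mu$-measure of a finite union of ideal balls is lower-semi-computable; the final step is to extend this from finite unions of ideal balls to arbitrary r.e.\ open sets, using that an r.e.\ open set is an effective increasing union of finite unions of ideal balls and that $\mu$ of such a monotone union is the supremum (hence lower-semi-computable) of the measures of the finite stages. This gives lower-semi-computability of $\mu_F$ on finite unions of ideal balls, uniformly in the indices, which is exactly what Theorem \ref{mu_computable} requires. Once $\mu_F$ is shown computable, the claim that $F$ is a morphism of computable probability spaces is immediate from the definitions, since $F$ is measure-preserving by construction of $\mu_F$ and computable on a constructive $G_\delta$-set of full measure by hypothesis.

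The main obstacle I anticipate is the careful handling of the set $D$ of computability: one must check that replacing $F^{-1}(B_{i_1}\cup\ldots\cup B_{i_k})$ by the r.e.\ open set $U^X_{i_1}\cup\ldots\cup U^X_{i_k}$ is legitimate for the purpose of computing $\mu$-measure, which relies precisely on $\mu(D)=1$ so that the symmetric difference between the two sets is contained in the $\mu$-null complement of $D$. A secondary technical point is ensuring all the effectivity is genuinely uniform in $\uple{i_1,\ldots,i_k}$: the passage through $\FI^\cup$, the identification with $U^X_n$, and the application of Theorem \ref{mu_computable} must each preserve uniformity, but since every ingredient is stated uniformly in the excerpt this should be routine bookkeeping rather than a real difficulty.
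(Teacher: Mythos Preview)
The paper states Proposition~\ref{transfer} without proof, introducing it only with the remark that ``computable probability structures can be easily transferred.'' Your argument is correct and is exactly the natural way to fill in the details: reduce to Theorem~\ref{mu_computable}, use computability of $F$ on $D$ to replace $F^{-1}(B_{i_1}\cup\ldots\cup B_{i_k})$ by a uniformly r.e.\ open set modulo the $\mu$-null complement of $D$, and then use that the $\mu$-measure of a uniformly r.e.\ open set is uniformly lower semi-computable.
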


\subsection{Algorithmic randomness}
Now we consider a generalization of Martin-L\"of tests to computable
probability spaces.
 Let $(\X,\mu)$ be a computable probability space.

\begin{definition}
A \defin{Martin-L\"of $\boldsymbol{\mu}$-Test} is a sequence $(U_n)_{n \in \N}$ of uniformly r.e open sets which satisfy $\mu(U_n)<2^{-n}$ for all $n$. Any subset of $\bigcap_nU_n$ is called an \defin{effective $\boldsymbol\mu$-null set}.
\end{definition}

\begin{definition}
A point $x\in X$ is called \defin{$\boldsymbol\mu$-random} if $x$ is
contained in no effective $\mu$-null set. The set of $\mu$-random
points is denoted $R_\mu$.
\end{definition}

Note that $\mu(R_\mu)=1$. The following is the generalization for metric spaces of a classical result in Cantor space due to Martin-L\"of. It says that the set of non-random points is not only a null set but an effective null set. For a proof see \cite{HoyRoj07}.

\begin{theorem}
The union of all effective $\mu$-null sets, denoted by $\mathcal{N}_{\mu}$, is again an effective $\mu$-null set.
\end{theorem}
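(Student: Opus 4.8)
The plan is to construct a single Martin-L\"of $\mu$-test $(V_m)_m$ that is \emph{universal}, meaning that $\bigcap_n U_n\subseteq\bigcap_m V_m$ for every $\mu$-test $(U_n)_n$. This suffices: by definition every effective $\mu$-null set is contained in $\bigcap_n U_n$ for some test, so $\Nu_\mu=\bigcup_{(U_n)\text{ test}}\bigcap_n U_n\subseteq\bigcap_m V_m$; conversely $\bigcap_m V_m$ is itself the intersection of a test, hence an effective $\mu$-null set, so it is contained in $\Nu_\mu$. Thus $\Nu_\mu=\bigcap_m V_m$, and in particular $\Nu_\mu$ is an effective $\mu$-null set.

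First I would enumerate all \emph{candidate} tests. Recall that the universal function $\FI_u$ makes the collection $\U$ of all r.e.\ open sets a uniformly r.e.\ sequence; pairing the indices gives a doubly-indexed family $(U_n^e)_{e,n}$ that is r.e.\ open uniformly in $\uple{e,n}$ and realizes \emph{every} uniformly r.e.\ sequence of open sets as $(U_n^e)_n$ for some $e$. In particular every genuine $\mu$-test occurs among the $(U_n^e)_n$, since a test is by definition such a sequence. The difficulty is that one cannot decide which $e$ give genuine tests: by Theorem \ref{mu_computable} the measure of an r.e.\ open set is only lower-semicomputable, so the budget condition $\mu(U_n^e)<2^{-n}$ is undecidable.

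To get around this I would \emph{truncate} each candidate. Fix $e,n$ and enumerate $U_n^e$ as an increasing union of finite unions of ideal balls $F_1\subseteq F_2\subseteq\cdots$. By Theorem \ref{mu_computable} the numbers $\mu(F_k)$ are lower-semicomputable uniformly, so the event ``some lower approximation to $\mu(F_k)$ exceeds $2^{-n}$'' is r.e. Define $\tilde U_n^e$ by committing the balls of $F_1,F_2,\dots$ in turn and halting permanently as soon as this event fires. This produces a sequence $(\tilde U_n^e)_n$ that is r.e.\ open uniformly in $\uple{e,n}$. The decisive point is that when $(U_n^e)_n$ is a genuine test we have $\mu(U_n^e)<2^{-n}$, hence $\mu(F_k)\le\mu(U_n^e)<2^{-n}$ for every $k$, no lower approximation ever reaches $2^{-n}$, the stopping rule never fires, and $\tilde U_n^e=U_n^e$: the truncation is harmless exactly where it must be. I would then assemble the universal test by an index shift that absorbs the sum over $e$, setting $V_m:=\bigcup_{e\in\N}\tilde U_{m+e+2}^e$, so that $(V_m)_m$ is uniformly r.e.\ with $\mu(V_m)\le\sum_e 2^{-(m+e+2)}=2^{-(m+1)}<2^{-m}$, while for any genuine test occurring as $(U_n^{e_0})_n$ one gets $\bigcap_m V_m\supseteq\bigcap_m\tilde U_{m+e_0+2}^{e_0}=\bigcap_{n\ge e_0+2}U_n^{e_0}\supseteq\bigcap_n U_n^{e_0}$, which is the required universality.

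The step I expect to be the main obstacle is enforcing the quantitative bound $\mu(\tilde U_n^e)\le 2^{-n}$ by an r.e.\ construction, precisely because $\mu$ on open sets is only lower-semicomputable: the stopping rule as stated may commit the single ball that tips the running union over the budget, and a single ideal ball carries no a priori small measure. I would control this by working with a basis of ideal balls whose measures are genuinely computable — for each center all but countably many radii have $\mu$-null boundary, and such radii can be selected effectively — so that comparison of $\mu(F_k)$ with the rational threshold $2^{-n}$ becomes decidable up to arbitrary precision and the committed union can be kept strictly under budget while still equalling $U_n^e$ on genuine tests. Everything else is bookkeeping with the uniform closure properties of r.e.\ open sets recorded above.
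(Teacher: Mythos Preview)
The paper does not actually prove this theorem: it refers the reader to \cite{HoyRoj07} and proceeds. So there is no in-paper argument to compare against; I can only assess your proposal on its own terms.

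Your overall architecture is the standard one and is correct: enumerate all candidate uniformly r.e.\ sequences $(U_n^e)_n$, truncate each to a sequence $(\tilde U_n^e)_n$ that \emph{always} satisfies a measure bound and agrees with $(U_n^e)_n$ whenever the latter was already a genuine test, and then diagonalize via $V_m=\bigcup_e \tilde U^e_{m+e+2}$. The index shift correctly converts the termwise bound $\mu(\tilde U^e_n)\leq 2^{-n}$ into the strict test bound $\mu(V_m)\leq 2^{-(m+1)}<2^{-m}$, and the universality argument is fine.

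You are also right to single out the truncation step as the genuine difficulty, and right about the reason: with only lower semi-computability of $\mu$ on r.e.\ open sets, the na\"\i ve ``stop when a lower approximation exceeds $2^{-n}$'' rule may already have committed a union of measure arbitrarily larger than $2^{-n}$ before the alarm fires. Your proposed fix---replace the ideal-ball basis by a uniformly computable family of balls with $\mu$-null boundary, so that finite unions have \emph{computable} measure---is exactly what is done in \cite{HoyRoj07}, and the paper itself establishes later (independently of this theorem, so there is no circularity) that such a basis exists and is effectively equivalent to the ideal one. With computable $\mu(F_k)$ one commits $F_k$ only after confirming $\mu(F_k)<2^{-n}$ to within a shrinking tolerance; for a genuine test the strict gap $2^{-n}-\mu(U_n^e)>0$ guarantees that all sufficiently large $F_k$ are eventually committed, whence $\tilde U_n^e=U_n^e$.

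One small caution: even with $\mu(F_k)$ computable, the comparison ``$\mu(F_k)<2^{-n}$'' is not literally decidable (equality is possible). The usual workaround---commit $F_k$ only when a $2^{-n-k}$-approximation to $\mu(F_k)$ falls below $2^{-n}-2^{-n-k}$---gives $\mu(\tilde U_n^e)\leq 2^{-n}$ always and $\tilde U_n^e=U_n^e$ on genuine tests, which is all you need after the index shift. This is a detail, not a gap; your sketch is sound.
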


Thus, there is a single Martin-L\"of test (often called \emph{universal}) which tests non-randomness, and $R_\mu=\mathcal{N}_{\mu}^c$.

We will need the following results, also taken from  \cite{HoyRoj07}.

\begin{lemma}\label{included}
Every $\mu$-random point is in every r.e open set of full measure.
\end{lemma}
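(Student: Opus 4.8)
The plan is to show that the complement $\comp{U}$ of such a set $U$ is an effective $\mu$-null set; since a $\mu$-random point lies in no effective $\mu$-null set, this immediately gives $x\in U$. Concretely, I would exhibit a Martin-L\"of $\mu$-test $(V_n)_n$ with $\comp{U}\subseteq\bigcap_n V_n$.

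To build the test, first write $U=\bigcup_{i\in E}B_i$ for an r.e.\ set $E$, and let $W_k:=B_{i_0}\cup\cdots\cup B_{i_k}$ run along an enumeration of $E$, so $W_k\nearrow U$ and $\mu(W_k)\nearrow\mu(U)=1$. By Theorem \ref{mu_computable} the reals $\mu(W_k)$ are lower semi-computable uniformly in $k$; dovetailing the certified rational lower bounds, I can therefore compute (uniformly in $n$) an index $k(n)$ with $\mu(W_{k(n)})>1-2^{-n}$. Writing $W_{k(n)}=\bigcup_j B(s_{a_j},q_{b_j})$, the closed set $\comp{W_{k(n)}}$ has measure $<2^{-n}$ and contains $\comp{U}$; it remains to replace this closed set by a small \emph{open} one.

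The hard part is exactly this passage from the closed complement to an r.e.\ open neighbourhood of provably small measure: measures of r.e.\ open sets are only lower semi-computable, so one cannot directly certify that an open set is small. The trick I would use is to shrink radii. For a rational $\delta>0$ with $q_{b_j}-\delta>0$, set
\[
V_n^\delta:=\bigcap_j\{x:\ d(x,s_{a_j})>q_{b_j}-\delta\},
\]
which is r.e.\ open uniformly in its parameters, since $d$ is computable and $\mathcal{U}$ is effectively closed under finite intersections. Because $q_{b_j}-\delta<q_{b_j}$ one gets $\comp{W_{k(n)}}\subseteq V_n^\delta$, hence $\comp{U}\subseteq V_n^\delta$. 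Moreover $\comp{(V_n^\delta)}\supseteq\bigcup_j B(s_{a_j},q_{b_j}-\delta)$, so
\[
\mu(V_n^\delta)\le 1-\mu\Big(\textstyle\bigcup_j B(s_{a_j},q_{b_j}-\delta)\Big),
\]
where the subtracted quantity is a measure of a finite union of ideal balls, hence lower semi-computable, and tends to $\mu(W_{k(n)})>1-2^{-n}$ as $\delta\to 0$. Thus I can search over rationals $\delta\searrow 0$ until $\mu(\bigcup_j B(s_{a_j},q_{b_j}-\delta))$ is certified to exceed $1-2^{-n}$; this search halts and yields a computable $\delta(n)$, and I set $V_n:=V_n^{\delta(n)}$.

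Finally I would verify that $(V_n)_n$ is uniformly r.e.\ open (all the above is uniform in $n$), that $\mu(V_n)<2^{-n}$ for every $n$, and that $\comp{U}\subseteq V_n$ for every $n$, hence $\comp{U}\subseteq\bigcap_n V_n$. Therefore $(V_n)_n$ is a Martin-L\"of $\mu$-test and $\comp{U}$ is an effective $\mu$-null set, so every $\mu$-random point $x$ satisfies $x\notin\comp{U}$, i.e.\ $x\in U$.
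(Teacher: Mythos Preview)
Your argument is correct. The paper does not actually supply a proof of Lemma \ref{included}; it is simply imported from \cite{HoyRoj07}, so there is no in-paper proof to compare against. Your construction---approximating $U$ from inside by a finite union $W_{k(n)}$ of ideal balls of measure $>1-2^{-n}$, then enlarging the closed complement $\comp{W_{k(n)}}$ into an r.e.\ open superset $V_n$ by shrinking the radii just enough that the certified lower bound on the measure of the shrunk union still exceeds $1-2^{-n}$---is the standard way to turn a co-r.e.\ closed null set into an effective null set in a general computable metric space. All the ingredients you invoke (Theorem \ref{mu_computable} for lower semi-computability of measures of finite unions of ideal balls, computability of $d$ to get that $\{x:d(x,s)>r\}$ is r.e.\ open, effective closure of $\mathcal{U}$ under finite intersection) are available in the paper, and the uniformity in $n$ is clear since every search you perform halts.
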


\begin{proposition}[Morphisms of CPS preserve randomness]\label{morphism_random}

Let $F$ be a morphism of computable probability spaces $(\X,\mu)$
and $(\mathcal{Y},\nu)$. Then every $\mu$-random point $x$ is in the
domain of computability of $F$ and $F(x)$ is $\nu$-random.
\end{proposition}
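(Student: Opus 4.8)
The plan is to prove the two assertions separately: membership in the domain of computability follows from Lemma~\ref{included}, while preservation of randomness follows from a pullback-of-tests argument.

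First I would handle membership in the domain of computability. By definition of a morphism, $F$ is computable on a constructive $G_\delta$-set $D$ of $\mu$-measure one, so $D=\bigcap_n V_n$ for some uniformly r.e.\ open sets $(V_n)_n$. Since $D\subseteq V_n$ and $\mu(D)=1$, each $V_n$ is an r.e.\ open set of full measure. Lemma~\ref{included} then gives that every $\mu$-random point lies in each $V_n$, hence in their intersection $D$. So if $x$ is $\mu$-random then $x\in D$, and in particular $F(x)$ is well defined.

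For the second claim I would argue by contradiction: suppose $x$ is $\mu$-random but $F(x)$ is not $\nu$-random. Then there is a Martin-L\"of $\nu$-test $(W_n)_n$ (uniformly r.e.\ open with $\nu(W_n)<2^{-n}$) such that $F(x)\in\bigcap_n W_n$. The idea is to pull this test back through $F$ to obtain a $\mu$-test that captures $x$. Writing $F^{-1}(B_k)\cap D=U_k\cap D$ for the uniform sequence $(U_k)_k$ witnessing computability on $D$, and expanding each $W_n$ as a union of ideal balls, the effective closure of r.e.\ open sets under unions yields uniformly r.e.\ open sets $\tilde U_n$ with $F^{-1}(W_n)\cap D=\tilde U_n\cap D$. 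Thus $(\tilde U_n)_n$ is a uniform sequence of r.e.\ open sets.

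It then remains to check the two defining properties of a test and the covering of $x$. Because $\mu(D)=1$, we have $\mu(\tilde U_n)=\mu(\tilde U_n\cap D)=\mu(F^{-1}(W_n)\cap D)=\mu(F^{-1}(W_n))$, and since $F$ is measure-preserving this equals $\nu(W_n)<2^{-n}$; hence $(\tilde U_n)_n$ is a Martin-L\"of $\mu$-test. Finally, from $x\in D$ and $F(x)\in W_n$ we get $x\in F^{-1}(W_n)\cap D=\tilde U_n\cap D$, so $x\in\bigcap_n\tilde U_n$, contradicting the $\mu$-randomness of $x$. I expect the only delicate point to be the bookkeeping around ``computable on $D$'': the pullback sets $\tilde U_n$ agree with the true preimages $F^{-1}(W_n)$ only on $D$, so one must invoke $\mu(D)=1$ twice — once to transfer the measure bound and once (together with $x\in D$) to transfer membership — rather than claiming an exact equality of sets on all of $X$.
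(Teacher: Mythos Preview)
The paper does not actually supply a proof of this proposition---it is simply quoted from \cite{HoyRoj07}---so there is no in-paper argument to compare against. Your proof is correct and is the standard one: applying Lemma~\ref{included} to each layer of the constructive $G_\delta$ places every $\mu$-random point in $D$, and pulling back a $\nu$-test $(W_n)_n$ through $F$ yields a $\mu$-test capturing $x$; your explicit bookkeeping that $\tilde U_n$ agrees with $F^{-1}(W_n)$ only on $D$, and that $\mu(D)=1$ is needed both for the measure bound and for membership, is exactly the right care to take.
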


\subsection{Kolmogorov complexity}
The idea is to define, for a finite object, the  minimal amount of
algorithmic information from which the object can be recovered. That is, the length of the shortest description (code) of the object. Since this shortest description is supposed to contain all necessary information to reconstruct in an algorithmic way the coded finite object, the Kolmogorov Complexity is also called \emph{Algorithmic Information Content}. For a complete introduction to Kolmogorov complexity we refer to a standard text \cite{Vit93}.

Let $\Sigma^*$ and $\Sigma^{\N}$ be the sets of finite and infinite words (over the finite alphabet $\Sigma$) respectively. A word $w \in \Sigma^*$ defines the \emph{cylinder} $[w]\subset \Sigma^{\N}$ of all possible continuations of $w$. A set $D=\{w_1,w_2,...\}\subset \Sigma^*$ defines an open set $[D]=\cup_i[w_i]\subset \Sigma^{\N}$. $D$ is called prefix-free if no word of $D$ is prefix of another one, that is if the cylinders $[w_i]$ are pairwise disjoint.

Let $X$ be $\Sigma^*$ or $\N$ or $\N^*$.

\begin{definition}
An \defin{interpreter} is a partial recursive function
$\FI:\{0,1\}^{*}\rightarrow X$ which has a prefix-free domain.
\end{definition}

\begin{definition}Let $I:\{0,1\}^{*}\rightarrow X$ be an interpreter. The \defin{complexity} (or \defin{Information Content}) $K_I(x)$ of $x \in X$ is defined to be
\[
K_I(x)=\left\{ \begin{array}{ll}
 |p| & \textrm{ if }p \textrm{ is a shortest input such that }I(p)=x \\
\infty & \textrm{ if there is no }p\textrm{  such that }I(p)=x
\end{array} \right.
\]
\end{definition}

It turns out that there exists an algorithmic enumeration of all interpreters, which entails the existence of a universal interpreter $U$ which is asymptotically optimal in the sense that the \emph{invariance theorem} holds:

\begin{theorem}[Invariance theorem] For all interpreter $I$ there exists $c_I \in \mathbb{N}$ such that for all $x \in X$ we have $K_U(w)\leq K_I(x)+c_I$.
\end{theorem}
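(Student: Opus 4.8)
The plan is to exploit the stated algorithmic enumeration $(I_e)_{e\in\N}$ of all interpreters and to build from it a single interpreter $U$ that simulates each $I_e$ at the cost of a constant-length prefix encoding the index $e$. The key ingredient is a \emph{self-delimiting} encoding of the integers: a recursive injection $e\mapsto\bar{e}\in\{0,1\}^*$ whose range is prefix-free and such that, reading an input bit by bit, one can detect algorithmically where $\bar{e}$ ends (for instance the classical code doubling each bit of the binary expansion of $e$ and appending a separator).

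With this in hand I would define $U(\bar{e}\cdot p):=I_e(p)$ (concatenation) and argue it is an interpreter. It is partial recursive: given $w$, the self-delimiting property lets one parse off a prefix $\bar{e}$ effectively and then run $I_e$ on the remaining suffix $p$, which is computable since the enumeration makes $\uple{e,p}\mapsto I_e(p)$ partial recursive. Its domain is prefix-free: if $\bar{e}\,p$ is a prefix of $\bar{e'}\,p'$ with both strings in $\dom(U)$, then $e\neq e'$ would force $\bar{e}$ and $\bar{e'}$ (being both prefixes of the common string $\bar{e'}\,p'$) to be comparable, contradicting prefix-freeness of the integer code; so $e=e'$ and $\bar{e}=\bar{e'}$, whence $p$ is a prefix of $p'$, and prefix-freeness of $\dom(I_e)$ gives $p=p'$.

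Optimality is then immediate: any interpreter $I$ equals some $I_e$, and if $p$ is a shortest program with $I_e(p)=x$ then $U(\bar{e}\,p)=x$, so $K_U(x)\le|\bar{e}|+|p|=K_I(x)+|\bar{e}|$; taking $c_I:=|\bar{e}|$ finishes the argument (the inequality being vacuous when $K_I(x)=\infty$). I expect the only genuine point requiring care to be the prefix-freeness of $\dom(U)$: this is precisely where both hypotheses are used together, the self-delimiting nature of the integer code to separate distinct indices and the prefix-free domain of each $I_e$ to separate distinct programs sharing the same index. The remainder is routine bookkeeping about the computability of the parsing and the simulation.
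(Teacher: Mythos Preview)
Your argument is correct and is the standard construction of a universal prefix machine: concatenate a self-delimiting code for the index $e$ with a program $p$ for $I_e$, check that the resulting domain is prefix-free (using both the prefix-freeness of the integer code and of each $\dom(I_e)$), and read off the bound $K_U(x)\leq K_I(x)+|\bar{e}|$.

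Note, however, that the paper does \emph{not} supply its own proof of this statement. The Invariance Theorem is quoted as background (the preceding sentence says only that ``there exists an algorithmic enumeration of all interpreters, which entails the existence of a universal interpreter $U$ which is asymptotically optimal''), and the reader is referred to a standard text on Kolmogorov complexity. So there is no in-paper proof to compare against; your construction is exactly the classical one such a reference would contain.
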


We fix a universal interpreter $U$ and we let $K(x)=K_U(x)$.
\subsubsection{Estimates}\label{Kolmo_estimates}Let us recall
some simple estimates of complexity. Let $f,g$ be real-valued functions. We
say that $g$ \defin{additively dominates} $f$ and write $f\leqplus g$ if
there is a constant $c$ such that $f\leq g+c$. As codes are always
\emph{binary} words, we use base-2 logarithms, which we denote by
$\log$. We define $J(x)=x+2\log(x+1)$ for $x\geq 0$.

For $n\in\N$, $K(n)\leqplus J(\log n)$. For $n_1,\ldots,n_k\in \N$,
$K(n_1,\ldots,n_k)\leqplus K(n_1)+\ldots+K(n_k)$. The following property is
a version of a result attributed to Kolmogorov, stated in terms of prefix
complexity instead of plain complexity.

\begin{proposition}\label{prop_complexity_stratified_set}
Let $E\subseteq \N\times X$ be a r.e. set such that $E_n=\{x:(n,x)\in E\}$
is finite for all $n$. Then for $(s,n)$ with $s\in E_n$,
$$
K(s)\leqplus J(\log|E_n|)+K(n)
$$
\end{proposition}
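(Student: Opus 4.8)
The plan is to exhibit a single interpreter $I$ for which $K_I(s)$ is at most $K(n)+J(\log|E_n|)$ up to an additive constant, and then to conclude by the Invariance theorem that $K(s)\leqplus K_I(s)$. The description of $s$ realized by $I$ is the natural one suggested by the hypothesis: specify $n$, then specify the position of $s$ inside the finite set $E_n$.

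First I would fix an algorithm enumerating the r.e. set $E$ as a sequence of pairs, and use it to assign to each $s\in E_n$ an index $i(s,n)<|E_n|$, namely the rank of $s$ among the elements of $E_n$ in their order of first appearance. This numbering is recoverable algorithmically: there is a procedure which, given $n$ and $i$, runs the enumeration of $E$ and halts outputting the $i$-th distinct element whose first coordinate is $n$; crucially, it halts exactly when $i<|E_n|$.

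Next I would define $I(p)$ on $p\in\{0,1\}^*$ as follows: parse $p$ as a concatenation $q\bar\imath$, where $q$ lies in the prefix-free domain of the fixed universal interpreter $U$ and decodes to a number $n=U(q)$, and $\bar\imath$ is a self-delimiting (prefix) code of a number $i$; then run the procedure above on $(n,i)$ and output its result. Since $\dom(U)$ is prefix-free, the split of $p$ into $q$ and $\bar\imath$ is unique whenever it exists, and since the integer code $\bar\imath$ is itself prefix-free, $\dom(I)$ is prefix-free; thus $I$ is a legitimate interpreter. Now fix $(s,n)$ with $s\in E_n$ and put $i=i(s,n)$. Taking $q$ to be a shortest $U$-program for $n$ gives $|q|=K(n)$, and choosing for $\bar\imath$ the standard self-delimiting code of $i$ gives $|\bar\imath|\leqplus J(\log|E_n|)$ (this is the very code underlying the estimate $K(n)\leqplus J(\log n)$ recalled above, and $J\circ\log$ is nondecreasing while $i<|E_n|$). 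The program $q\bar\imath$ belongs to $\dom(I)$ and outputs $s$, so $K_I(s)\le|q|+|\bar\imath|\leqplus K(n)+J(\log|E_n|)$. Applying the Invariance theorem, $K(s)=K_U(s)\leqplus K_I(s)\leqplus K(n)+J(\log|E_n|)$, which is the claim.

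The delicate point is that $|E_n|$, though finite, is not computable from $n$: the enumeration of $E$ never reveals when $E_n$ has been exhausted. This forbids encoding the index $i$ in a block of $\lceil\log|E_n|\rceil$ bits of known length, and forces the use of a self-delimiting code for $i$, whose length carries the extra $2\log(\log|E_n|+1)$ overhead encapsulated in $J$. Once the index is made self-delimiting, finiteness of $E_n$ is used only to guarantee that the decoding procedure halts, so no effective bound on $|E_n|$ is ever needed.
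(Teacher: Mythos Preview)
The paper does not actually give a proof of this proposition: it is stated in the ``Estimates'' paragraph as a known fact (``a version of a result attributed to Kolmogorov'') and then used later without further justification. So there is no proof in the paper to compare against.

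Your argument is the standard one and it is correct. The construction of the interpreter $I$ is sound: concatenating a $U$-program for $n$ with a self-delimiting code for the index $i$ yields a prefix-free domain (because $\dom(U)$ and the integer code are each prefix-free, so the concatenation parses uniquely), and the decoding procedure---run $U$ to recover $n$, read off $i$, then enumerate $E$ until the $i$-th element of $E_n$ appears---is partial recursive and halts precisely when $i<|E_n|$. The length bound $|q|+|\bar\imath|\leqplus K(n)+J(\log|E_n|)$ follows since $i<|E_n|$ and $J\circ\log$ is nondecreasing, and the Invariance theorem supplies the single additive constant needed for $\leqplus$.

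Your closing paragraph, explaining why a self-delimiting code for $i$ is forced (because $|E_n|$ is not computable from $n$, so one cannot reserve a block of $\lceil\log|E_n|\rceil$ bits), is a nice clarification of where the $J$ overhead comes from, even if it is not strictly required for the proof.
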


\begin{proposition}\label{prop_complexity_measure}
Let $\mu$ be a computable measure on $\Sigma^\N$. For all $w\in\Sigma^*$,
$$
K(w)\leqplus -\log \mu([w])+K(|w|)
$$
\end{proposition}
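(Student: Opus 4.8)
The plan is to realize each word $w$ by a short codeword in a suitable prefix-free interpreter and then invoke the invariance theorem. The guiding principle (the coding idea attributed to Kolmogorov) is that if we distribute the unit interval among the words so that $w$ receives a dyadic piece of length comparable to $\mu([w])$, discounted by a factor accounting for $|w|$, then $w$ can be named by a codeword of length roughly $-\log\mu([w])+K(|w|)$, and by optimality of $U$ this bounds $K(w)$.

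First I would assemble the relevant semimeasure. In Cantor space each cylinder $[w]$ is an ideal ball, so Theorem \ref{mu_computable} gives that $\mu([w])$ is lower semi-computable uniformly in $w$. Since the domain of the fixed universal interpreter $U$ is prefix-free, the Kraft inequality applied to the shortest programs yields $\sum_n 2^{-K(n)}\le 1$, and $2^{-K(n)}$ is lower semi-computable because $K$ is upper semi-computable. Consequently
$$
P(w):=2^{-K(|w|)}\,\mu([w])
$$
is lower semi-computable uniformly in $w$, and since the length-$n$ cylinders partition $\Sigma^\N$ we have $\sum_{|w|=n}\mu([w])=1$, hence $\sum_{w}P(w)=\sum_n 2^{-K(n)}\le 1$. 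Thus $P$ is a lower semi-computable discrete semimeasure on $\Sigma^*$.

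Next I would convert $P$ into codewords by a Kraft--Chaitin construction. Set target lengths $l_w:=\ceil{-\log P(w)}=\ceil{-\log\mu([w])+K(|w|)}$, so that $\sum_w 2^{-l_w}\le\sum_w P(w)\le 1$. As $\mu([w])$ is lower semi-computable and $K(|w|)$ upper semi-computable, $l_w$ is upper semi-computable: I can enumerate a non-increasing integer sequence $l_w^{(t)}\downarrow l_w$. Feeding the requests $(l_w^{(t)},w)$ into a prefix machine that greedily allocates disjoint dyadic subintervals of $[0,1)$ produces an interpreter $I$ with $K_I(w)\le l_w$. The invariance theorem then gives $K(w)\leqplus l_w\leqplus -\log\mu([w])+K(|w|)$, which is the claim.

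The hard part will be the effective, semicomputable bookkeeping in the Kraft--Chaitin step: $l_w$ is never known exactly but only approached from above, so the interval allocation must be organized so that the running total never exceeds the budget despite repeated improvements, and one must check that the codeword finally assigned to $w$ has length at most $l_w+O(1)$; reissuing a request for $w$ each time its estimate drops at most doubles the weight (a convergent geometric sum), which costs one extra bit and is absorbed by $\leqplus$. A clean way to isolate this difficulty is to prove once the coding lemma ``every lower semi-computable semimeasure $P$ satisfies $K\leqplus-\log P$'' and apply it to the $P$ above. I would also note that the purely counting bound of Proposition \ref{prop_complexity_stratified_set}, applied to the strata $\{w:|w|=n,\ \mu([w])>2^{-k}\}$, yields only $K(w)\leqplus-\log\mu([w])+K(|w|)+O(\log(-\log\mu([w])))$, whose logarithmic excess is \emph{not} absorbed by $\leqplus$; it is precisely the measure weighting in the Kraft argument that removes it.
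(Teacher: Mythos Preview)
Your proof is correct. The paper itself does not give a proof of this proposition: it is listed among the background estimates in Section~\ref{Kolmo_estimates} (``Let us recall some simple estimates of complexity'') and is attributed to the literature (see \cite{Vit93}). Your argument---setting $P(w)=2^{-K(|w|)}\mu([w])$, checking that it is a lower semi-computable discrete semimeasure, and concluding via the Coding Theorem---is precisely the standard derivation, and it is what the paper implicitly has in mind, since it states the Coding Theorem (Theorem~\ref{coding_theorem}) immediately afterwards. You could therefore shorten your write-up considerably: once $P$ is shown to be a lower semi-computable semimeasure, simply invoke Theorem~\ref{coding_theorem} rather than redoing the Kraft--Chaitin construction by hand.
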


\begin{theorem}[Coding theorem]\label{coding_theorem}
Let $P:X\to \Rplus$ be a lower semi-computable function such that $\sum_x P(x)\leq 1$. Then $K(x)\leqplus -\log P(x)$, i.e. there is a constant $c$ such that $K(x)\leq -\log P(x)+c$ for all $x\in X$.
\end{theorem}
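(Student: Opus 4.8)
The plan is to reduce the statement, via the Invariance theorem, to the construction of a single interpreter $I$ for which $K_I(x)\leq -\log P(x)+O(1)$: once such an $I$ is built, the Invariance theorem gives $K(x)=K_U(x)\leqplus K_I(x)$, and composing the two constants yields the claim. Note first that $\sum_x P(x)\leq 1$ forces $P(x)\in[0,1]$ for every $x$, and for those $x$ with $P(x)=0$ we have $-\log P(x)=+\infty$, so the inequality is vacuous there; we only need to produce short codes for the $x$ with $P(x)>0$.

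Since $P$ is lower semi-computable, I would fix an algorithm producing, uniformly in $x$, a nondecreasing sequence of rationals $(p_{x,t})_{t\in\N}$ with $P(x)=\sup_t p_{x,t}$. Running this enumeration, each time the current approximation $p_{x,t}$ first exceeds a dyadic threshold $2^{-k}$ (i.e. each time we learn $P(x)>2^{-k}$), I would emit a \emph{request} to reserve for $x$ a binary codeword of length $k+1$. This produces a single recursively enumerable list of requests $(\ell_i,x_i)$. The choice of length $k+1$ is a geometric-series bookkeeping device: for fixed $x$ with $P(x)=p$, the emitted lengths form a strictly decreasing sequence whose least element is $\lfloor -\log p\rfloor+2\leq -\log p+2$, and the requests for $x$ contribute exactly $\sum 2^{-\ell}=2^{-(\lfloor -\log p\rfloor+1)}\leq p$ to the total. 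Summing over $x$, the \emph{Kraft weight} satisfies $\sum_i 2^{-\ell_i}\leq\sum_x P(x)\leq 1$.

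The heart of the argument is then to turn this r.e. list of requests, of Kraft weight at most $1$, into an interpreter (the effective Kraft--Chaitin realization of the Kraft inequality). I would process the requests in enumeration order, identifying a binary word $w$ with the dyadic subinterval $[0.w,\,0.w+2^{-|w|})$ of $[0,1)$, so that a prefix-free domain corresponds to a family of pairwise disjoint such intervals. Maintaining the unallocated part of $[0,1)$ as a finite disjoint union of dyadic intervals of \emph{distinct} lengths, and filling each request of length $\ell$ with a free dyadic interval of size $2^{-\ell}$ (splitting a larger free interval when necessary), one verifies by a short invariant that, as long as the total allocated measure stays $\leq 1$, a free slot of the required size is always available. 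Setting $I(w)=x_i$ when $w$ is the word allocated to the $i$-th request gives a partial recursive function with prefix-free domain, i.e. a genuine interpreter. The main obstacle is precisely this online allocation: unlike the classical non-effective Kraft inequality, the requested lengths here arrive in an unpredictable order, so the bookkeeping guaranteeing that a free interval of exactly the right size always exists must be made to work incrementally; this is the one genuinely combinatorial step.

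Finally, for each $x$ with $P(x)>0$ the list contains a request, and hence a codeword, of length at most $-\log P(x)+2$ that $I$ maps to $x$, so $K_I(x)\leq -\log P(x)+2$. Feeding this into the Invariance theorem yields $K(x)\leqplus -\log P(x)$, as required.
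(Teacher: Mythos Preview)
The paper does not actually prove this theorem: it is stated in the preliminaries (Section~\ref{Kolmo_estimates}) as a classical fact from algorithmic information theory, with the reader referred to standard sources such as \cite{Vit93}. So there is no proof in the paper to compare your attempt against.

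That said, your argument is the standard Kraft--Chaitin proof and is correct. A couple of minor points worth tightening: (i) you should say explicitly that the requests are generated by dovetailing the enumerations of $(p_{x,t})_t$ over all $x\in X$ (recall $X$ is $\Sigma^*$, $\N$, or $\N^*$, hence effectively enumerable), so that the global list of requests is a single r.e.\ sequence; (ii) the phrase ``strictly decreasing sequence'' for the emitted lengths is slightly misleading in real time---the lengths are emitted in \emph{decreasing} order of $k$ (equivalently, increasing $p_{x,t}$ crosses larger and larger thresholds), but the point you need is only that the shortest length ever emitted for $x$ is at most $-\log P(x)+2$ and that the geometric sum of all emitted weights for $x$ is at most $P(x)$, both of which you verify correctly. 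The online interval-allocation invariant you describe is exactly the Kraft--Chaitin lemma; if you want to be fully self-contained you could spell out the invariant (the free region is always a disjoint union of dyadic intervals of pairwise distinct sizes whose total length equals $1$ minus the allocated weight), but as stated the sketch is adequate.
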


Moreover, $\sum_x 2^{-K(x)}\leq 1$ as it is the Lebesgue measure of the
domain of the universal interpreter $U$. There is a relation between
Kolmogorov complexity and randomness, initial segments of random infinite
strings being maximally complex.

\begin{theorem}[Chaitin, Levin]\label{theorem_deficiency}Let $\mu$ be a
  computable measure. Then $\omega \in \Sigma^{\N}$ is a $\mu$-random
  sequence if and only if $\exists m$ $\forall n$ $K(\omega_{1:n})\geq-\log
  \mu[\omega_{1:n}]-m$.
\end{theorem}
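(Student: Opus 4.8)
The statement is the Levin--Schnorr characterization of randomness, and the plan is to prove the two implications separately, each being an application of one half of the Coding theorem (Theorem \ref{coding_theorem}). Throughout I would use that for a computable measure $\mu$ on $\Sigma^\N$ the value $\mu[w]$ is \emph{computable} uniformly in $w$: a cylinder $[w]$ is clopen, so $\mu[w]$ is lower semi-computable by Theorem \ref{mu_computable}, and its complement, being a finite union of cylinders of length $|w|$, makes $\mu[w]$ upper semi-computable as well. I would phrase both directions contrapositively in terms of the \emph{deficiency} $\delta_n(\omega):=-\log\mu[\omega_{1:n}]-K(\omega_{1:n})$, so that the condition ``$\exists m\,\forall n\,K(\omega_{1:n})\geq -\log\mu[\omega_{1:n}]-m$'' reads ``$\sup_n\delta_n(\omega)<\infty$''.

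For the implication that randomness forces the complexity lower bound, I would show that unbounded deficiency implies non-randomness. For each $m$ set
\[
U_m=\bigcup\{[w]:K(w)<-\log\mu[w]-m\}.
\]
Since $K$ is upper semi-computable and $\mu[w]$ is computable, the condition $\mu[w]<2^{-K(w)-m}$ is semi-decidable, so $(U_m)_m$ is uniformly r.e. open. Selecting the prefix-minimal words $A_m$ generating $U_m$ (a prefix-free, hence $\mu$-disjoint, family) and using $\mu[w]<2^{-K(w)-m}$ together with $\sum_w 2^{-K(w)}\leq 1$, I would bound $\mu(U_m)\leq\sum_{w\in A_m}2^{-K(w)-m}\leq 2^{-m}$. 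Thus $(U_m)_m$ is a Martin-L\"of $\mu$-test, and any $\omega$ with $\sup_n\delta_n(\omega)=\infty$ lies in every $U_m$, hence in an effective null set, and is not random.

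For the converse, that the lower bound implies randomness, I would show that non-randomness forces unbounded deficiency. Take the universal Martin-L\"of test $\bigcap_n V_n$ with $\mu(V_n)<2^{-n}$, and write each $V_{2k}$ as a disjoint union of cylinders $[w]$, $w\in W_{2k}$, by a prefix-free r.e. decomposition, so that $\sum_{w\in W_{2k}}\mu[w]=\mu(V_{2k})<2^{-2k}$. Define
\[
P(w)=\sum_{k\geq 1}2^{k}\,\mu[w]\,\mathbf{1}[w\in W_{2k}].
\]
Then $P$ is lower semi-computable, and $\sum_w P(w)\leq\sum_k 2^{k}\mu(V_{2k})\leq\sum_k 2^{-k}=1$, so the Coding theorem gives $K(w)\leqplus -\log P(w)$. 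If $\omega$ is not random then $\omega\in V_{2k}$ for every $k$, so some initial segment $w_k=\omega_{1:n_k}$ lies in $W_{2k}$, whence $P(w_k)\geq 2^{k}\mu[w_k]$ and $K(\omega_{1:n_k})\leqplus -\log\mu[\omega_{1:n_k}]-k$. Therefore $\delta_{n_k}(\omega)\geq k-c$ for a constant $c$ and all $k$, and the deficiency is unbounded.

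The main obstacle I anticipate is the prefix-free (disjoint-cylinder) decomposition of the r.e. open sets $V_n$, done uniformly in $n$, since the measure estimate $\sum_{w\in W_{2k}}\mu[w]=\mu(V_{2k})$ and hence the bound $\sum_w P(w)\leq 1$ depend on the cylinders being pairwise disjoint; the naive antichain ``$w$ minimal in the enumeration'' is only co-r.e., so I would instead process the enumeration of $V_n$ incrementally, emitting only the genuinely new clopen part at each stage. The remaining points---uniform computability of $\mu[w]$, lower semi-computability of $P$, and the semi-decidability of the defining condition of $U_m$---are routine verifications from Theorem \ref{mu_computable} and the upper semi-computability of $K$.
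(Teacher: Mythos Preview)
The paper does not actually prove this theorem: it is stated as a classical result attributed to Chaitin and Levin, and the surrounding paragraph simply refers the reader to \cite{Vit93} for a proof. So there is no ``paper's own proof'' to compare against.

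Your argument is the standard Levin--Schnorr proof and is correct. Both directions are handled cleanly: the test $(U_m)_m$ in the first direction is uniformly r.e.\ because you enumerate \emph{all} qualifying words (the prefix-minimal antichain $A_m$ is used only for the measure estimate, where effectiveness is irrelevant), and the discrete semimeasure $P$ in the second direction is legitimate once you have the uniform prefix-free decomposition of the $V_{2k}$. The obstacle you flag---obtaining that decomposition effectively---is real but routine: process the enumeration of cylinders generating $V_n$ and, whenever a new cylinder $[w]$ arrives, emit the finitely many maximal subcylinders of $[w]$ disjoint from the cylinders already output. This yields a uniformly r.e.\ prefix-free set $W_n$ with $[W_n]=V_n$, which is exactly what your bound $\sum_w P(w)\leq 1$ needs.
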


The minimal such $m$, defined by $d_\mu(\omega):=\sup_n \{-\log \mu[\omega_{1:n}]-K(\omega_{1:n})\}$ and called the \defin{randomness deficiency} of $\omega$ w.r.t $\mu$, is not only finite almost everywhere: it has finite mean, that is $\int d_\mu(\omega) d\mu\leq 1$. For a proof see \cite{Vit93}.

\section{Effective symbolic dynamics and statistics of random points}\label{esd}

Let $(\X,\mu)$ be a computable probability space and let $R_\mu$ be the set of random points. The aim of this section is to study the set $R_\mu$ from a dynamical point of view. That is, we will put a dynamic $T$ on $(\X,\mu)$ (an endomorphism of computable probability space), and look at the abilities of random points (which are \emph{a priori} independent of $T$) to describe the statistical properties of $T$.

We recall that a Borel set $A$ is called \defin{$T$-invariant} if $T^{-1}(A)=A$(mod 0) and that the transformation $T$ is said to be \defin{ergodic} if every $T$-invariant set has measure 0 or 1.

\subsection{Symbolic dynamics of random points}\label{section_effective_symbolic}

Let $T$ be an endomorphism of the (Borel) probability space $(X,\mu)$. In the classical construction, one considers access to the system given by a finite measurable partition, that is a finite collection of pairwise disjoint Borel sets $\mathcal{P}=\{p_1,\ldots,p_k\}$ such that $\mu(\cup_i p_i)=1$. Then, to $(X,\mu,T)$ is associated a \emph{symbolic dynamical system} $(X_{\P},\sigma)$ (called the symbolic model of $(X,T,\P)$). The set $X_{\P}$ is a subset of $\{1,2,\ldots,k\}^{\mathbb{N}}$. To a point $x\in X$ corresponds an infinite sequence $\omega=(\omega_i)_{i\in \mathbb{N}}=\phi_{\P}(x)$ defined by:
\[
\phi_{\P}(x)=\omega \Leftrightarrow \forall j \in \mathbb{N}, T^j(x)\in p_{\omega_j}
\]
The transformation $\sigma : X_{\P} \rightarrow X_{\P}$ is \emph{the shift} defined by $\sigma((\omega_i)_{i\in \mathbb{N}})=(\omega_{i+1})_{i \in \mathbb{N}}$.

As $\P$ is a measurable partition, the map $\phi_{\P}$ is measurable and then the measure $\mu$ induces the measure $\mu_{\P}$ (on the associated symbolic model) defined by $\mu_{\P}(B)=\mu(\phi_{\P}^{-1}(B))$ for all measurable $B \subset X_{\P}$.

The requirement of $\phi_{\P}$ being measurable makes the symbolic
model appropriate from the measure-theoretic view point, but is not
enough to have a symbolic model compatible with the computational
approach:

\begin{definition}
Let $T$ be an endomorphism of the computable probability space
$(\X,\mu)$ and $\P=\{p_1\dots,p_k\}$ a finite measurable
partition. The associated symbolic model $(X_{\P},\mu_{\P},\sigma)$ is
said to be \textit{\textbf{an effective symbolic model}} if the map
$\phi_{\P}:X \to \{1,\dots,k\}^{\N}$ is a morphism of CPS (here the space $\{1,\dots,k\}^{\N}$ is endowed with the standard computable structure).
\end{definition}

The sets $p_i$ are called the \textit{\textbf{atoms}} of
$\mathcal{P}$ and we denote by $\mathcal{P}(x)$ the atom containing
$x$ (if there is one).
Observe that $\phi_\P$ is computable on its domain only if the atoms are open r.e sets (in the
domain):

\begin{definition}[Computable partitions]\label{mu-partitions}
A measurable partition $\mathcal{P}$ is said to be a
\textit{\textbf{computable partition}} if its atoms are r.e open
sets.
\end{definition}

Conversely:

\begin{theorem}\label{symbolic}
Let $T$ be an endomorphism of the CPS $(X,\mu)$ and $\P=\{p_1\dots,p_k\}$ a finite computable partition. Then the associated symbolic model is effective.
\end{theorem}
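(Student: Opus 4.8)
The plan is to reduce the statement to a single computability claim and then invoke Proposition \ref{transfer}. By definition the model is effective precisely when $\phi_{\P}:X\to\{1,\dots,k\}^{\N}$ is a morphism of CPS, and by Proposition \ref{transfer} it suffices to exhibit a constructive $G_\delta$-set $D$ with $\mu(D)=1$ on which $\phi_{\P}$ is computable; the computability of $\mu_{\P}$ and the fact that $\phi_{\P}$ is measure-preserving onto $(X_{\P},\mu_{\P})$ then come for free, the latter holding tautologically since $\mu_{\P}=\mu\circ\phi_{\P}^{-1}$. Everything rests on the identity, for a cylinder $[w]$ with $w=w_0\cdots w_{n-1}$,
\[
\phi_{\P}^{-1}([w])=\bigcap_{j=0}^{n-1}T^{-j}(p_{w_j}),
\]
together with the fact that these cylinders are exactly the ideal balls of the standard structure on $\{1,\dots,k\}^{\N}$. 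So it is enough to understand the sets $T^{-j}(p_i)$ effectively.

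First I would control the iterates. As an endomorphism of the CPS, $T$ is measure-preserving and computable on its domain $D_T$, a constructive $G_\delta$-set with $\mu(D_T)=1$; write $D_T=\bigcap_m V_m$ with $(V_m)_m$ uniformly r.e. open. Setting $D_j:=\bigcap_{l=0}^{j-1}T^{-l}(D_T)$, an induction on $j$ using effective composition — the preimage of a uniform sequence of r.e. open sets under a computable map is again uniformly r.e. open (see the remark following Definition \ref{functions}) — shows that $T^j$ is computable on $D_j$ \emph{uniformly in} $j$, with the witnessing relations $T^{-j}(B_n)\cap D_j=U_{j,n}\cap D_j$ given by a family $(U_{j,n})$ uniformly r.e. open, and with $(D_j)_j$ a uniformly presented family of constructive $G_\delta$-sets. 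Measure preservation gives $\mu(T^{-l}(D_T))=\mu(D_T)=1$, hence $\mu(D_j)=1$ for every $j$, and likewise $\mu\big(T^{-j}(U)\big)=\mu(U)=1$ where $U:=\bigcup_i p_i$.

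Next I would assemble the domain and verify computability. Put
\[
D:=\bigcap_{j\ge0}D_j\ \cap\ \bigcap_{j\ge0}T^{-j}(U).
\]
As a countable intersection of full-measure sets, $\mu(D)=1$. It is a constructive $G_\delta$-set by the following gluing observation: since $D\subseteq\bigcap_l D_l\subseteq D_j$ for every $j$, on $D$ each relativized equality is in force, so the conditions ``$x\in T^{-j}(U)$'' and ``$x\in\widetilde W_j$'' coincide there (where $T^{-j}(U)\cap D_j=\widetilde W_j\cap D_j$, $\widetilde W_j$ uniformly r.e. open); hence $D=\bigcap_l D_l\cap\bigcap_j\widetilde W_j$, an intersection of a uniform family of r.e. open sets. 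On $D$ the map $\phi_{\P}$ is well defined and single-valued: for each $j$, $T^j(x)\in U$ lies in exactly one atom (the atoms being pairwise disjoint), fixing $\phi_{\P}(x)_j$. For computability, the same device replaces each preimage by its representative: since $p_i$ is r.e. open and $T^j$ is computable on $D_j\supseteq D$, one has $T^{-j}(p_i)\cap D=V_{j,i}\cap D$ with $(V_{j,i})$ uniformly r.e. open, so by the displayed identity $\phi_{\P}^{-1}([w])\cap D=\bigcap_{j<|w|}V_{j,w_j}\cap D$ is r.e. open relative to $D$, uniformly in $w$ — a finite intersection of r.e. open sets. This is exactly computability of $\phi_{\P}$ on $D$, and Proposition \ref{transfer} finishes the proof.

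The main obstacle I anticipate is the effective-composition step and its uniformity: one must establish \emph{simultaneously} that $T^j$ is computable on $D_j$ uniformly in $j$, that the $D_j$ form a uniformly presented family of constructive $G_\delta$-sets, and that the relativizations $T^{-j}(V)\cap D=(\text{r.e. open})\cap D$ glue coherently. The delicacy is that $T^{-j}(V)$ is only well-behaved on the sub-domain where $T^j$ is computable, so a priori it is not r.e. open on all of $X$; the resolution is to keep $\bigcap_l D_l$ inside every intersection, which legitimizes replacing each preimage by its r.e. open representative. The measure-preservation hypothesis is precisely what ensures these nested preimages do not erode full measure as $j$ grows.
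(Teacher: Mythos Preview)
Your proof is correct and follows essentially the same route as the paper: build a full-measure constructive $G_\delta$ domain from the computability domain of $T$ intersected with all preimages of $\bigcup_i p_i$, verify that cylinder preimages are uniformly r.e.\ open relative to this domain, and conclude via Proposition~\ref{transfer}. The paper's argument is terser and simply writes $D\cap T^{-n}(p_1\cup\ldots\cup p_k)=D\cap U_n$ without explicitly tracking the domains $D_j$ on which the iterates $T^j$ are computable; your treatment of the effective-composition step and the gluing $D=\bigcap_l D_l\cap\bigcap_j\widetilde W_j$ makes rigorous a point the paper leaves implicit.
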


\begin{proof}
Let $D$ be the domain of computability of $T$ (it is a full-measure
constructive $G_\delta$). Define the set
$$
X^\P=D\cap \bigcap_{n\in\N} T^{-n}(p_1 \cup \ldots \cup p_k)
$$ $X^\P$ is a full-measure constructive $G_\delta$-set: indeed, as
$p_1\cup\ldots\cup p_k$ is r.e. and $T$ is computable on $D$ there are
uniformly r.e. open sets $U_n$ such that $D\cap T^{-n}(p_1\cup\ldots\cup
p_k)=D\cap U_n$, so $X^\P=D\cap\bigcap_n U_n$. As $T$ is
measure-preserving, all $U_n$ have measure one.

Now, $X^\P\cap \phi_\P^{-1}[i_0,\ldots,i_n]=X^\P\cap p_{i_0}\cap
T^{-1}p_{i_1}\cap\ldots\cap T^{-n}p_{i_n}$. This proves that $\phi_{\P}$ is computable over $X^\P$. Proposition
\ref{transfer} allows to conclude.
\end{proof}

After the definition an important question is: are there computable partitions? the answer depends on the existence of open r.e sets with a zero-measure boundary.

\begin{definition}\label{almost_decidable}
A set $A$ is said to be \defin{almost decidable} if there are two r.e open sets $U$ and $V$ such that:
$$
U\subset A, \quad V\subseteq \comp{A}, \quad \mu(U)+\mu(V)=1
$$
\end{definition}

\begin{remarks}
\item a set is almost decidable if and only if its complement is almost decidable,
\item an almost decidable set is always a continuity set,
\item a $\mu$-continuity ideal ball is always almost decidable,
\item unless the space is disconnected (i.e. has
non-trivial clopen subsets), no set can be \emph{decidable}, i.e.
semi-decidable (r.e) and with a semi-decidable complement (such a set must
be clopen\footnote{In Cantor space for example (which is totally
disconnected), every cylinder (ball) is a decidable set. Indeed, deciding
if some infinite sequence belongs to some cylinder reduces to
a finite pattern-matching.}). Instead, a set can be decidable \emph{with probability $1$}: there is an algorithm which decides if a point belongs to the set or not, for almost every point. That is why we call it \emph{almost decidable}.
\end{remarks}

Ignoring computability, the existence of open $\mu$-continuity sets directly follows from the fact that the collection of open sets is uncountable  and $\mu$ is finite. The problem in the computable setting is that there are only countable many open r.e sets. Fortunately, there still always exists a basis of almost decidable balls. This result, first obtained in \cite{HoyRoj07} with other techniques, will be used many times in the sequel, in particular it directly implies the existence of computable partitions. For completeness we present a different, self-contained proof.

\begin{theorem}
There is a family of uniformly computable reals $(r^i_n)_{i,n\in \N}$ such
that for all $i$, $\{r^i_n:n\in\N\}$ is dense in $\R^+$ and such that for
every $i,n$, the ball $B(s_i,r^i_n)$ is almost decidable.
\end{theorem}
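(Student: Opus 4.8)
The plan is to reduce the statement to a one-dimensional problem about the distribution of distances from $s_i$, and then to locate, inside every rational interval, a computable radius whose sphere is $\mu$-null. Fix $i$. For any computable real $r$, both the open ball $B(s_i,r)=\{x:d(x,s_i)<r\}$ and the set $V_r=\{x:d(x,s_i)>r\}$ are r.e.\ open, uniformly in a program for $r$: since $d(\cdot,s_i)$ is a computable function (uniformly in $i$) and $r$ is lower semi-computable, $B(s_i,r)=\bigcup_{q<r}\{d(\cdot,s_i)<q\}$ is a union over an r.e.\ set of uniformly r.e.\ open sets, and symmetrically for $V_r$ using that $r$ is upper semi-computable. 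We clearly have $B(s_i,r)\subseteq B(s_i,r)$ and $V_r\subseteq \comp{B(s_i,r)}$, while
\[
\mu(B(s_i,r))+\mu(V_r)=1-\mu\{x:d(x,s_i)=r\}.
\]
Thus, taking $U=B(s_i,r)$ and $V=V_r$, the ball $B(s_i,r)$ is almost decidable (Definition \ref{almost_decidable}) as soon as the sphere $\{d(\cdot,s_i)=r\}$ is $\mu$-null. It therefore suffices to produce, uniformly in $i$, a dense family of computable reals $r$ for which this sphere is null.

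To carry this out effectively I would exploit the annulus measures together with their semicomputability directions. For rational $a<b$ the set $\{x:a<d(x,s_i)<b\}$ is r.e.\ open, so its measure is lower semi-computable uniformly in $\uple{i,a,b}$; consequently the closed annulus measure $\mu\{x:a\le d(x,s_i)\le b\}=1-\mu\{d(\cdot,s_i)<a\}-\mu\{d(\cdot,s_i)>b\}$ is upper semi-computable, uniformly. The point is that we can certify that a closed annulus is \emph{small} but never that it is large, and the whole construction is organised around certifying such upper bounds.

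Inside a fixed rational interval $(a,b)$ I would build $r$ as the limit of a nested sequence of closed rational intervals $[a_k,b_k]\subseteq(a,b)$ whose lengths are at least halved at each step and whose measures are driven geometrically to $0$. Given $[a_k,b_k]$ together with a certified bound $\mu\{a_k\le d(\cdot,s_i)\le b_k\}\le c_k$, pick two \emph{disjoint} closed rational sub-intervals $L_1,L_2$ of $[a_k,b_k]$, each of length $<(b_k-a_k)/2$ (for instance the second and fourth fifths). Being disjoint, $\mu(L_1)+\mu(L_2)\le c_k$, so at least one of them has measure $\le c_k/2$. Enumerating decreasing upper bounds for $\mu(L_1)$ and $\mu(L_2)$ in parallel, I wait until one of them drops below $\tfrac23 c_k$; this search necessarily halts because the minimum is $\le c_k/2<\tfrac23 c_k$. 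I set $[a_{k+1},b_{k+1}]$ to be the chosen sub-interval and $c_{k+1}=\tfrac23 c_k$. Then $b_k-a_k\to0$ geometrically, so $r:=\lim_k a_k=\lim_k b_k$ is a computable real lying in $(a,b)$, and since $\bigcap_k[a_k,b_k]=\{r\}$, continuity of $\mu$ from above gives $\mu\{d(\cdot,s_i)=r\}\le\lim_k\mu\{a_k\le d(\cdot,s_i)\le b_k\}\le\lim_k c_k=0$. Running this uniformly over an enumeration of all rational intervals $(a,b)$ and over $i$ yields the dense, uniformly computable family $(r^i_n)_{i,n}$ with the desired almost-decidability property.

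The main obstacle is exactly the semicomputability asymmetry: the heavy spheres (atoms of the distance distribution) are the radii we must avoid, yet we can only ever confirm upper bounds on the measures of closed annuli and can never pin down where these atoms sit. The construction is designed so that each of its decisions amounts to confirming one such upper bound, and the guarantee that these confirmations always arrive rests on the elementary counting fact that disjoint sub-intervals carry total mass at most $1$, so a sub-interval of at most half the current mass always exists. Continuity of $\mu$ from above then upgrades ``the interval measures tend to $0$'' into ``the limiting sphere is exactly null'', which is all that is needed.
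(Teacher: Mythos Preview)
Your proposal is correct and follows essentially the same approach as the paper's own proof: both reduce the problem to finding a computable radius with a $\mu$-null sphere inside each rational interval by a nested-interval search that, at each stage, uses upper semi-computability of closed-annulus measures to pick one of two disjoint sub-intervals carrying at most a fixed fraction of the current mass. The only differences are cosmetic (the paper splits into thirds with threshold $2^{-k}$ where you split into fifths with threshold $\tfrac{2}{3}c_k$), and your exposition makes more explicit why a null sphere yields almost decidability.
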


\begin{proof}
Let $s_i$ be an ideal point. Put $I_{\uple{j,k}}=[q_j,q_k]$ with $q_j, q_k$
positive rational numbers. We show that for every $n=\uple{j,k}$ we can
compute, uniformly in $n$, a real $r^i_n\in I_n$ for which
$\mu(\partial B(s_i,r^i_n))=0$. First observe that for a closed interval
$I=[a,b]$ ($a,b\in \Q$), the complement of $B_I=\overline{B}(s_i,b)/B(s_i,a)$, is r.e open. Then by corollary \ref{mu_computable}, its measure
is lower semi-computable and then we can semi-decide for a given
rational $q$ the relation $\mu(B_I)<q$. The algorithm computing $r^i_n$
enumerates a sequence of nested closed intervals $(J_k)_{k\in\N}$
whose length tends to $0$, with $J_0=I_n$, and such that for all $k$,
$\mu(B_{J_k})<2^{-k+1}$. Then $\{r^i_n\}=\cap_{k\geq 1}J_k$. It works as follows:

In stage $k+1$ (the interval $J_k=[a,b]$ has already been found), put
$m=\frac{b-a}{3}$ and test in parallel $\mu(B_{[a,a+m]})<2^{-k}$ and
$\mu(B_{[b-m,b]})<2^{-k}$. Since $\mu(B_{J_k})<2^{-k+1}$, one of the
tests must stop, and then provides the ``good'' interval $J_{k+1}$ for
which the condition holds.
\end{proof}

We denote by $B^{\uple{i,n}}$ the almost decidable ball $B(s_i,r^i_n)$.

The family $\{B^\uple{i,n}:i,n\in\N\}$ is a basis for the topology. It is
even effectively equivalent to the basis of ideal balls : every ideal ball
can be expressed as a r.e. union of almost decidable balls, and vice-versa.

We finish presenting some results that will be needed in the next subsection.

\begin{corollary}\label{corollary_generating}
On every computable probability space, there exists a family of uniformly
computable partitions which generates the Borel $\sigma$-algebra.
\end{corollary}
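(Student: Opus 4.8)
The plan is to assemble the desired family directly from the almost decidable balls $B^{\uple{i,n}}=B(s_i,r^i_n)$ produced by the previous theorem. For each pair $\uple{i,n}$ I would set
\[
E^{\uple{i,n}}=\{x\in X:d(s_i,x)>r^i_n\}
\]
to be the open exterior of the ball, and define the two-atom partition $\P_{\uple{i,n}}=\{B^{\uple{i,n}},E^{\uple{i,n}}\}$. The claim is then that $(\P_{\uple{i,n}})_{i,n\in\N}$ is a uniformly computable family of partitions generating the Borel $\sigma$-algebra.

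First I would check that each $\P_{\uple{i,n}}$ is a computable partition, uniformly in $\uple{i,n}$. The two atoms are disjoint by construction. Both are r.e. open uniformly: $B^{\uple{i,n}}$ is an ideal ball with a uniformly computable radius, so $B^{\uple{i,n}}=\bigcup_{q<r^i_n}B(s_i,q)$ is a uniformly r.e. union of ideal balls (the set $\{q\in\Q:q<r^i_n\}$ is r.e. uniformly since $r^i_n$ is uniformly computable); and since $d$ is computable (last remark of Section \ref{CMS}) and $r^i_n$ is uniformly computable, $E^{\uple{i,n}}$ is the preimage of the uniformly r.e. open set $(r^i_n,\infty)$ under the computable map $x\mapsto d(s_i,x)$, hence r.e. open uniformly in $\uple{i,n}$. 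Finally I must verify $\mu(B^{\uple{i,n}})+\mu(E^{\uple{i,n}})=1$: the two atoms together with the sphere $\partial B^{\uple{i,n}}$ partition $X$, and almost decidability forces $\mu(\partial B^{\uple{i,n}})=0$, since an almost decidable set is a continuity set. Thus the union of the atoms has full measure, as required by the definition of a partition.

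It then remains to see that the family generates the Borel $\sigma$-algebra. Each ball $B^{\uple{i,n}}$ is an atom of $\P_{\uple{i,n}}$, so the $\sigma$-algebra generated by all atoms of the family contains $\{B^{\uple{i,n}}:i,n\in\N\}$. As observed immediately after the previous theorem, this countable collection is a basis for the topology of $X$; since $X$ is separable and hence second countable, every open set is a countable union of such balls, so the generated $\sigma$-algebra contains all open sets and therefore coincides with the Borel $\sigma$-algebra.

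The construction is essentially an assembly step: all the genuine difficulty has already been absorbed into the almost-decidability theorem, which supplies a dense stock of balls with null boundary. The only points needing a little care are the uniform r.e.-openness of the exteriors $E^{\uple{i,n}}$, which rests on the computability of $x\mapsto d(s_i,x)$ and the uniform computability of the radii, and the step from ``the generated $\sigma$-algebra contains a basis'' to ``it is the whole Borel $\sigma$-algebra''; both are routine. Hence I do not anticipate a serious obstacle beyond invoking the preceding theorem correctly.
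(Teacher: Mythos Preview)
Your construction is exactly the paper's: the paper also takes $\P_{\uple{i,n}}=\{B(s_i,r^i_n),\,X\setminus\overline{B}(s_i,r^i_n)\}$ and observes that since the almost decidable balls form a basis, the generated $\sigma$-algebra is the Borel $\sigma$-field. Your write-up is simply more explicit about the uniform r.e.-openness of the two atoms and about the generation argument, which is fine.

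One small imprecision worth flagging: you write ``the two atoms together with the sphere $\partial B^{\uple{i,n}}$ partition $X$'' and then invoke ``almost decidable $\Rightarrow$ continuity set $\Rightarrow$ $\mu(\partial B)=0$''. In a general metric space the topological boundary $\partial B(s_i,r)=\overline{B(s_i,r)}\setminus B(s_i,r)$ can be a \emph{proper} subset of the metric sphere $\{x:d(s_i,x)=r\}$, and it is the latter that you need to have measure zero. The abstract implication ``almost decidable $\Rightarrow$ continuity set'' only gives $\mu(\partial B)=0$, which is not quite enough. The fix is immediate: the proof of the preceding theorem in fact constructs the radii $r^i_n$ so that the whole metric sphere has measure zero (the nested annuli $B_{J_k}$ shrink to that sphere with $\mu(B_{J_k})<2^{-k+1}$), and this is exactly what yields $\mu(B^{\uple{i,n}})+\mu(E^{\uple{i,n}})=1$. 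So your argument stands once you point to the construction rather than to the abstract continuity-set remark.
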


\begin{proof}
Take $\P_\uple{i,n}=\{B(s_i,r^i_n),X\setminus \overline{B}(s_i,r^i_n)\}$ where $\overline{B}$ is the closed ball: as the almost decidable balls form a
basis of the topology, the $\sigma$-algebra generated by the $P_k$ is the
Borel $\sigma$-field.
\end{proof}


\begin{proposition}\label{decidable_set_measure}
If $A$ is almost decidable then $\mu(A)$ is a computable real number.
\end{proposition}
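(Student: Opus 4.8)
The plan is to show that $\mu(A)$ is simultaneously lower semi-computable and upper semi-computable, which by definition makes it a computable real. The almost decidability of $A$ supplies two r.e. open sets $U$ and $V$ with $U\subset A$, $V\subseteq\comp{A}$ and $\mu(U)+\mu(V)=1$, and the whole point is that these three facts pin down $\mu(A)$ exactly, as $\mu(U)$ (equivalently $1-\mu(V)$).

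First I would record that the measure of any r.e. open set is lower semi-computable. If $W=\bigcup_{i\in E}B_i$ with $E$ r.e., then enumerating $E$ produces an increasing sequence of finite unions of ideal balls whose measures converge to $\mu(W)$ from below; by Theorem \ref{mu_computable} the measure of a finite union of ideal balls is lower semi-computable uniformly in the indices, so the supremum $\mu(W)$ is lower semi-computable. Applying this to $U$ and to $V$ shows that both $\mu(U)$ and $\mu(V)$ are lower semi-computable.

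Next I would use the inclusions to sandwich $\mu(A)$. From $U\subset A$ we get $\mu(U)\leq\mu(A)$, and from $V\subseteq\comp{A}$ we get $\mu(V)\leq\mu(\comp{A})=1-\mu(A)$, i.e. $\mu(A)\leq 1-\mu(V)$. Since $\mu(U)+\mu(V)=1$, the upper bound $1-\mu(V)$ equals $\mu(U)$, so the two bounds coincide and $\mu(A)=\mu(U)=1-\mu(V)$. Consequently $\mu(A)$ is lower semi-computable (being equal to $\mu(U)$) and upper semi-computable (being $1$ minus the lower semi-computable number $\mu(V)$), hence computable.

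There is essentially no obstacle here: the only ingredient beyond elementary arithmetic is the semi-computability of the measure of r.e. open sets, which is already established and used above in the construction of almost decidable balls. The entire content of the statement is that the slack between the two one-sided approximations to $\mu(A)$ collapses precisely because the hypothesis $\mu(U)+\mu(V)=1$ leaves no gap between $\mu(U)$ and $1-\mu(V)$.
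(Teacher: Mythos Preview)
Your argument is correct and matches the paper's proof essentially line for line: the paper also observes that $\mu(U)$ and $\mu(V)$ are lower semi-computable (by Theorem~\ref{mu_computable}) and that the constraint $\mu(U)+\mu(V)=1$ then forces each to be upper semi-computable as well. Your version is simply more explicit about the sandwich $\mu(U)\leq\mu(A)\leq 1-\mu(V)$ collapsing to an equality.
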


\begin{proof}
 Since $U$ and $V$ are r.e open, by theorem \ref{mu_computable} their measures
 are lower-semi-computable. As $\mu(U)+\mu(V)=1$, their measures are
 also upper-semi-computable.
\end{proof}

The following regards the computability of inducing a measure in a subset and will be used in the proof of prop. \ref{poinc}

\begin{proposition}\label{induced_measure}
Let $\mu$ be a computable measure and $A$ an almost decidable subset of
$X$. Then the induced measure $\mu_A(.)=\mu(.|A)$ is
computable. Furthermore, $R_{\mu_A}=R_\mu \cap A$.
\end{proposition}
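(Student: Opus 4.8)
The plan is to run everything through the two r.e. open sets witnessing almost decidability: fix $U,V$ with $U\subseteq A$, $V\subseteq\comp A$ and $\mu(U)+\mu(V)=1$. Since $U\subseteq A$ and $V\subseteq\comp A$ force $\mu(U)\le\mu(A)$ and $\mu(V)\le 1-\mu(A)$, the equality $\mu(U)+\mu(V)=1$ gives $\mu(U)=\mu(A)=:c$ and $\mu(V)=\mu(\comp A)$; by Proposition \ref{decidable_set_measure} the number $c$ is computable, and I assume $c>0$ so that $\mu_A$ is defined. Two facts are used repeatedly. First, $\mu(A\setminus U)=0$, hence $\mu(B\cap A)=\mu(B\cap U)$ for every Borel $B$. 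Second, $U\cup V$ is an r.e. open set of measure $\mu(U)+\mu(V)=1$ (the union is disjoint), so by Lemma \ref{included} every $\mu$-random point lies in $U\cup V$; since $V\subseteq\comp A$, a $\mu$-random point lies in $A$ if and only if it lies in $U$.

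For computability of $\mu_A$ I would use the characterization of Theorem \ref{mu_computable}: it suffices to show that $\mu_A(B_{i_1}\cup\ldots\cup B_{i_k})$ is lower semi-computable uniformly in $\uple{i_1,\ldots,i_k}$. Writing $G=B_{i_1}\cup\ldots\cup B_{i_k}$, the first fact gives $\mu_A(G)=\mu(G\cap A)/c=\mu(G\cap U)/c$. The set $G\cap U$ is r.e. open uniformly in $\uple{i_1,\ldots,i_k}$ (intersection of the uniformly r.e. open set $G$ with the fixed r.e. open set $U$), so, expressing it as an enumerable union of ideal balls and taking the supremum over finite subunions, its measure is lower semi-computable uniformly by Theorem \ref{mu_computable}. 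Dividing by the positive computable real $c$ preserves uniform lower semi-computability, which proves $\mu_A$ computable.

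The equality $R_{\mu_A}=R_\mu\cap A$ is the substantive part, and I would prove the two inclusions by transforming tests. For $R_\mu\cap A\subseteq R_{\mu_A}$: given any Martin-L\"of $\mu_A$-test $(W_n)_n$, the sets $W_n\cap U$ are uniformly r.e. open and satisfy $\mu(W_n\cap U)\le\mu(W_n\cap A)=c\,\mu_A(W_n)<c\,2^{-n}\le 2^{-n}$, so $(W_n\cap U)_n$ is a $\mu$-test. If $x$ is $\mu$-random and lies in $A$ then $x\in U$ by the second fact, and $x\notin\bigcap_n(W_n\cap U)=(\bigcap_n W_n)\cap U$, whence $x\notin\bigcap_n W_n$; as $(W_n)$ was arbitrary, $x\in R_{\mu_A}$. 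For $R_{\mu_A}\subseteq R_\mu\cap A$: using that $c>0$ is computable, pick an integer $m$ with $2^{-m}\le c$; given any $\mu$-test $(U_n)_n$, set $W_n=U_{n+m}$. These are uniformly r.e. open and $\mu_A(W_n)=\mu(U_{n+m}\cap A)/c\le\mu(U_{n+m})/c<2^{-(n+m)}/c\le 2^{-n}$, so $(W_n)$ is a $\mu_A$-test with $\bigcap_n W_n\supseteq\bigcap_n U_n$. Hence a $\mu_A$-random point passes every $\mu$-test and is $\mu$-random. Finally the constant sequence $W_n=V$ is a $\mu_A$-test, since $\mu_A(V)=\mu(V\cap A)/c=0$ (as $V\cap A=\emptyset$), so every $\mu_A$-random point avoids $V$; being $\mu$-random it lies in $U\cup V$, hence in $U\subseteq A$. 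This yields $R_{\mu_A}\subseteq R_\mu\cap U\subseteq R_\mu\cap A$ and completes the equality.

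The only delicate points are bookkeeping ones: keeping the factor $c\le 1$ (respectively the shift $m$) pushing the measure bounds below $2^{-n}$ in the correct direction, and noticing that replacing the possibly non-open set $A$ by the genuinely r.e. open set $U$ is harmless because $A\setminus U$ is $\mu$-null. I expect the main conceptual step to be the observation that $\mu$-randomness already confines points to $U\cup V$; this is exactly what lets one substitute $U$ for $A$ throughout and thereby localize $\mu_A$-randomness inside $A$.
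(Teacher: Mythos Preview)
Your proof is correct and follows essentially the same route as the paper. The computability argument is identical (replace $A$ by $U$, then $G\cap U$ is uniformly r.e.\ open and division by the computable $c$ preserves lower semi-computability). For the randomness equality the paper is slightly more terse: instead of transforming arbitrary tests it uses the universal test together with the domination $\mu_A\le\frac{1}{c}\mu$ to get $R_{\mu_A}\subseteq R_\mu$, and invokes Lemma~\ref{included} for $\mu_A$ applied to the full-$\mu_A$-measure set $U$ to get $R_{\mu_A}\subseteq U$; your explicit shift-by-$m$ test and the constant test $W_n=V$ achieve exactly the same conclusions by the same underlying mechanism.
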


\begin{proof}
let $W=B_{n_1}\cup\ldots\cup B_{n_k}$ be a finite union of ideal balls. $\mu_A(W)=\mu(W\cap A)/\mu(A)=\mu(W \cap U)/\mu(A)$. $W\cap U$ is a r.e open set, so its measure is lower semi-computable. As $\mu(A)$ is computable, $\mu_A(W)$ is lower semi-computable. Note that everything is uniform in $\uple{n_1,\ldots,n_k}$. The result follows from theorem \ref{mu_computable}.

Let $U$ and $V$ as in the definition of an almost decidable set. First note
that $R_\mu\cap A=R_\mu\cap U$, as $R_\mu\subseteq U\cup V$ by lemma
\ref{included}. Again by lemma \ref{included}, $R_{\mu_A}\subseteq U$, and
as $\mu_A\leq \frac{1}{\mu(A)}\mu$, every $\mu$-effective null set is also
a $\mu_A$-effective null set, so $R_{\mu_A}\subseteq R_\mu$. Hence, we have
$R_{\mu_A}\subseteq R_\mu\cap U$.

Conversely, $\comp{R}_{\mu_A}$ being a $\mu_A$-effective null set, its
intersection with $U$ is a $\mu$-effective null set, by definition of
$\mu_A$. So $\comp{R}_{\mu_A}\cap U\subseteq \comp{R}_\mu$, which is
equivalent to $R_\mu\cap U \subseteq R_{\mu_A}$.
\end{proof}


\subsection{Some statistical properties of random points}\label{secbir}

With the tools developed so far, it is possible to translate many results of the form

$$
 \mu\{x: P(x)\}=1,
$$

with $P$ some predicate, into an ``individual'' result of the  form:

$$
\mbox{``If }x\mbox{ is }\mu\mbox{-random, then }P(x)\mbox{''}.
$$

In this section we give two examples: recurrence and statistical typicality.

\begin{definition}Let $X$ be a metric space. A point $x\in X$ is said to be \defin{recurrent} for a transformation $T:X\to X$, if $\liminf_n d(x,T^n x)=0$.
\end{definition}

\begin{proposition}[Random points are recurrent]\label{poinc}
Let $(X,\mu)$ be a computable probability space. If $x$ is $\mu$-random, then it is recurrent with respect to {\em every } measure preserving endomorphism $T$ on $(X,\mu)$.
\end{proposition}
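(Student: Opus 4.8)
The plan is to show that the set of $\mu$-random points which fail to be recurrent is an effective $\mu$-null set, contradicting randomness. First I would note that since $(X,\mu)$ is a computable probability space and $T$ is an endomorphism of it (hence, by definition, measure-preserving and computable on a constructive $G_\delta$-set $D$ of measure one), Proposition \ref{morphism_random} applies: every $\mu$-random $x$ lies in $D$ and $T(x)$ is again $\mu$-random. Iterating (the iterates $T^n$ are again morphisms, with full-measure domain $\bigcap_{k<n}T^{-k}D$ as in the proof of Theorem \ref{symbolic}), every forward iterate $T^n x$ of a random point is random.

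Fix the countable basis of almost decidable balls $B^{\uple{i,n}}=B(s_i,r^i_n)$. For each such ball $B$, write $U,V$ for the witnessing r.e. open sets with $U\subseteq B$, $V\subseteq \comp{B}$, $\mu(U)+\mu(V)=1$; by Proposition \ref{decidable_set_measure}, $\mu(U)$ is computable. Put
$$
G_B^{(N)}=U\cap\bigcap_{n=1}^N T^{-n}(V).
$$
Because $T$ is computable on $D$, each $T^{-n}(V)$ agrees on $D$ with an r.e. open set uniformly in $n$, so $G_B^{(N)}$ is (modulo the null set $\comp{D}$) r.e. open, uniformly in the ball-index and in $N$. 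I would compare $G_B^{(N)}$ with the non-return set $W^{(N)}=U\setminus\bigcup_{n=1}^N\big(U\cap T^{-n}(U)\big)$: since $V\subseteq\comp{B}\subseteq\comp{U}$ we have $G_B^{(N)}\subseteq W^{(N)}$, hence $\mu(G_B^{(N)})\le\mu(W^{(N)})$. The key measure estimate is twofold. By the classical Poincar\'e recurrence theorem the set $\bigcap_N W^{(N)}$ of points of $U$ never returning to $U$ is $\mu$-null, so $\mu(W^{(N)})\downarrow 0$; and since $\mu(U)$ is computable while $U\cap\bigcup_{n\le N}T^{-n}(U)$ is r.e. open (so of lower-semicomputable measure), $\mu(W^{(N)})$ is \emph{upper}-semicomputable, uniformly.

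With these two facts I would assemble the universal test. Enumerating the almost decidable balls as $(B_\ell)_\ell$, upper-semicomputability together with $\mu(W^{(N)})\to0$ lets me effectively pick, for each $m,\ell$, an index $N_{m,\ell}$ with $\mu\big(G_{B_\ell}^{(N_{m,\ell})}\big)\le\mu(W^{(N_{m,\ell})})<2^{-(m+\ell+2)}$. Setting $\mathcal W_m=\bigcup_\ell G_{B_\ell}^{(N_{m,\ell})}$ gives a uniformly r.e. open sequence with $\mu(\mathcal W_m)<2^{-m}$, i.e. a Martin-L\"of $\mu$-test (after replacing each $\mathcal W_m$ by the genuine r.e. open set agreeing with it off the null set $\comp{D}$). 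It remains to check that every random non-recurrent point is trapped in $\bigcap_m\mathcal W_m$: if $x$ is random with $\liminf_n d(x,T^n x)=c>0$, choose an almost decidable ball $B_\ell\ni x$ of radius $<c/2$; then every $T^n x$ (for $n\ge1$) lies outside $B_\ell$, and being random it lies in $U_\ell\cup V_\ell$ (Lemma \ref{included}), forcing $T^n x\in V_\ell$, while $x\in U_\ell$. Thus $x\in\bigcap_N G_{B_\ell}^{(N)}\subseteq\bigcap_m\mathcal W_m$, contradicting the randomness of $x$.

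The main obstacle is the effectiveness bookkeeping concentrated in the measure estimate: one must simultaneously know that $\mu(W^{(N)})\to0$ (pure measure theory, Poincar\'e) and that this quantity is upper-semicomputable uniformly in the ball and in $N$, the latter relying on $\mu(U)$ being computable (almost-decidability) and on $\bigcup_{n\le N}T^{-n}(U)$ being r.e. open (computability of $T$ on its full-measure domain). The translation between ``computable on $D$'' and genuinely r.e. open sets---harmless because $\comp{D}$ is null and random points avoid it---must be handled with care at every step. Alternatively, one can pass to the induced space $(B,\mu_B)$: by Proposition \ref{induced_measure}, $\mu_B$ is computable and $R_{\mu_B}=R_\mu\cap B$, so the measures $\mu_B(W^{(N)})$ become computable and the same test construction goes through inside each ball.
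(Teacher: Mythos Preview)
Your argument is essentially correct, but there is one small slip: from $\liminf_n d(x,T^nx)=c>0$ you cannot conclude that \emph{every} $T^nx$ with $n\ge 1$ lies outside a ball of radius $<c/2$; the first few iterates may come closer. The fix is standard: non-recurrence forces $x$ to be aperiodic, so $d(x,T^nx)>0$ for all $n\ge 1$; take $\epsilon'=\min\bigl(c/2,\,\min_{1\le n<N_0}d(x,T^nx)\bigr)$ where $N_0$ is such that $d(x,T^nx)>c/2$ for $n\ge N_0$, and choose $B_\ell$ of radius $<\epsilon'/2$.

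Your route, however, is genuinely different from the paper's. You build an explicit global Martin-L\"of test by exploiting upper-semicomputability of the non-return measure $\mu(W^{(N)})$ together with its vanishing (classical Poincar\'e), and then summing over a basis of almost decidable balls. The paper instead works one ball at a time and avoids any test construction: for an almost decidable neighborhood $B$ of $x$ it simply observes that $\bigcup_{n\ge 1}T^{-n}B$ is r.e.\ open (on the domain $D$) and has full $\mu_B$-measure by Poincar\'e; since $x\in R_{\mu_B}$ by Proposition~\ref{induced_measure}, Lemma~\ref{included} puts $x$ in that set, so $x$ returns to $B$. Your approach is more hands-on and shows explicitly the ML-test witnessing the null set of non-recurrent points; the paper's is a three-line argument that delegates all the work to Proposition~\ref{induced_measure} and Lemma~\ref{included}. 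You actually mention passing to $(B,\mu_B)$ as an alternative at the end, but the paper uses it more aggressively: once you are in the conditional space there is no need to estimate any $W^{(N)}$ at all.
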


\begin{proof}
take $x\in R_\mu$ and $B$ an almost decidable neighborhood of $x$. Then
$\mu(B)>0$ and there is a r.e open set $U$ such that:
$$
\bigcup_{n\geq 1}T^{-n}B = U\cap D
$$
where $D$ is the domain of computability of $T$. By the Poincar\'e
recurrence theorem, this set has full measure for $\mu_B(.)=\mu(.|B)$. By
proposition \ref{induced_measure}, $x\in R_{\mu_B}$, so by lemma
\ref{included}, $x$ is in $U$.
\end{proof}

We now prove that random points satisfy a stronger property to be used in the sequel: statistical typicality. Let us then introduce this concept.

Let $X$ be a metric space and $T$ be a continuous transformation on $X$. Let $C_b(X)$ be the space of bounded real-valued continuous functions on $X$. For $f\in C_b(X)$ define:

\begin{equation}\label{limit}
\overline{f}(x):=\lim_{n \to \infty} \frac{1}{n} \sum_{j=0}^{n-1}f(T^jx)
\end{equation}

at the points $x$ where this limit exists. We recall that a point $x$ is
called \defin{generic} for $T$ if $\overline{f}(x)$ is defined for every
$f\in C_b(X)$.

Every generic point $x$ generates a probability measure $\mu_x$ which is
invariant for $T$, dually defined by:

\begin{equation}
\int_X f d\mu_x = \overline{f} (x) \mbox{ for all $f\in C_b(X)$}.
\end{equation}

In other words, $x$ is generic if the measure
$\nu_n=\frac{1}{n}\sum_{j<n}\delta_{T^j x}$ converges weakly to
$\mu_x$, where $\delta_y$ is the Dirac probability measure concentrated on $y$. Let $\mu$ be an ergodic measure for $T$. A generic point $x$ is
said to be \defin{$\boldsymbol{\mu}$-typical} if $\mu_x=\mu$. The
well-known Birkhoff ergodic theorem says that for each ergodic measure
$\mu$, the set of $\mu$-typical points has $\mu$-measure one.

From a statistical point of view, $\mu$-typical points are those whose orbits  reproduce the main statistical features of $\mu$ (in particular they are a total measure set), hence in some sense they are \emph{random} for the dynamic.

\bigskip

What algorithmically random points have to do with dynamically random points?

\bigskip

This problem has already been studied by V'yugin
(\cite{Vyu97}) in the particular case of the Cantor space and for computable
observables. We prove a general version which applies to computable
dynamics on any computable probability space, for any bounded continuous (not necessarily computable) observable. The strategy is simple: we
use computable partitions to construct effective symbolic models and use the following particular case of V'yugin's main theorem.

\begin{lemma}\label{vyugin} Let $\mu$ be a computable shift-invariant ergodic measure on the Cantor space $\{0,1\}^\omega$. Then for each $\mu$-random sequence $\omega$:
\begin{equation}
\lim_n \frac{1}{n} \sum_{i=0}^n \omega_i = \mu([1])
\end{equation}
\end{lemma}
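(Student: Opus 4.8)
The plan is to read this as the Birkhoff ergodic theorem for the single bounded (indeed computable) observable $f=\mathbf{1}_{[1]}$, and to upgrade its ``almost every $\omega$'' conclusion to ``every $\mu$-random $\omega$'' by exhibiting the exceptional sets as effective $\mu$-null sets. Write $A_n(\omega)=\frac{1}{n}\sum_{i=0}^{n-1}\mathbf{1}_{[1]}(\sigma^i\omega)=\frac{1}{n}\sum_{i=0}^{n-1}\omega_i$ and $c=\mu([1])$. Two structural facts make the situation tractable. Since $\mu$ is computable, $c$ is a computable real (Theorem \ref{mu_computable}); and $A_n$ is constant on cylinders of length $n$, so for rational $a$ the set $\{\omega:A_n(\omega)>a\}$ is an effectively computable finite union of cylinders, i.e.\ a clopen set whose measure is computable uniformly in $n,a$. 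Hence every set $\bigcup_{n\ge N}\{|A_n-c|>\epsilon\}$ is r.e.\ open, uniformly in $N$ and in the rational $\epsilon>0$. The goal splits into showing (a) that $A_n(\omega)$ converges for every $\mu$-random $\omega$, and (b) that its limit is $c$.

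The uniform device behind both parts is the notion of an \emph{integrable test}: a lower semicomputable $t\ge0$ with $\int t\,d\mu<\infty$. Such a $t$ yields $U_k=\{t>2^k\}$, which is r.e.\ open uniformly in $k$ with $\mu(U_k)\le 2^{-k}\int t\,d\mu$ by Markov's inequality, so $(U_k)_k$ is (after rescaling) a Martin-L\"of test and $\{t=\infty\}$ is an effective null set; every $\mu$-random $\omega$, lying in no effective null set, therefore satisfies $t(\omega)<\infty$. The main obstacle is that the naive approach to (a) fails: writing the non-convergence set as $\bigcap_N V_N$ with $V_N=\bigcup_{n\ge N}\{|A_n-c|>\epsilon\}$, the $V_N$ are r.e.\ open and decrease to a null set, but to build the test directly I would need to compute from $k$ an $N(k)$ with $\mu(V_{N(k)})<2^{-k}$, whereas $\mu(V_N)$ is only lower semicomputable and the rate at which $\mu(V_N)\downarrow0$ is \emph{not} computable from $\mu$ in general. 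This is the technical heart of V'yugin's theorem, and the way past it is to trade the non-computable convergence rate for the rate-free a priori inequalities of ergodic theory.

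For part (a) I would build $t$ from upcrossing counts. For rationals $\alpha<\beta$, let $U_{\alpha,\beta}(\omega)$ be the number of upcrossings of $[\alpha,\beta]$ by the sequence $n\mapsto A_n(\omega)$. Each completed upcrossing is witnessed by finitely many bits of $\omega$ (the $A_n$ being computable clopen data), so $U_{\alpha,\beta}$ is lower semicomputable, and the ergodic upcrossing inequality bounds $\int U_{\alpha,\beta}\,d\mu$ by a computable constant of the form $C/(\beta-\alpha)$ that involves only $\|f\|_\infty$ and $\alpha$, and in particular no convergence rate. Setting $t=\sum_{\alpha<\beta}w_{\alpha,\beta}\,U_{\alpha,\beta}$ with summable weights chosen to beat $1/(\beta-\alpha)$ (enumerate the pairs and take $w_j=2^{-j}(\beta_j-\alpha_j)/C$) produces a lower semicomputable $t$ of finite mean that is infinite precisely when $A_n(\omega)$ fails to converge. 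Hence $A_n(\omega)$ converges at every $\mu$-random $\omega$.

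It remains to identify the limit with $c$. The limit function $\bar f$ is $\sigma$-invariant and, by bounded convergence and $\sigma$-invariance of $\mu$, satisfies $\int\bar f\,d\mu=\lim_n\int A_n\,d\mu=c$; ergodicity then forces $\bar f=c$ $\mu$-almost everywhere. Upgrading this from ``almost every'' to ``every'' random point is the second delicate step: since the shift is a computable measure-preserving morphism, it preserves randomness (Proposition \ref{morphism_random}), so $\bar f$ is genuinely $\sigma$-invariant on $R_\mu$, and one completes the argument with an effective zero-one law combining this invariance with the one-sided maximal ergodic inequality applied to $\mathbf{1}_{[1]}-(c\pm\epsilon)$, which rules out a wrong constant limit on $R_\mu$. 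The routine parts left are checking lower semicomputability and the weight bookkeeping; as emphasized, the genuine difficulty throughout is replacing the non-computable convergence rate by the computable a priori upcrossing and maximal bounds.
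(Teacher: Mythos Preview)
The paper does not prove this lemma: it is quoted without proof as ``the following particular case of V'yugin's main theorem'' and used as a black box to derive Theorem~\ref{birkhoff_theorem}. So there is no argument in the paper to compare against; your proposal is an attempt to reconstruct V'yugin's proof.

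Your part~(a) is essentially correct and is indeed V'yugin's method: Bishop's upcrossing inequality for ergodic averages gives an \emph{a priori} bound on $\int U_{\alpha,\beta}\,d\mu$ depending only on $\|f\|_\infty$ and $\beta-\alpha$, not on any convergence rate, so the weighted sum $t=\sum_j w_j U_{\alpha_j,\beta_j}$ is a lower semicomputable function with finite integral, hence finite at every random $\omega$, forcing convergence of $(A_n(\omega))_n$.

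Part~(b), however, has a genuine gap. Finiteness of all upcrossing counts tells you only that $(A_n(\omega))_n$ converges, not to what: if $\bar f(\omega)=L>c$, then for rationals $c<\alpha<\beta<L$ the sequence eventually stays above $\beta$, so $U_{\alpha,\beta}(\omega)$ is finite and the upcrossing test does not detect $\omega$. You recognise this and propose to combine $\sigma$-invariance of $\bar f$ on $R_\mu$, an ``effective zero-one law'', and the maximal ergodic inequality. But none of these ingredients closes the gap. Shift-invariance of $\bar f$ on $R_\mu$ does not make it constant there; ergodicity gives constancy only almost everywhere, so speaking of ``a wrong constant limit on $R_\mu$'' presupposes what is to be proved. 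Hopf's maximal inequality applied to $g=f-(c+\epsilon)$ yields $\int_E g\,d\mu\geq 0$ for $E=\{\sup_n A_n f>c+\epsilon\}$, which gives no decay of $\mu\{\sup_{n\geq N}A_n f>c+\epsilon\}$ as $N\to\infty$; any useful bound here would be precisely a rate-of-convergence estimate, which you yourself noted is unavailable. And there is no general ``effective zero-one law'' asserting that a shift-invariant $\Pi^0_2$ null set is effectively null. In short, you successfully replaced the missing rate in~(a) by the rate-free upcrossing bound, but in~(b) you have not supplied any analogous rate-free inequality, and the sketch as written does not identify the limit. This step is genuinely delicate in V'yugin's paper and requires more than what you have outlined.
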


We are now able to prove:

\begin{theorem}\label{birkhoff_theorem}
Let $(X,\mu)$ be a computable probability space. Then each $\mu$-random point $x$ is $\mu$-typical for {\em every} ergodic endomorphism $T$.
\end{theorem}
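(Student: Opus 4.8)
The plan is to reduce the statement to V'yugin's Lemma \ref{vyugin} by coding the dynamics through binary computable partitions built from almost decidable balls. Fix a $\mu$-random point $x$ and an ergodic measure-preserving endomorphism $T$, and write $\nu_n=\frac1n\sum_{j<n}\delta_{T^jx}$. I want to show $\nu_n\to\mu$ weakly. By Proposition \ref{prop_portmanteau} this will follow once I exhibit a countable basis $\A$ of the topology, closed under finite unions, with $\nu_n(A)\to\mu(A)$ for every $A\in\A$.

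The natural candidate for $\A$ is the family of finite unions of almost decidable balls $B^{\uple{i,n}}$: these form a basis of the topology, and the first step is to check that this family consists of almost decidable sets. Given almost decidable $A_1,A_2$ with witnesses $(U_1,V_1),(U_2,V_2)$, I claim the pair $(U_1\cup U_2,\ V_1\cap V_2)$ witnesses that $A_1\cup A_2$ is almost decidable; both sets are r.e. open with the right inclusions, and the only thing to verify is $\mu(U_1\cup U_2)+\mu(V_1\cap V_2)=1$. This holds because $(A_1\cup A_2)\setminus(U_1\cup U_2)$ and $\comp{(A_1\cup A_2)}\setminus(V_1\cap V_2)$ are each contained in null sets of the form $(A_i\setminus U_i)\cup(A_j\setminus V_j)$. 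By induction every $A\in\A$ is almost decidable, hence a $\mu$-continuity set, and comes with witnesses $U_A\subseteq A\subseteq\comp{V_A}$ satisfying $\mu(U_A)=\mu(A)$ and $\mu(V_A)=\mu(\comp A)$.

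For a fixed $A\in\A$, consider the two-element computable partition $\P=\{U_A,V_A\}$, which is legitimate since $U_A,V_A$ are disjoint r.e. open sets with $\mu(U_A\cup V_A)=1$; label $U_A$ by the symbol $1$ and $V_A$ by $0$. By Theorem \ref{symbolic} the associated symbolic model is effective, so $\phi_\P:X\to\{0,1\}^\N$ is a morphism of CPS intertwining $T$ with the shift $\sigma$. Then $\mu_\P$ is computable by Proposition \ref{transfer}, shift-invariant because $T$ preserves $\mu$, and ergodic since it is a factor of the ergodic system $(X,\mu,T)$. Proposition \ref{morphism_random} gives that $x$ lies in the domain of $\phi_\P$ and that $\omega=\phi_\P(x)$ is $\mu_\P$-random, so Lemma \ref{vyugin} applies and yields $\frac1n\sum_{i<n}\omega_i\to\mu_\P([1])$. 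As $\omega_i=1\iff T^ix\in U_A$, this reads $\nu_n(U_A)\to\mu_\P([1])=\mu(U_A)=\mu(A)$, and the complementary frequency $\frac1n\sum_{i<n}(1-\omega_i)=\nu_n(V_A)$ converges to $\mu_\P([0])=\mu(\comp A)$.

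Finally I combine these through the inclusions $U_A\subseteq A\subseteq\comp{V_A}$, which give $\nu_n(U_A)\le\nu_n(A)\le 1-\nu_n(V_A)$ for every $n$; the left side tends to $\mu(A)$ and the right side to $1-\mu(\comp A)=\mu(A)$, so $\nu_n(A)\to\mu(A)$ for each $A\in\A$, and Proposition \ref{prop_portmanteau} concludes that $\nu_n\to\mu$ weakly, i.e. $x$ is $\mu$-typical. The work here is conceptual rather than computational: V'yugin's lemma carries all the analytic weight, so the delicate part is the bookkeeping that makes it applicable — namely verifying that each binary coding produces a genuinely effective, computable, shift-invariant and \emph{ergodic} symbolic measure (the factor-of-ergodic-is-ergodic step), together with the closure of almost decidable sets under finite unions needed to supply a basis of continuity sets. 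I expect these two verifications to be the main obstacles, after which lifting single-set frequency convergence to full weak convergence is routine.
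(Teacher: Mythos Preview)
Your proof is correct and follows essentially the same route as the paper's: reduce to V'yugin's lemma via the effective symbolic model attached to the binary computable partition $\{U_A,V_A\}$ coming from an almost decidable set, then apply Proposition \ref{prop_portmanteau} to the basis of finite unions of almost decidable balls. You have in fact been more careful than the paper on two points it leaves implicit --- the closure of almost decidable sets under finite unions, and the ergodicity of the factor measure $\mu_\P$ --- and your sandwich $\nu_n(U_A)\le\nu_n(A)\le 1-\nu_n(V_A)$ is a clean alternative to the paper's tacit use of the fact that every iterate $T^ix$ already lies in $U_A\cup V_A$ (so that $\nu_n(U_A)=\nu_n(A)$ directly); note only the small slip that the null sets bounding $\comp{(A_1\cup A_2)}\setminus(V_1\cap V_2)$ are $(\comp{A_j}\setminus V_j)$, not $(A_j\setminus V_j)$.
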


We remark that the theorem holds uniformly for all bounded continuous observables and all ergodic endomorphisms.

\begin{proof}  Let $f_A$ be the characteristic function of the set $A$. First, let us show that if $A$ is an almost decidable set
 then for all $\mu$-random point $x$:

\begin{equation}\label{birk}
\lim_n \frac{1}{n} \sum_{i=0}^n f_A\circ T^i(x) = \mu(A)
\end{equation}

Indeed, consider the \emph{computable partition} defined by $\P:=\{U,V\}$
with $U$ and $V$ as in definition \ref{almost_decidable} and the associated
symbolic model $(X_{\P},\sigma,\mu_{\P})$. By proposition \ref{symbolic},
$\phi_{\P}(x)$ is a well defined $\mu_{\P}$-random infinite sequence, so
lemma \ref{vyugin} applies and gives (\ref{birk}). This can be reformulated
as the convergence of $\nu_n(A)$  to $\mu(A)$. Now, the collection
of almost decidable sets satisfies proposition \ref{prop_portmanteau}, so
$\nu_n$ converges weakly to $\mu$: $x$ is $\mu$-typical.
\end{proof}

\section{Measure-theoretic entropies}\label{section_measure_entropy}
Suppose discrete objects (symbolic strings for instance) are produced by
some source. The tendency of the source toward producing such object more
than such other can be modeled by a probability distribution, which gives
more information than the crude set of possible outcomes. The Shannon
entropy of the probabilistic source measures the degree of uncertainty
that lasts when taking the probability distribution into account.

Any ergodic dynamical system $(X,T,\mu)$ can be seen as a source of
outputs. Kolmogorov and Sina\"i adapted Shannon's theory to dynamical
systems in order to measure the degree of unpredictability or chaoticity of an
ergodic system. The first step consists in discretizing the space $X$ using
finite partitions. Let $\xi=\{C_1,\ldots,C_n\}$ be a finite measurable
partition of $X$. Then let $T^{-1}\xi$ be the partition whose atoms are the
pre-images $T^{-1}C_i$. Then let
\[
\xi_n=\xi \vee T^{-1}\xi \vee T^{-2}\xi \vee \ldots \vee T^{-(n-1)}\xi
\]
be the partition given by the sets of the form
\[
C_{i_0}\cap T^{-1}C_{i_1}\cap \ldots \cap T^{-(n-1)}C_{i_{n-1}},
\]
varying $C_{i_j}$ among all the atoms of $\xi$. Knowing which atom $\xi_n$
a point $x$ belongs to comes to knowing which atoms of the partition
$\xi$ the orbit of $x$ visits up to time $n-1$.

The measure-theoretical entropy of the system w.r.t the partition $\xi$ can
then be thought as the rate (per time unit) of gained information (or
removed uncertainty) when observations of the type ``$T^{n}(x)\in C_{i}$''
are performed. This is of great importance when classifying dynamical
systems: it is a measure-theoretical invariant, which enables one to
distinguish non-isomorphic systems.

We briefly recall the definition. For more details, we refer the reader to
\cite{Bil65}, \cite{Wal82}, \cite{Pet83}, \cite{HK95}.

\subsection{Entropy with Shannon information}\label{entropy}
Given a partition $\xi$ and a point $x$, $\xi(x)$ denotes the atom of the
partition $x$ belongs to. Let us consider the \defin{Shannon information
function} relative to the partition $\xi_n$ (the information which is
gained by observing that $x\in \xi_n(x)$),
$$
I_\mu(x|\xi_n):=-\log \mu( \xi_n(x))$$
and its mean, the entropy of the partition $\xi_n$,
$$
H_\mu(\xi_n):=\int_X I_\mu(.|\xi_n)d\mu = \sum_{C\in\xi_n}-\mu(C)\log \mu(C)
$$

The \defin{measure-theoretical} or \defin{Kolmogorov-Sina\"i entropy} of
$T$ relative to the partition $\xi$ is defined as:
\[
h_{\mu}(T,\xi)=\lim_{n \rightarrow \infty} \frac{1}{n} H_{\mu}(\xi_n).
\]
(which exists and is an infimum, since the sequence $H_{\mu}(\xi_n)_n$ is
sub-additive). With the Shannon information function, it is possible to define a
kind of point-wise notion of entropy with respect to a partition
$\xi$:
$$
\limsup_n \frac{1}{n}I_{\mu}(x|\xi_n).
$$
This local entropy is related to the global entropy of the system by the
celebrated Shannon-McMillan-Breiman theorem:
\begin{theorem_star}[Shannon-McMillan-Breiman]
Let $T$ be an ergodic endomorphism of the probability space
$(X,\mathscr{B},\mu)$ and $\xi$ a finite measurable partition. Then for
$\mu$-almost every $x$,
\begin{equation}\label{shannon_mcmillan}
\lim_{n\to\infty}\frac{1}{n}I_\mu(x|\xi_n)=h_\mu(T,\xi).
\end{equation}
The convergence also holds in $L^1(X,\mathscr{B},\mu)$.
\end{theorem_star}

Now we suppress the partition-dependency: the \defin{Kolmogorov-Sina\"i entropy} of $(X,T,\mu)$ is
\[
h_{\mu}(T):=\sup\{h_{\mu}(T,\xi):\xi \text{  finite measurable partition}\}
\]

We recall the following two results that we will need
later. The first proposition follows directly from the definitions.

\begin{proposition}\label{sym}
If $(\Sigma^\N,\mu_{\xi},\sigma)$ is the symbolic model associated to $(X,\mu,T,\xi)$ then $h_{\mu}(T,\xi)=h_{\mu_{\xi}}(\sigma)$.
\end{proposition}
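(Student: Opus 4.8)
The plan is to compare the two entropies through the canonical cylinder partition of the symbolic space and then use the Kolmogorov--Sina\"i generator theorem to remove the partition-dependence on the shift side. Write $\Sigma=\{1,\ldots,k\}$ and let $\eta=\{[1],\ldots,[k]\}$ be the partition of $\Sigma^\N$ into first-coordinate cylinders; following the general definition, set $\eta_n=\eta\vee\sigma^{-1}\eta\vee\cdots\vee\sigma^{-(n-1)}\eta$, whose atoms are exactly the length-$n$ cylinders $[a_0\cdots a_{n-1}]$.

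First I would establish that $\phi_\xi$ carries $\xi_n$ onto $\eta_n$ atom by atom, with matching masses. By the definition of $\phi_\xi$, one has $\phi_\xi^{-1}([a_0\cdots a_{n-1}])=\{x:T^jx\in C_{a_j},\ 0\le j<n\}=C_{a_0}\cap T^{-1}C_{a_1}\cap\cdots\cap T^{-(n-1)}C_{a_{n-1}}$, which is precisely the atom of $\xi_n$ indexed by $(a_0,\ldots,a_{n-1})$. Since $\mu_\xi$ is the push-forward of $\mu$ under $\phi_\xi$, each atom of $\eta_n$ has the same $\mu_\xi$-measure as the corresponding atom of $\xi_n$ has $\mu$-measure. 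Hence $H_{\mu_\xi}(\eta_n)=H_\mu(\xi_n)$ for every $n$, and dividing by $n$ and letting $n\to\infty$ yields $h_{\mu_\xi}(\sigma,\eta)=h_\mu(T,\xi)$. This is the ``direct from the definitions'' part.

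It then remains to show $h_{\mu_\xi}(\sigma)=h_{\mu_\xi}(\sigma,\eta)$, i.e. that the supremum defining $h_{\mu_\xi}(\sigma)$ is attained at $\eta$. This is exactly the content of the Kolmogorov--Sina\"i generator theorem: if a finite partition $\zeta$ satisfies $\bigvee_{j\ge 0}\sigma^{-j}\zeta=\mathscr{B}$ (mod $0$), then $h_\nu(\sigma)=h_\nu(\sigma,\zeta)$ for any $\sigma$-invariant $\nu$. The coordinate partition $\eta$ is such a generator for the one-sided shift, because the cylinders $[a_0\cdots a_{n-1}]=\bigcap_{j<n}\sigma^{-j}[a_j]$ form a basis of the topology of $\Sigma^\N$ and therefore generate its Borel $\sigma$-algebra. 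Applying the theorem with $\nu=\mu_\xi$ and $\zeta=\eta$ gives $h_{\mu_\xi}(\sigma)=h_{\mu_\xi}(\sigma,\eta)$, and combining with the previous paragraph completes the proof.

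The only genuinely nontrivial input is the generator theorem. I expect the one point requiring care to be the verification that $\eta$ is a one-sided generator: the shift here is an endomorphism of $\Sigma^\N$ indexed by $\N$, so one must use only the nonnegative powers $\sigma^{-j}$, and the argument relies on the fact that the forward coordinates $\omega_0,\omega_1,\ldots$ already determine a point of $\Sigma^\N$. Note also that no injectivity of $\phi_\xi$ is needed, since $h_\mu(T,\xi)$ depends on the data $(X,\mu,T,\xi)$ only through the distribution $\mu_\xi$ of symbolic itineraries.
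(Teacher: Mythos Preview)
Your argument is correct and is precisely the standard one the paper has in mind: the paper gives no detailed proof, merely stating that the proposition ``follows directly from the definitions,'' and your write-up makes explicit the two ingredients (the equality $H_{\mu_\xi}(\eta_n)=H_\mu(\xi_n)$ via the push-forward, and the Kolmogorov--Sina\"i generator theorem applied to the coordinate partition of $\Sigma^\N$).
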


The next proposition is taken from \cite{Pet83}:

\begin{proposition}\label{petersen}
If $(\xi_i)_{i\in\N}$ is a family  of finite measurable partitions which
generates the Borel $\sigma$-field up to sets of measure 0, then
$h_{\mu}(T)=\sup_ih_{\mu}(T,\xi_0\vee...\vee\xi_i)$.
\end{proposition}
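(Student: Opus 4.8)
The plan is to reduce the supremum defining $h_\mu(T)$ over \emph{all} finite measurable partitions to the supremum over the increasing family $\eta_i := \xi_0\vee\cdots\vee\xi_i$. First I would observe that each $\eta_i$ is itself a finite measurable partition, so the inequality $\sup_i h_\mu(T,\eta_i)\leq h_\mu(T)$ is immediate from the definition of $h_\mu(T)$ as a supremum over all finite partitions. Moreover $\eta_0\leq\eta_1\leq\cdots$ is an increasing (refining) sequence and $\bigvee_i\eta_i=\bigvee_i\xi_i$ generates the Borel $\sigma$-field up to measure zero; since $h_\mu(T,\cdot)$ is monotone under refinement, the sequence $h_\mu(T,\eta_i)$ is nondecreasing and its supremum equals its limit. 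All the content is therefore in the reverse inequality: for an arbitrary finite measurable partition $\alpha$, I must bound $h_\mu(T,\alpha)$ by $\sup_i h_\mu(T,\eta_i)$.

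For this I would invoke the standard comparison estimate from information theory: for any two finite partitions $\alpha,\beta$,
\[
h_\mu(T,\alpha)\leq h_\mu(T,\beta)+H_\mu(\alpha\,|\,\beta),
\]
where $H_\mu(\alpha\,|\,\beta)=\sum_{B\in\beta}\mu(B)\sum_{A\in\alpha}-\tfrac{\mu(A\cap B)}{\mu(B)}\log\tfrac{\mu(A\cap B)}{\mu(B)}$ is the conditional entropy. This estimate is derived by applying the static subadditivity $H_\mu(\gamma)\leq H_\mu(\delta)+H_\mu(\gamma\,|\,\delta)$ to the refined partitions $\bigvee_{k=0}^{n-1}T^{-k}\alpha$ and $\bigvee_{k=0}^{n-1}T^{-k}\beta$, using subadditivity of conditional entropy together with $T$-invariance of $\mu$ to get $H_\mu(\bigvee_{k}T^{-k}\alpha\,|\,\bigvee_k T^{-k}\beta)\leq n\,H_\mu(\alpha\,|\,\beta)$, then dividing by $n$ and letting $n\to\infty$. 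Applying this with $\beta=\eta_i$ gives $h_\mu(T,\alpha)\leq h_\mu(T,\eta_i)+H_\mu(\alpha\,|\,\eta_i)$ for every $i$.

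The remaining step is to let $i\to\infty$ and show $H_\mu(\alpha\,|\,\eta_i)\to 0$. Because $\bigvee_i\eta_i$ coincides with the full Borel $\sigma$-field up to measure zero, the finite partition $\alpha$ is measurable with respect to it, so $H_\mu(\alpha\,|\,\bigvee_i\eta_i)=0$. The continuity of conditional entropy along an increasing sequence of $\sigma$-algebras---a consequence of the martingale convergence theorem applied to the conditional probabilities of the atoms of $\alpha$---then yields $H_\mu(\alpha\,|\,\eta_i)\searrow H_\mu(\alpha\,|\,\bigvee_i\eta_i)=0$. Passing to the limit in the displayed inequality, and using that $h_\mu(T,\eta_i)\nearrow\sup_i h_\mu(T,\eta_i)$, gives $h_\mu(T,\alpha)\leq\sup_i h_\mu(T,\eta_i)$. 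Taking the supremum over all finite partitions $\alpha$ produces $h_\mu(T)\leq\sup_i h_\mu(T,\eta_i)$, and together with the trivial inequality this proves the claim.

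The main obstacle is the limiting step $H_\mu(\alpha\,|\,\eta_i)\to 0$: both the comparison estimate and this convergence are standard but nontrivial facts of entropy theory (found in the cited \cite{Pet83}), and the cleanest justification of the convergence goes through martingale convergence rather than a bare-hands computation. Everything else is bookkeeping with the monotonicity and subadditivity properties of $H_\mu$.
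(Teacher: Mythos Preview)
The paper does not give its own proof of this proposition: it merely states that the result is taken from \cite{Pet83}. Your argument is correct and is precisely the standard proof one finds in that reference (the comparison inequality $h_\mu(T,\alpha)\leq h_\mu(T,\beta)+H_\mu(\alpha\mid\beta)$ followed by the martingale-convergence step $H_\mu(\alpha\mid\eta_i)\searrow 0$), so there is nothing to compare.
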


\subsection{Entropy with Kolmogorov information}

In this section, $T$ is an endomorphism of the computable probability space
$(X,\mu)$ and $\xi=\{C_1,\ldots,C_k\}$ is a computable partition. Let
$(\Sigma^\N,\mu_\xi,\sigma)$ be the effective symbolic model of
$(X,\mu,T,\xi)$ where $\Sigma=\{1,\ldots,k\}$ (see section \ref{section_effective_symbolic}).

Kolmogorov introduced his algorithmic information content (also called Kolmogorov complexity) as a quantity of information,
on the same level as Shannon information. When the measure, the
transformation and the partition are computable, it makes sense to define
the algorithmic equivalents of the notions defined above. It turns out that
the two points of view are strongly related.

An atom $C$ of the partition $\xi_n$ can then be seen as a word of length
$n$ on the alphabet $\Sigma$, which allows one to consider its Kolmogorov
complexity $K(C)$. For those points whose all iterates are covered by $\xi$
(they form a constructive dense $G_\delta$ of full measure), we define the
\defin{Kolmogorov information function} relative to the partition $\xi_n$:
$$
\mathcal{I}(x|\xi_n):= K(\xi_n(x))
$$
which is independent of $\mu$. We can then define \defin{algorithmic entropy} of the
partition $\xi_n$ as the mean of $\mathcal{I}$:
$$
\mathcal{H}_\mu(\xi_n):=\int_X \mathcal{I}(.|\xi_n)d\mu = \sum_{C\in\xi_n}\mu(C)K(C).
$$

We also define a local notion of algorithmic entropy, which we call
symbolic orbit complexity:
\begin{definition}[Symbolic orbit complexity]\label{ksym}
Let $T$ be an endomorphism of the computable probability space
$(X,\mu)$. For any finite computable partition $\xi$, we define
$\Ksym(x,T|\xi) := \limsup_n\frac{1}{n}\mathcal{I}(x|\xi_n)$. Then, we can suppress the dependence on $\xi$ by taking the supremum over all computable partitions:
$$
\Ksym(x,T) := \sup\{\Ksym(x,T|\xi):\xi\textrm{ computable partition}\}
$$
\end{definition}

As there are only countably many computable partitions, $\Ksym(x,T)$ is
defined almost everywhere (at least on random points).

The quantity $\Ksym(x,T|\xi)$ was introduced by Brudno in
\cite{Bru83} without any computability restriction on the space, the
measure nor the transformation. He proved:
\begin{theorem}[Brudno]\label{theorem_brudno1}
$\Ksym(x,T|\xi)=h_\mu(T,\xi)$ for $\mu$-almost every point.
\end{theorem}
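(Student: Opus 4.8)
The plan is to transport the problem to the effective symbolic model $(\Sigma^\N,\mu_\xi,\sigma)$ and to sandwich $\frac{1}{n}K(\xi_n(x))$ between two quantities that both converge to $h_\mu(T,\xi)$. Write $\omega=\phi_\xi(x)$; by construction the atom $\xi_n(x)$ is coded by the word $\omega_{1:n}$, so $\mathcal{I}(x|\xi_n)=K(\omega_{1:n})$ and $\mu(\xi_n(x))=\mu_\xi([\omega_{1:n}])$, whence $I_\mu(x|\xi_n)=-\log\mu_\xi([\omega_{1:n}])$. Moreover $h_\mu(T,\xi)=h_{\mu_\xi}(\sigma)$ by Proposition \ref{sym}. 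The Shannon--McMillan--Breiman theorem (\ref{shannon_mcmillan}) then gives that for $\mu$-almost every $x$,
\begin{equation*}
\frac{1}{n}I_\mu(x|\xi_n)=-\frac{1}{n}\log\mu_\xi([\omega_{1:n}])\longrightarrow h_\mu(T,\xi).
\end{equation*}
Everything thus reduces to comparing $\frac{1}{n}K(\omega_{1:n})$ with $-\frac{1}{n}\log\mu_\xi([\omega_{1:n}])$.

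For the upper bound I would invoke Proposition \ref{prop_complexity_measure}, which yields $K(\omega_{1:n})\leqplus -\log\mu_\xi([\omega_{1:n}])+K(|\omega_{1:n}|)$. Since $K(n)\leqplus J(\log n)$ grows only logarithmically, $\frac{1}{n}K(n)\to 0$, so
\begin{equation*}
\limsup_n\frac{1}{n}K(\omega_{1:n})\leq\limsup_n\frac{1}{n}I_\mu(x|\xi_n)=h_\mu(T,\xi)
\end{equation*}
for $\mu$-almost every $x$. This is the easy direction, and it is essentially the ``first result in Brudno and White's theorem'' mentioned earlier.

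The lower bound is the core of the argument. Here I would use that the randomness deficiency $d_{\mu_\xi}$ is finite almost everywhere; equivalently, by Theorem \ref{symbolic} and Proposition \ref{morphism_random}, $\phi_\xi$ maps every $\mu$-random point to a $\mu_\xi$-random sequence, to which Theorem \ref{theorem_deficiency} applies. Thus for $\mu$-almost every $x$ there is a constant $m$ with $K(\omega_{1:n})\geq -\log\mu_\xi([\omega_{1:n}])-m$ for all $n$, so
\begin{equation*}
\liminf_n\frac{1}{n}K(\omega_{1:n})\geq\liminf_n\left(\frac{1}{n}I_\mu(x|\xi_n)-\frac{m}{n}\right)=h_\mu(T,\xi).
\end{equation*}
Combining the two estimates, $\lim_n\frac{1}{n}K(\omega_{1:n})$ exists and equals $h_\mu(T,\xi)$ for $\mu$-almost every $x$; since $\Ksym(x,T|\xi)=\limsup_n\frac{1}{n}\mathcal{I}(x|\xi_n)$, this is exactly the claim.

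The main obstacle is a matter of bookkeeping with null sets in the lower bound: the Shannon--McMillan--Breiman convergence holds only on a full-measure set, while the estimate $K(\omega_{1:n})\geq -\log\mu_\xi([\omega_{1:n}])-m$ holds on the (also full-measure) set where $d_{\mu_\xi}<\infty$. The argument must be run on the intersection of these two sets, which again has measure one, so the conclusion is genuinely \emph{almost everywhere} rather than everywhere. This distinction is precisely what will need to be upgraded later when the statement is strengthened to hold at \emph{every} $\mu$-random point.
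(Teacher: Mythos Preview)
Your proof is correct and follows essentially the same route as the paper: the paper packages your upper and lower estimates into the single sandwich inequality $I_\mu(.|\xi_n)-d_{\mu_\xi}\circ\phi_\xi \leq \mathcal{I}(.|\xi_n) \leqplus I_\mu(.|\xi_n)+K(n)$ (obtained from Proposition~\ref{prop_complexity_measure} and Theorem~\ref{theorem_deficiency} exactly as you do), deduces from it that $\Ksym(x,T|\xi)=\limsup_n\frac{1}{n}I_\mu(x|\xi_n)$ at every $\mu$-random point, and then invokes Shannon--McMillan--Breiman. Your handling of the null sets and the final remark about the later upgrade to all random points are both on target.
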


\begin{remark}
Already Brudno remarked that if $x$ has not an eventually periodic orbit, by taking the supremum of $\Ksym(x,T|\xi)$ over all -- not necessarily computable -- finite partitions 
$\xi$ generally gives an infinite quantity, that is why Brudno did not go further 
(he did not have a computable structure at his disposal), and proposed a topological
 definition using open covers instead of partitions. 
\end{remark}

 We will compare $\Ksym$ and Brudno orbit complexity in  section \ref{oc}, we now show that the hypothesis of definition \ref{ksym} enables one 
 to derive Brudno's theorem in a rather simple manner.


The theory of randomness and Kolmogorov complexity on the space of symbolic
sequences provides powerful results (theorem \ref{theorem_deficiency} and
proposition \ref{prop_complexity_measure}) which enable to relate the
algorithmic entropies $\mathcal{I}_\mu$ and $\mathcal{H}_\mu$ to the
Shannon entropies $I_\mu$ and $H_\mu$ (inequalities
(\ref{information_functions}), (\ref{entropies})). We recall these two results:
if $\Sigma^\N$ is endowed with a computable probability measure $\nu$, then
for all $\omega\in \Sigma^\N$,
\begin{equation}\label{complexity_measure}
\begin{array}{ccccc}
-\log\nu[\omega_{0..n-1}]-d_{\nu}(\omega) & \leq & K(\omega_{0..n-1}) &
 \leqplus & -\log\nu[\omega_{0..n-1}]+K(n)
\end{array}
\end{equation}
where $d_{\nu}$ is the deficiency of randomness, which satisfies
$\int_{\Sigma^\N} d_{\nu}d\nu<1$ and is finite exactly on Martin-L\"of
random sequences (the constant in $\leqplus$ does not depend on $\omega$
and $n$, see section \ref{Kolmo_estimates}).

\subsection{Equivalence between local entropies}
Applying (\ref{complexity_measure}) to $\nu=\mu_\xi$ directly gives:
\begin{equation}\label{information_functions}
\begin{array}{ccccc}
I_\mu(.|\xi_n)-d_\mu\circ\phi_\xi & \leq & \mathcal{I}(.|\xi_n) & \leqplus
& I_\mu(.|\xi_n)+K(n)
\end{array}
\end{equation}
where it is defined (almost everywhere, at least on random
points). Every $\mu$-Martin-L\"of random point $x$ is mapped by $\phi_\xi$ on a
$\mu_\xi$-Martin-L\"of random sequence (see proposition \ref{morphism_random}), whose
randomness deficiency is finite. It then follows that the local entropies
using Shannon information and Kolmogorov information coincide on
$\mu$-random points:
\begin{proposition}
\begin{equation}\label{local_entropies_random_points}
\Ksym(x,T|\xi)=\limsup_n\frac{1}{n}I(x|\xi_n)\quad\text{for every
  $\mu$-Martin-L\"of random point $x$}
\end{equation}
\end{proposition}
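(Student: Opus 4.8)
The plan is to read off the claim directly from inequality (\ref{information_functions}), which has already been established just above. That double inequality bounds the Kolmogorov information function $\mathcal{I}(.|\xi_n)$ between $I_\mu(.|\xi_n) - d_\mu\circ\phi_\xi$ on the left and $I_\mu(.|\xi_n) + K(n)$ on the right, the constant in $\leqplus$ being independent of the point and of $n$. The strategy is simply to divide every term by $n$ and take the $\limsup$ as $n\to\infty$, exploiting that the two ``error'' terms $d_\mu\circ\phi_\xi$ and $K(n)$ both vanish after this normalization, provided $x$ is $\mu$-random.

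First I would fix a $\mu$-Martin-L\"of random point $x$. By Proposition \ref{morphism_random} (morphisms of CPS preserve randomness), since $\phi_\xi$ is a morphism of computable probability spaces by Theorem \ref{symbolic}, the image $\phi_\xi(x)$ is a $\mu_\xi$-random sequence. By Theorem \ref{theorem_deficiency} its randomness deficiency $d_\mu(\phi_\xi(x)) = d_{\mu_\xi}(\phi_\xi(x))$ is therefore \emph{finite}. Hence the left-hand error term $d_\mu\circ\phi_\xi$ evaluated at $x$ is a finite constant (depending on $x$ but not on $n$), so $\frac{1}{n}d_\mu(\phi_\xi(x))\to 0$. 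For the right-hand side, recall from section \ref{Kolmo_estimates} that $K(n)\leqplus J(\log n)=\log n + 2\log(\log n+1)$, which is $O(\log n)$, so $\frac{1}{n}K(n)\to 0$ as well. The additive constant hidden in $\leqplus$ likewise contributes nothing after division by $n$.

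Dividing (\ref{information_functions}) through by $n$ and passing to the $\limsup$, both bounding sequences have the same $\limsup$, namely $\limsup_n\frac{1}{n}I_\mu(x|\xi_n)$, because they differ from it only by terms that tend to $0$. The squeeze then forces
\begin{equation*}
\limsup_n\frac{1}{n}\mathcal{I}(x|\xi_n)=\limsup_n\frac{1}{n}I_\mu(x|\xi_n),
\end{equation*}
and the left-hand side is exactly $\Ksym(x,T|\xi)$ by Definition \ref{ksym}. This yields (\ref{local_entropies_random_points}).

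I do not expect a genuine obstacle here, since the heavy lifting was done in assembling (\ref{information_functions}); the only point demanding care is the justification that the error terms really are negligible after normalization. The one subtlety worth stating explicitly is that the finiteness of the deficiency $d_\mu(\phi_\xi(x))$ is precisely what distinguishes random points from arbitrary points: the inequality (\ref{information_functions}) holds almost everywhere, but the conclusion requires the left error term to be finite at the specific $x$, which is guaranteed \emph{only} on the random points via Theorem \ref{theorem_deficiency}. I would make sure the statement flags that the equality is asserted pointwise on $R_\mu$ and not merely almost everywhere.
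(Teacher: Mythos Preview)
Your proof is correct and follows essentially the same approach as the paper: divide inequality (\ref{information_functions}) by $n$, observe that $d_{\mu_\xi}(\phi_\xi(x))$ is finite for $\mu$-random $x$ via Proposition \ref{morphism_random}, note $K(n)/n\to 0$, and squeeze. If anything, you spell out the details (the $O(\log n)$ bound on $K(n)$, the explicit invocation of Theorem \ref{symbolic} and Theorem \ref{theorem_deficiency}) more carefully than the paper, which compresses the argument into two sentences.
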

This equality together with the Shannon-McMillan-Breiman theorem (\ref{shannon_mcmillan}) give
directly Brudno's theorem (theorem \ref{theorem_brudno1}).

\begin{remark}[Equivalence between global entropies]
Now, the Kolmogorov-Sina\"i entropy, originally expressed using Shannon
entropy, can be expressed using algorithmic entropy. Taking the mean
in (\ref{information_functions}), one obtains:
\begin{equation}\label{entropies}
\begin{array}{ccccc}
H_\mu(\xi_n)-1 & \leq & \mathcal{H}_\mu(\xi_n) & \leqplus & H_\mu(\xi_n)+K(n)
\end{array}
\end{equation}

So,
$$
h_\mu(T|\xi)=\lim_n
\frac{H_\mu(\xi_n)}{n}=\lim_n\frac{\mathcal{H}_\mu(\xi_n)}{n}
$$
As the collection of computable partitions is generating (see corollary
\ref{corollary_generating}), the Kolmogorov-Sina\"i entropy of $(X,\mu,T)$
can be characterized by:
\[
h_{\mu}(T)=\sup\left\{\lim_n\frac{\mathcal{H}_\mu(\xi_n)}{n}:\xi \textrm{  finite computable partition}\right\}.
\]

It then follows that $\Ksym(x,T)=h_\mu(T)$ for $\mu$-almost every $x$. We
now strengthen this, proving that it holds for all Martin-L\"of
random points.
\end{remark}

\subsection{Orbit complexity vs entropy}

On the Cantor space, V'yugin (\cite{Vyu97}) and later Nakamura
(\cite{Nak05}) proved a slightly weaker version of the
Shannon-McMillan-Breiman for Martin-L\"of random sequences. In particular,
we will use:

\begin{theorem}[V'yugin]\label{cantor_Shannon_random}
Let $\mu$ be a computable shift-invariant ergodic measure on
$\Sigma^{\N}$. Then, for any $\mu$-Martin-L\"of random sequence $\omega$,
$$
 \limsup_{n\rightarrow \infty} -\frac{1}{n}\log \mu([\omega_{0..n-1}])=h_{\mu}(\sigma).
$$
\end{theorem}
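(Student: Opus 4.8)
The plan is to prove the two inequalities separately, both using the sandwich estimate \eqref{complexity_measure} to pass between $-\log\mu[\omega_{0..n-1}]$ and the Kolmogorov complexity $K(\omega_{0..n-1})$, and then relating the latter to the entropy $h_\mu(\sigma)$. For a $\mu$-random $\omega$ the deficiency $d_\mu(\omega)$ is finite, so \eqref{complexity_measure} gives
\[
-\log\mu[\omega_{0..n-1}] \;\leq\; K(\omega_{0..n-1})+d_\mu(\omega)
\quad\text{and}\quad
K(\omega_{0..n-1}) \;\leq\; -\log\mu[\omega_{0..n-1}]+K(n)+c.
\]
Dividing by $n$ and letting $n\to\infty$, the terms $d_\mu(\omega)/n$, $K(n)/n$ and $c/n$ all vanish (using $K(n)\leqplus J(\log n)=O(\log n)$), so
\[
\limsup_n -\tfrac1n\log\mu[\omega_{0..n-1}] \;=\; \limsup_n \tfrac1n K(\omega_{0..n-1}).
\]
Thus the claim reduces to showing that this complexity rate equals $h_\mu(\sigma)$ on every random sequence.

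For the upper bound $\limsup_n \tfrac1n K(\omega_{0..n-1})\leq h_\mu(\sigma)$, the idea is that $K(\omega_{0..n-1})$ is, up to the vanishing error above, essentially $-\log\mu[\omega_{0..n-1}]$, whose $\mu$-mean is $H_\mu(\xi_n)$ with $\xi$ the canonical partition of $\Sigma^\N$ into cylinders $[i]$. Since $\tfrac1n H_\mu(\xi_n)\to h_\mu(\sigma)$ and the sequence is subadditive, one controls the $\limsup$ of the individual information rate. The cleanest route is to invoke an effective maximal inequality: since $\int d_\mu\,d\mu<1$ and each $K(\omega_{0..n-1})\leqplus -\log\mu[\omega_{0..n-1}]+O(\log n)$, a Borel--Cantelli / Markov-type argument over the effective null set where the rate exceeds $h_\mu(\sigma)+\varepsilon$ shows that no random point can lie in it. Concretely, one bounds the measure of the set of $\omega$ with $-\tfrac1n\log\mu[\omega_{0..n-1}]>h_\mu(\sigma)+\varepsilon$ for infinitely many $n$, exhibits it as an effective $\mu$-null set (a Martin-L\"of test), and concludes by definition of randomness.

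For the lower bound $\limsup_n \tfrac1n K(\omega_{0..n-1})\geq h_\mu(\sigma)$, I would again work with $-\log\mu[\omega_{0..n-1}]$ and show its $\limsup$ rate is at least $h_\mu(\sigma)$ on random points. Here the natural tool is the coding theorem (theorem \ref{coding_theorem}) together with a counting/covering argument: if the rate were strictly below $h_\mu(\sigma)-\varepsilon$ along a full-measure-looking set, one could cover too much measure with too few cylinders, contradicting the entropy being the exponential growth rate of the number of ``typical'' cylinders. The main obstacle is precisely this lower bound: unlike the almost-everywhere Shannon--McMillan--Breiman theorem, here the liminf may genuinely fail for individual random sequences (V'yugin's point is that only the $\limsup$ survives), so one cannot simply quote SMB pointwise. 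The argument must instead be an effective covering estimate that is robust enough to hold for \emph{every} random $\omega$, controlling the measure of the bad set $\{\omega:-\tfrac1n\log\mu[\omega_{0..n-1}]<h_\mu(\sigma)-\varepsilon \text{ eventually}\}$ and recognizing it as an effective null set. Since this is the particular case of V'yugin's main theorem, I would expect the detailed proof to be deferred to \cite{Vyu97}, with the excerpt above supplying only the reduction from $K$ to $-\log\mu$ via \eqref{complexity_measure}.
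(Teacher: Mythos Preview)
The paper does not prove this theorem at all: it is stated as V'yugin's result and attributed to \cite{Vyu97} (with a later reference to Nakamura \cite{Nak05}), and then used as a black box to derive Corollary~\ref{Shannon_random}. Your final sentence anticipates exactly this, so in that sense your expectation matches the paper.

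That said, the sketch you offer before that point does not amount to an independent proof, and it is worth naming the gaps. The reduction via \eqref{complexity_measure} from $-\tfrac1n\log\mu[\omega_{0..n-1}]$ to $\tfrac1n K(\omega_{0..n-1})$ is correct for random $\omega$, but it does not simplify the problem: showing that the complexity rate equals $h_\mu(\sigma)$ on every random sequence is equivalent to the original statement by the same sandwich, so you have only restated the theorem. For the upper bound, your proposed Martin-L\"of test $\{\omega:-\tfrac1n\log\mu[\omega_{0..n-1}]>h_\mu(\sigma)+\varepsilon\}$ faces two obstacles you do not address: the entropy $h_\mu(\sigma)$ need not be computable, so these sets are not obviously uniformly r.e.\ open; and a bare Markov inequality does not give the required decay of their measures---one needs quantitative SMB-type input, which is precisely what is at stake. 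For the lower bound you essentially concede the point. V'yugin's actual argument proceeds through his effective ergodic theorem and a careful treatment of the (unbounded, discontinuous) information function, and neither the paper nor your sketch reproduces that machinery.
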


Note that it is not known yet if the limit exists for all random sequences.

Using effective symbolic models, this can be easily extended to any
computable probability space.

\begin{corollary}[Shannon-McMillan-Breiman for random points]\label{Shannon_random}
Let $T$ be an ergodic endomorphism of the computable probability space
$(X,\mu)$, and $\xi$ a computable partition. For every $\mu$-Martin-L\"of random point $x$,
$$
 \limsup_{n\rightarrow \infty} -\frac{1}{n}\log \mu(\xi_n(x))=h_{\mu}(T,\xi).
$$
\end{corollary}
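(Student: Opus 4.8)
The plan is to reduce the statement to its symbolic counterpart, V'yugin's theorem (Theorem \ref{cantor_Shannon_random}), by pushing the random point $x$ forward through the effective symbolic model associated to $\xi$. The whole argument parallels the proof of Theorem \ref{birkhoff_theorem}: the coding map $\phi_\xi$ transports both the random point and the quantity $\mu(\xi_n(x))$ to the Cantor space, where the required identity is already known.

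First I would invoke Theorem \ref{symbolic}: since $\xi=\{C_1,\ldots,C_k\}$ is a finite computable partition, the symbolic model $(\Sigma^\N,\mu_\xi,\sigma)$ with $\Sigma=\{1,\ldots,k\}$ is effective, i.e. $\phi_\xi$ is a morphism of computable probability spaces. By Proposition \ref{morphism_random}, for any $\mu$-random $x$ the image $\omega:=\phi_\xi(x)$ is a well-defined $\mu_\xi$-random sequence. Next I would record the exact dictionary between atoms and cylinders: by the definition of $\phi_\xi$, the atom $\xi_n(x)=C_{\omega_0}\cap T^{-1}C_{\omega_1}\cap\ldots\cap T^{-(n-1)}C_{\omega_{n-1}}$ coincides, modulo the full-measure domain, with $\phi_\xi^{-1}[\omega_{0..n-1}]$, so that $\mu(\xi_n(x))=\mu_\xi([\omega_{0..n-1}])$ for every $n$. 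Applying V'yugin's theorem to $\omega$ then yields $\limsup_n -\frac{1}{n}\log\mu_\xi([\omega_{0..n-1}])=h_{\mu_\xi}(\sigma)$, and substituting the dictionary on the left together with Proposition \ref{sym} (which gives $h_{\mu_\xi}(\sigma)=h_\mu(T,\xi)$) on the right produces exactly the claimed equality.

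The only hypotheses of V'yugin's theorem that require checking are that $\mu_\xi$ is a computable, shift-invariant, ergodic measure on $\Sigma^\N$. Computability is immediate from Proposition \ref{transfer}. Shift-invariance follows from the intertwining $\phi_\xi\circ T=\sigma\circ\phi_\xi$ together with the $T$-invariance of $\mu$: for any Borel $B$ one has $\mu_\xi(\sigma^{-1}B)=\mu(T^{-1}\phi_\xi^{-1}B)=\mu(\phi_\xi^{-1}B)=\mu_\xi(B)$. The step carrying the real content is ergodicity of the factor: a $\sigma$-invariant set $A$ pulls back to the $T$-invariant set $\phi_\xi^{-1}(A)$, which by ergodicity of $T$ has $\mu$-measure $0$ or $1$, whence $\mu_\xi(A)=\mu(\phi_\xi^{-1}A)\in\{0,1\}$. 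This is the standard fact that factors of ergodic systems remain ergodic, and it is the main (though routine) obstacle; everything else is bookkeeping supplied by the effective symbolic machinery, which is exactly why the extension is \emph{easy}.
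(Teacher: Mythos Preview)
Your proof is correct and follows exactly the same route as the paper: push $x$ through the effective symbolic model via $\phi_\xi$, apply V'yugin's theorem to the resulting $\mu_\xi$-random sequence, and translate back using $\mu(\xi_n(x))=\mu_\xi([\omega_{0..n-1}])$ and $h_\mu(T,\xi)=h_{\mu_\xi}(\sigma)$. Your write-up is in fact more explicit than the paper's, since you spell out why $\mu_\xi$ is computable, shift-invariant, and ergodic---points the paper leaves implicit.
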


\begin{proof}
Since $\xi$ is computable, the symbolic model $(X_{\xi},\mu_{\xi},\sigma)$
is effective. Every $\mu$-Martin-L\"of random point $x$ is mapped to a
$\mu_\xi$-Martin-L\"of random sequence $\omega$, for which the
preceding theorem holds. Using the facts that
$\mu(\xi_n(x))=\mu_\xi([\omega_{0..n-1}])$ and
$h_\mu(T,\xi)=h_{\mu_\xi}(\sigma)$ allows to conclude.
\end{proof}

Finally, this implies our first announced result:

\begin{theorem}\label{final}
Let $T$ be an ergodic endomorphism of the computable probability space
$(X,\mu)$. For every $\mu$-Martin-L\"of random point $x$:
$$
\Ksym(x,T)=h_{\mu}(T).
$$
\end{theorem}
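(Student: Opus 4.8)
The plan is to combine the two local equivalences already established and then use the generating property of computable partitions to pass from a single partition to the full entropy. First I would fix an arbitrary computable partition $\xi$ and a $\mu$-Martin-L\"of random point $x$. By Proposition \ref{local_entropies_random_points}, $\Ksym(x,T|\xi)=\limsup_n\frac{1}{n}I(x|\xi_n)$. Since $I(x|\xi_n)=-\log\mu(\xi_n(x))$ by definition of the information function, Corollary \ref{Shannon_random} gives $\limsup_n\frac{1}{n}I(x|\xi_n)=\limsup_n-\frac{1}{n}\log\mu(\xi_n(x))=h_\mu(T,\xi)$. Hence $\Ksym(x,T|\xi)=h_\mu(T,\xi)$ for every computable partition $\xi$ and every $\mu$-random $x$.

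Next I would take the supremum over the (countable) family of computable partitions. Because the equality $\Ksym(x,T|\xi)=h_\mu(T,\xi)$ holds term by term over this common index set, we obtain $\Ksym(x,T)=\sup\{h_\mu(T,\xi):\xi\text{ computable partition}\}$ for every $\mu$-random $x$.

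It then remains to identify this restricted supremum with the full Kolmogorov-Sina\"i entropy $h_\mu(T)=\sup\{h_\mu(T,\xi):\xi\text{ finite measurable partition}\}$. The inequality $\sup\{h_\mu(T,\xi):\xi\text{ computable}\}\leq h_\mu(T)$ is immediate, since computable partitions are in particular measurable. For the reverse inequality I would invoke Corollary \ref{corollary_generating} to obtain a family $(\xi_i)_{i\in\N}$ of computable partitions generating the Borel $\sigma$-field. The finite joins $\xi_0\vee\ldots\vee\xi_i$ are again computable partitions: their atoms are finite intersections of r.e. open sets, which are r.e. open, r.e. open sets being closed under finite intersection. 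By Proposition \ref{petersen}, $h_\mu(T)=\sup_i h_\mu(T,\xi_0\vee\ldots\vee\xi_i)$, and each term on the right is of the form $h_\mu(T,\eta)$ with $\eta$ computable. Thus $h_\mu(T)\leq\sup\{h_\mu(T,\xi):\xi\text{ computable}\}$, and combining the two inequalities yields equality. Substituting into the previous paragraph gives $\Ksym(x,T)=h_\mu(T)$ for every $\mu$-random $x$.

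The only step that is more than bookkeeping is this last identification of the two suprema: it rests on having a \emph{generating} family made of \emph{computable} partitions (Corollary \ref{corollary_generating}), together with Petersen's refinement result (Proposition \ref{petersen}) and the closure of computable partitions under finite joins. Everything else follows directly from the already established local equivalences, so I expect the closure-and-generation argument to be the main (though not difficult) obstacle.
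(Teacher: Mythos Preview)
Your proof is correct and follows exactly the paper's approach: combine equality (\ref{local_entropies_random_points}) with Corollary \ref{Shannon_random} to get $\Ksym(x,T|\xi)=h_\mu(T,\xi)$ for every computable $\xi$, then take the supremum and invoke Corollary \ref{corollary_generating} together with Proposition \ref{petersen}. You are simply more explicit than the paper about why the restricted supremum equals $h_\mu(T)$, spelling out that finite joins of computable partitions remain computable---a point the paper leaves implicit.
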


\begin{proof}
We combine equality (\ref{local_entropies_random_points}) and corollary
\ref{Shannon_random}: for every random point $x$, $\Ksym(x,T|\xi)=\limsup_n
\frac{1}{n}I_\mu(x|\xi_n)=h_\mu(T,\xi)$. Since the collection of all
computable partitions generates the Borel $\sigma$-field (corollary
\ref{corollary_generating}), $\Ksym(x,T)=\sup\{h_\mu(T,\xi):\xi\mbox{ computable
partition}\}=h_\mu(T)$ (proposition \ref{petersen}).
\end{proof}


\subsection{Orbit complexity} \label{oc}

In this section, $(X,d,\S)$ is a computable metric space and $T:X\to X$ a
transformation (for the moment, no continuity or computability assumption
is put on $T$). We will consider a notion of orbit complexity
 which quantifies the algorithmic
information needed to describe the orbit of $x$ with finite but arbitrarily
accurate precision. The definition we will give coincide with Brudno's original definition on
compact spaces (see \cite{Gal00}).

Given $\epsilon>0$ and $n\in\N$, the
algorithmic information that is needed to list a sequence of ideal points which follows the orbit of $x$ for $n$ steps at a distance less than $\epsilon $ is:
$$
\K_n(x,T,\epsilon) := \min\{K(i_0,\ldots,i_{n-1}):
d(s_{i_j},T^jx)<\epsilon \text{ for }j=0,\ldots,n-1\}
$$
where $K$ is the self-delimiting Kolmogorov complexity.

We then define the
maximal and minimal growth-rates of this quantity:
\begin{eqnarray*}
\uK(x,T,\epsilon) & := &
\limsup_{n\to\infty}\frac{1}{n}\K_n(x,T,\epsilon) \\
\lK(x,T,\epsilon) & := &
\liminf_{n\to\infty}\frac{1}{n}\K_n(x,T,\epsilon).
\end{eqnarray*}

As $\epsilon$ tends to $0$, these quantities increase (or at least do not
decrease), hence they have limits (which can be infinite).

\begin{definition}
The \defin{upper} and \defin{lower orbit complexities} of $x$ under $T$ are defined by:
\begin{eqnarray*}
\uK(x,T) & := & \lim_{\epsilon \to 0^+}\uK(x,T,\epsilon) \\
\lK(x,T) & := & \lim_{\epsilon \to 0^+}\lK(x,T,\epsilon).
\end{eqnarray*}
\end{definition}

\begin{remark} If $T$ is computable, and assuming that $\epsilon$ takes
  only rational values, the $n$ first iterates of $x$ could be
$\epsilon$-shadowed by the orbit of a single ideal point instead of a
pseudo-orbit of $n$ ideal points. Actually it is easy to see that it gives
the same quantities $\uK(x,T,\epsilon)$ and $\lK(x,T,\epsilon)$: let $\K'_n(x,T,\epsilon)=\min\{K(i):
d(T^js_i,T^jx)<\epsilon \text{ for } j<n\}$, one has:
\begin{eqnarray*}
\K'_n(x,T,2\epsilon) & \leqplus & \K_n(x,T,\epsilon) + K(\epsilon) \\
\K_n(x,T,\epsilon) & \leqplus & \K'_n(x,T,\epsilon/2) + K(n,\epsilon)
\end{eqnarray*}
Indeed, from $\epsilon$ and $i_0,\ldots,i_{n-1}$ some ideal point can be
algorithmically found in the constructive open set
$B(s_{i_0},\epsilon)\cap\ldots\cap T^{-(n-1)}B(s_{i_{n-1}},\epsilon)$,
uniformly in $i_0,\ldots,i_{n-1}$. Its $n$ first iterates
$2\epsilon$-shadow the orbit of $x$, which proves the first inequality. For
the second inequality, some $i_0,\ldots,i_{n-1}$ can be algorithmically
found from $n$, $\epsilon$, and a point $s_i$ whose $n$ first iterates $\epsilon/2$-shadow the orbit of $x$,
taking any $s_{i_j}\in B(T^js_i,\epsilon/2)$.
\end{remark}

\begin{remark}
Under the same assumptions, one could define $K(B_n(s_i,\epsilon))$ to be
  $K(i,n,\epsilon)$, and replace $K(i)$ by $K(B_n(s_i,\epsilon))$ in the
  definition of $\K'_n(x,T,\epsilon)$, without changing the quantities
  $\uK(x,T,\epsilon)$ and $\lK(x,T,\epsilon)$. Indeed,
$$
K(i)\leqplus K(B_n(s_i,\epsilon)) \leqplus K(i)+K(n)+K(\epsilon)
$$
\end{remark}

\section{Equivalence of the two notions of orbit complexity for random points}\label{section_equivalence}

We now prove:
\begin{theorem}\label{shadsym}Let $T$ be an ergodic endomorphism of the computable probability space $(X,\mu)$, where $X$
  is compact. Then for every Martin-L\"of random point $x$,
$$
\uK(x,T)=\Ksym(x,T).
$$
\end{theorem}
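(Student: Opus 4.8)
The plan is to bound $\uK(x,T)$ and $\Ksym(x,T)$ against each other by comparing, for each scale $\epsilon$ and each finite computable partition $\xi$, the pseudo-orbit quantity $\K_n(x,T,\epsilon)$ with the symbolic information $\mathcal{I}(x|\xi_n)=K(\xi_n(x))$. Two inequalities will suffice: (i) if the atoms of $\xi$ all have diameter $<\epsilon$, then $\uK(x,T,\epsilon)\leq\Ksym(x,T|\xi)$; and (ii) for every computable $\xi$, $\Ksym(x,T|\xi)\leq\uK(x,T,\epsilon)+R(\epsilon,\xi)$ with an error term $R(\epsilon,\xi)\to0$ as $\epsilon\to0$. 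Letting $\epsilon\to0$ in (i) and taking the supremum over all finite computable partitions in (ii) (Definition \ref{ksym}) then yields $\uK(x,T)=\Ksym(x,T)$. The two essential ingredients are the compactness of $X$, used to make certain containment relations semi-decidable, and the statistical typicality of the random point $x$ (Theorem \ref{birkhoff_theorem}), used to control how often the orbit comes close to the boundaries of $\xi$.

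For inequality (i) I would fix $\epsilon$ and, using compactness together with the generating families of almost decidable balls (Corollary \ref{corollary_generating}), produce a finite computable partition $\xi$ whose atoms have diameter $<\epsilon$. Given the word $\xi_n(x)=(m_0,\dots,m_{n-1})$, the atoms $C_{m_j}$ are nonempty r.e. open sets, so one can algorithmically extract from each an ideal point $s_{i_j}\in C_{m_j}$; since $\mathrm{diam}(C_{m_j})<\epsilon$ and $T^jx\in C_{m_j}$, the sequence $(s_{i_j})_j$ $\epsilon$-shadows the orbit. Hence $\K_n(x,T,\epsilon)\leqplus K(\xi_n(x))$, the additive constant depending only on $\xi$ and not on $n$, which gives $\uK(x,T,\epsilon)\leq\Ksym(x,T|\xi)\leq\Ksym(x,T)$ and, after $\epsilon\to0$, $\uK(x,T)\leq\Ksym(x,T)$.

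Inequality (ii) is the substantial part. Fix a computable partition $\xi=\{C_1,\dots,C_k\}$ with almost decidable atoms and a scale $\epsilon$, and let $(s_{i_0},\dots,s_{i_{n-1}})$ realize $\K_n(x,T,\epsilon)$, so $T^jx\in B(s_{i_j},\epsilon)$. I would recover the itinerary $\xi_n(x)$ as follows: for each $j$, search in parallel over atoms $C_m$ for a verification that $\overline{B}(s_{i_j},\epsilon)\subseteq C_m$. Because $X$ is compact, $\overline{B}(s_{i_j},\epsilon)$ is compact, and containment in an r.e. open set is semi-decidable (a finite subcover witnesses it); when such a verification succeeds the symbol at time $j$ is determined to be $m$. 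The times $j$ at which no verification succeeds are \emph{bad}: they force $s_{i_j}$, hence $T^jx$, to lie within $\sim\!2\epsilon$ of $\partial\xi$. By statistical typicality of $x$ (dominating the indicator of the $2\epsilon$-neighborhood of $\partial\xi$ by a suitable continuous function and applying Theorem \ref{birkhoff_theorem}), the asymptotic frequency $\alpha_\epsilon$ of bad times satisfies $\alpha_\epsilon\to\mu(\partial\xi)=0$ as $\epsilon\to0$, since the atoms are continuity sets. Encoding the pseudo-orbit, the positions of the bad times, and the true symbols there, one reconstructs $\xi_n(x)$ at cost
$$
K(\xi_n(x))\leqplus \K_n(x,T,\epsilon)+nH(\alpha_\epsilon)+\alpha_\epsilon n\log k+O(\log n),
$$
where $H$ is the binary entropy. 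Dividing by $n$ and taking $\limsup_n$ gives $\Ksym(x,T|\xi)\leq\uK(x,T,\epsilon)+H(\alpha_\epsilon)+\alpha_\epsilon\log k$, and letting $\epsilon\to0$ yields $\Ksym(x,T|\xi)\leq\uK(x,T)$; a supremum over $\xi$ finishes the reverse inequality.

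The main obstacle is the boundary bookkeeping in step (ii): one must simultaneously (a) make the itinerary algorithmically recoverable from a pseudo-orbit, which is exactly where compactness enters through semi-decidability of $\overline{B}(s_{i_j},\epsilon)\subseteq C_m$, and (b) ensure that the exceptional times spent near $\partial\xi$ are rare \emph{along the orbit of the particular random point} $x$, not merely on average. The latter is not automatic from measure theory and is precisely what the individual ergodic behaviour of random points (Theorem \ref{birkhoff_theorem}) provides; keeping the entropy cost $H(\alpha_\epsilon)+\alpha_\epsilon\log k$ of the bad symbols negligible in the limit is the delicate quantitative point.
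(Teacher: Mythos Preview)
Your overall strategy matches the paper's: inequality (i) is argued identically, and for (ii) both arguments control the density of orbit visits near $\partial\xi$ via the typicality of random points (Theorem \ref{birkhoff_theorem}) and then pay a sparse-correction cost for the exceptional positions. Your $H(\alpha_\epsilon)+\alpha_\epsilon\log k$ plays exactly the role of the paper's Lemma \ref{discr_lemma}.

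The difference is in the reconstruction algorithm for (ii), and your version carries a subtle gap. You propose to semi-decide $\overline{B}(s_{i_j},\epsilon)\subseteq C_m$, saying that compactness yields a finite subcover which ``witnesses'' the containment. But existence of a finite subcover does not by itself make the containment semi-decidable: one still has to \emph{verify} that a given finite family of ideal balls covers $\overline{B}(s_{i_j},\epsilon)$, and this requires \emph{effective} compactness of $X$ (the ability to enumerate all finite ideal covers of $X$), which the theorem does not assume. The paper avoids this entirely. Its algorithm $\A$ never certifies a containment; for each $j$ it simply dovetails the membership tests ``$s\in C$'' over all cells $C\in\xi$ and all ideal points $s\in B(s_{i_j},\epsilon)$. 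Since $T^jx$ lies in the nonempty r.e.\ open set $C\cap B(s_{i_j},\epsilon)$ for some $C$, an ideal point there is eventually found and the algorithm halts---possibly with the wrong cell, but it always outputs \emph{something}. Hence there are no ``bad times'' to be supplied externally, only discrepancies between the output string and $\xi_n(x)$, and these are absorbed by Lemma \ref{discr_lemma}. In particular the paper's proof of (ii) uses no compactness whatsoever; compactness enters only in (i), for the existence of a computable partition of small diameter. Your argument becomes valid if you either add effective compactness to the hypotheses or replace your containment test by the paper's always-halting search.
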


\begin{proof}[Proof of $\uK(x,T)\leq \Ksym(x,T)$]
Let $\epsilon>0$. Choose a computable partition $\xi$ of diameter
$<\epsilon$ (this is why we require $X$ to be compact). To every cell of
$\xi$, associate an ideal point which is inside (as $\xi$ is computable,
this can be done in a computable way, but we actually do not need
that). The translation of symbolic sequences in sequences of ideal points
through this finite dictionary is constructive, and transforms the symbolic
orbit of a point $x$ into a sequence of ideal points which is
$\epsilon$-close to the orbit of $x$. So $\uK(x,T,\epsilon)\leq
\Ksym(x,T|\xi)$. The inequality follows letting $\epsilon$ tend to $0$.
\end{proof}

To prove the other inequality, we recall some technical stuff. The
self-delimiting Kolmogorov complexity of natural numbers $k\geq 1$ satisfies
$$
K(k)\leqplus f(k)
$$ where $f(x)=\log x+1+2\log(\log x +1)$ for all $x\in \R,x \geq 1$. $f$
is a concave increasing function and $x\mapsto xf(1/x)$ is an increasing
function on $]0,1]$ which tends to $0$ as $x\to 0$.

We recall that for finite sequences of natural numbers $(k_1,\ldots,k_n)$, one
has
\begin{align*}
K(k_1,\ldots,k_n)\leqplus K(k_1)+\ldots+K(k_n)
\end{align*}
as the shortest descriptions for $k_1,\ldots,k_n$ can be extracted from
their concatenation (this is one reason to use the self-delimiting
complexity instead of the plain complexity).

\begin{lemma}
\label{discr_lemma}
Let $\Sigma$ be a finite alphabet and $n\in\N$. Let $u,v \in \Sigma^n$ and $0<\alpha<1/2$
such that the density of the set of positions where $u$ and $v$ differ is less
than $\alpha$, that is:
$$
\frac{1}{n}\#\{i\leq n: u_i\neq v_i\} < \alpha < 1/2
$$

Then $\left|\frac{1}{n}K(u)-\frac{1}{n}K(v)\right| < \alpha
f(1/\alpha)+\frac{c}{n}$ where $c$ is a constant independent of $u,v$ and $n$.
\end{lemma}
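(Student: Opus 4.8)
The plan is to prove the one-sided estimate $\frac1n K(u)-\frac1n K(v) < \alpha f(1/\alpha)+\frac cn$; since the hypothesis (Hamming density $<\alpha$) and the conclusion are symmetric in $u,v$, swapping the two letters gives the reverse inequality, and the two together yield the absolute-value bound. So fix $u,v$ differing on a set $S=\{i_1<\dots<i_m\}\subseteq\{1,\dots,n\}$ with $m=|S|<\alpha n$. The idea is to reconstruct $u$ from a \emph{shortest} description of $v$ together with a short ``patch'' recording where and how $u$ deviates from $v$: an interpreter that is handed $v$ and this patch should output $u$, so that the invariance theorem turns a short patch into the desired bound on $K(u)-K(v)$.

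To encode the patch economically I would list $S$ by its successive gaps: set $g_1=i_1$ and $g_j=i_j-i_{j-1}$ for $j\ge 2$, so that each $g_j\ge 1$ and $\sum_j g_j\le n$. At the $j$-th differing position I must also record the new symbol $u_{i_j}\in\Sigma$; since $\Sigma$ is fixed and finite I bundle the pair (gap, symbol) into a single integer $t_j$ with $1\le t_j\le |\Sigma|\,g_j$. Feeding the self-delimiting codes of $t_1,\dots,t_m$, preceded by a shortest code for $v$ (from which $n=|v|$ is recovered) and by a short code telling where the list ends, to an interpreter that rebuilds $u$, and using the estimates $K(k_1,\dots,k_m)\leqplus K(k_1)+\dots+K(k_m)$ and $K(k)\leqplus f(k)$ recalled before the lemma, gives
\[
K(u)\leqplus K(v)+\sum_{j=1}^m f(t_j)+O(\log n).
\]

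The heart of the argument is then the two analytic facts stated just above the lemma. Because $f$ is concave, Jensen's inequality gives $\sum_{j=1}^m f(t_j)\le m\,f\!\left(\tfrac1m\sum_j t_j\right)\le m\,f\!\left(\tfrac{|\Sigma|\,n}{m}\right)$, using $\sum_j t_j\le |\Sigma|\sum_j g_j\le |\Sigma|\,n$. Rewriting the right-hand side as $n\cdot\tfrac mn f\!\left(|\Sigma|\big/\tfrac mn\right)$ and invoking the fact that $x\mapsto x f(C/x)$ is increasing (it is just a rescaling of the increasing map $x\mapsto xf(1/x)$), together with $\tfrac mn<\alpha$, bounds the sum by $n\,\alpha\, f(|\Sigma|/\alpha)$. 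Dividing through by $n$, the overhead $O(\log n)$ becomes a vanishing lower-order term, and one obtains a bound of exactly the advertised shape, tending to $0$ as $\alpha\to 0$ precisely because $x\mapsto xf(1/x)\to 0$.

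The main obstacle is the bookkeeping around the alphabet and the overhead. The clean form $\alpha f(1/\alpha)$ comes from the gap encoding alone; the new symbols force the argument of $f$ to be rescaled by $|\Sigma|$, so the step to confront is checking that for a \emph{fixed} alphabet this rescaling alters only the additive constant and preserves the leading behaviour (the bounds $\alpha f(|\Sigma|/\alpha)$ and $\alpha f(1/\alpha)$ differ by $\alpha\log|\Sigma|$ up to lower order, and for a two-symbol alphabet the symbol is forced, recovering $\alpha f(1/\alpha)$ verbatim). One must also verify that the prefix overhead for $m$ contributes only an $o(1)$ term after division by $n$, and that the reconstruction of $u$ from $(v;t_1,\dots,t_m)$ is genuinely algorithmic, so that a single interpreter — hence a single constant $c$ via the invariance theorem — serves uniformly for all $u,v,n$.
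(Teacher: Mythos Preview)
Your argument follows the paper's exactly: encode the disagreement positions by their successive gaps $k_j$, bound $\sum_j f(k_j)$ via the concavity of $f$ and the monotonicity of $x\mapsto xf(1/x)$, and finish by symmetry in $u,v$. If anything you are more careful than the paper, which reconstructs $u$ from $v$ and the gap list alone (tacitly assuming the new symbol is forced, i.e.\ $|\Sigma|=2$) and does not isolate an overhead for delimiting the list; your bundling of gap and symbol into a single $t_j\le|\Sigma|\,g_j$ and your $O(\log n)$ term address precisely these two points.
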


\begin{proof}
Let $(i_1,\ldots,i_p)$ be the ordered sequence of indices where $u$ and $v$
differ. By hypothesis, $p/n<\alpha$. Put $k_1=i_1$ and $k_j=i_j-i_{j-1}$ for
$2\leq j\leq p$.

We now show that $u$ can be recovered from $v$ and roughly $\alpha
f(1/\alpha) n$ bits more. Indeed $u$ can be computed from
$(v,k_1,\ldots,k_p)$, constructing the string which
coincides with $v$ everywhere but at positions
$k_1,k_1+k_2,\ldots,k_1+\ldots+k_p$, so $K(u)\leqplus
K(v)+K(k_1)+\ldots+K(k_p)\leqplus K(v)+f(k_1)+\ldots+f(k_p)$.

Now, as $f$ is a concave increasing function, one has:
$$
\frac{1}{p}\sum_{j\leq p}f(k_j) \leq
f\left(\frac{1}{p}\sum_{j\leq p}k_j\right) = f\left(\frac{i_p}{p}\right) \leq f\left(\frac{n}{p}\right)
$$

As a result,
$$
\frac{1}{n}K(u) \leq \frac{1}{n}K(v) +
\frac{p}{n}f\left(\frac{n}{p}\right) + \frac{c}{n}
$$ where $c$ is some constant independent of $u,v,n,p$. As $p/n<\alpha<1/2$
and $x\mapsto x f(1/x)$ is increasing for $x\leq 1/2$, one has:
$$
\frac{1}{n}K(u) \leq \frac{1}{n}K(v) + \alpha f(1/\alpha) + \frac{c}{n}
$$

Switching $u$ and $v$ gives the result ($c$ may be changed).
\end{proof}

We are now able to prove the other inequality.

\begin{proof}[Proof of $\Ksym(x,T)\leq \uK(x,T)$]
Fix some computable partition $\xi$. We show that for any $\beta>0$ there is
some $\epsilon>0$ such that for every Martin-L\"of random point $x$,
$\Ksym(x,T|\xi)\leq \uK(x,T,\epsilon)+\beta$. As $\uK(x,T,\epsilon)$
increases as $\epsilon\to 0^+$ and $\beta$ is arbitrary, the inequality
follows.

First take $\alpha<1/2$ such that $\alpha f(1/\alpha)<\beta$, and remark that
$$
\lim_{\epsilon\to 0^+} \mu\left(\overline{(\partial
  \xi)^\epsilon}\right) = \mu(\partial \xi) = 0
$$

Hence there is some $\epsilon$ such that $\mu\left(\overline{(\partial
\xi)^{2\epsilon}}\right)<\alpha$. From a sequence of ideal points we will
reconstruct the symbolic orbit of a random point with a density of errors
less than $\alpha$. Lemma \ref{discr_lemma} will then allow to conclude.

We define an algorithm $\A(\epsilon,i_0,\ldots,i_{n-1})$ with $\epsilon\in
\Q_{>0}$ and $i_0,\ldots,i_{n-1} \in\N$ which outputs a word $a_0\ldots
a_{n-1}$ on the alphabet $\xi$. To compute $a_j$, $\A$ semi-decides in a
dovetail picture:
\begin{itemize}
\item $s_{i_j}\in C$ for every $C\in\xi$,
\item $s \in C$ for every $s\in B(s_{i_j},\epsilon)$ and every $C\in\xi$.
\end{itemize}

The first test which stops provides some $C\in\xi$: put $a_j=C$.

Let $x$ be a random point whose iterates are covered by $\xi$, and
$s_{i_0},\ldots,s_{i_{n-1}}$ be ideal points which $\epsilon$-shadow the
first $n$ iterates of $x$. We claim that $\A$ will halt on
$(\epsilon,i_0,\ldots,i_{n-1})$. Indeed, as $T^jx$ belongs to some
$C\in\xi$, $C\cap B(s_{i_j},\epsilon)$ is a non-empty open set and then
contains at least one ideal point $s$, which will be eventually dealt with.

We now compare the symbolic orbit of $x$ with the symbolic sequence
computed by $\A$. A discrepancy at rank $j$ can appear only if $T^jx\in
(\partial \xi)^{2\epsilon}$. Indeed, if $T^jx\notin (\partial
\xi)^{2\epsilon}$ then $B(T^jx,2\epsilon)\subseteq C$ where $C$ is the cell
$T^jx$ belongs to. As $d(s_{i_j},T^jx)<\epsilon$,
$B(s_{i_j},\epsilon)\subseteq B(x,2\epsilon)\subseteq C$, so the algorithm
gives the right cell.

Now, as $x$ is typical,
$$
\limsup_{n\to\infty} \frac{1}{n}\#\{j<n: T^jx\in (\partial
  \xi)^{2\epsilon}\}\leq \mu\left(\overline{(\partial
  \xi)^{2\epsilon}}\right)<\alpha
$$ so there is some $n_0$ such that for all $n\geq n_0$,
$\frac{1}{n}\#\{j<n: T^jx\in (\partial \xi)^{2\epsilon}\}<\alpha$. This
implies that for all $n\geq n_0$ and ideal points
$s_{i_0},\ldots,s_{i_{n-1}}$ which $\epsilon$-shadow the first $n$ iterates
of $x$ and with minimal complexity, the algorithm
$\A(\epsilon,i_0,\ldots,i_{n-1})$ produces a symbolic string $u$ which
differs from the symbolic orbit $v$ of $x$ of length $n$ with a density of
errors $<\alpha$. As $K(u)\leqplus K(\epsilon)+\K_n(x,T,\epsilon)$ and
$\alpha f(1/\alpha)<\beta$, applying lemma \ref{discr_lemma} gives:
\begin{eqnarray*}
\frac{1}{n}K(\xi_n(x)) = \frac{1}{n}K(v) & \leq & \frac{1}{n}K(u) +
\alpha f(1/\alpha) + \frac{c}{n} \\ & \leq &
\frac{1}{n}\left(\K_n(x,T,\epsilon)+K(\epsilon)+c'\right) + \beta + \frac{c}{n}
\end{eqnarray*}
where $c'$ is independent of $n$. Taking the $\limsup$ as $n\to\infty$
gives:
$$
\Ksym(x,T|\xi) \leq \uK(x,T,\epsilon) + \beta
$$
\end{proof}

Combining theorems \ref{shadsym} and \ref{final}, we obtain a version of
Brudno's theorem (theorem \ref{brudno_theorem}) for Martin-L\"of random
points.

\begin{corollary}
Let $T$ be an ergodic endomorphism of the computable probability space
  $(X,\mu)$, where $X$ is compact. Then for every Martin-L\"of
  random point $x$:
$$
\uK(x,T)=h_\mu(T)
$$
\end{corollary}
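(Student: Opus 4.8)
The plan is to read off this corollary as an immediate composition of the two equalities already established, since all the analytic work has been absorbed into Theorems \ref{shadsym} and \ref{final}. The first thing I would check is that the hypotheses of the corollary are exactly strong enough to invoke both inputs. The corollary assumes that $T$ is an ergodic endomorphism of the computable probability space $(X,\mu)$ with $X$ compact, and that $x$ is Martin-L\"of random. These are precisely the hypotheses of Theorem \ref{shadsym}, which under compactness gives
\[
\uK(x,T)=\Ksym(x,T).
\]
The compactness enters only through the choice, in the proof of that theorem, of a computable partition of arbitrarily small diameter, so it is essential here and cannot be dropped.

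Next I would apply Theorem \ref{final}, whose hypotheses are in fact weaker: it requires only that $T$ be an ergodic endomorphism of a computable probability space and that $x$ be Martin-L\"of random, with no compactness assumption. Since our setting certainly satisfies these (compactness is an extra assumption we simply do not need for this step), Theorem \ref{final} yields
\[
\Ksym(x,T)=h_\mu(T).
\]

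Chaining the two displayed equalities through the common middle term $\Ksym(x,T)$ gives $\uK(x,T)=h_\mu(T)$ for every Martin-L\"of random $x$, which is the claim. There is no genuine obstacle to overcome: the entire content lives in the two cited theorems, whose proofs carry, respectively, the combinatorial shadowing estimate of Lemma \ref{discr_lemma} together with statistical typicality (Theorem \ref{birkhoff_theorem}), and the Shannon--McMillan--Breiman-type equality for random points (Corollary \ref{Shannon_random}) together with the generating property of computable partitions (Corollary \ref{corollary_generating} and Proposition \ref{petersen}). The only point warranting a moment's care is the alignment of hypotheses just described, namely that compactness is required for the $\uK$--$\Ksym$ equivalence but not for the $\Ksym$--$h_\mu$ identity, so that both are simultaneously available precisely under the compactness assumption of the corollary.
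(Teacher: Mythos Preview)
Your proposal is correct and matches the paper's own argument exactly: the paper simply states that the corollary follows by combining Theorems \ref{shadsym} and \ref{final}, which is precisely the chaining $\uK(x,T)=\Ksym(x,T)=h_\mu(T)$ you describe. Your additional remarks on where compactness is actually needed are accurate and helpful, but the mathematical content is identical.
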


\section{Topological entropies}\label{section_topological_entropies}

%
%

Bowen's definition of topological entropy is reminiscent of the capacity (or box counting
dimension) of a totally bounded subset of a metric space. In order to find relations with orbit complexity we will also use another characterization of
topological entropy, expressing it as a kind of Hausdorff dimension. We
first present Bowen's definition.

In this section, $X$ is a metric space and $T:X\to X$ a continuous map.

\subsection{Entropy as a capacity}
We recall the definition: for $n\geq 0$, let us define the distance
$d_n(x,y)=\max\{d(T^ix,T^iy):0\leq i<n\}$ and the Bowen ball
$B_n(x,\epsilon)=\{y:d_n(x,y)<\epsilon\}$, which is open by continuity of
$T$. Given a totally bounded set $Y\subseteq X$ and numbers $n\geq
0,\epsilon>0$, let $N(Y,n,\epsilon)$ be the minimal cardinality of a cover
of $Y$ by Bowen balls $B_n(x,\epsilon)$. A set of points $E$ such that
$\{B_n(x,\epsilon):x\in E\}$ is a cover of $Y$ is also called an
$(n,\epsilon)$-spanning set of $Y$. One then defines:

$$
h_1(T,Y,\epsilon)=\limsup_{n\to \infty}\frac{\log N(Y,n,\epsilon)}{n}
$$
which is non-decreasing as $\epsilon\to 0$, so the following limit exists:
$$
h_1(T,Y)=\lim_{\epsilon\to 0}h_1(Y,T,\epsilon).
$$
When $X$ is compact, the \defin{topological entropy} of $T$ is $h(T)=h_1(T,X)$. It measures the exponential growth-rate of the number of distinguishable orbits of the system.

\begin{remark}
The topological entropy can be defined using separated sets instead of open
covers: a subset $A$ of $X$ is $(n,\epsilon)$-separated if for any distinct points
$x,y\in A$, $d_n(x,y)>\epsilon$. Let us define $M(Y,n,\epsilon)$ as the maximal
cardinality of an $(n,\epsilon)$-separated subset of $Y$. It is easy to see
that $M(Y,n,2\epsilon)\leq N(Y,n,\epsilon)\leq M(Y,n,\epsilon)$, and
hence $h_1(T,Y)$ can be alternatively defined using $M(Y,n,\epsilon)$ in place of $N(Y,n,\epsilon)$.
\end{remark}

\subsection{Entropy as a dimension}
It is possible to define a topological entropy which is an analog of Hausdorff
dimension. His definition coincides with the classical one in the
compact case. Hausdorff dimension has stronger stability properties than
box dimension, which has important consequences, as we will see in what
follows. We refer the reader to \cite{Pes98}, \cite{HK02} for more details.

Let $X$ be a metric space and $T:X\to X$ a continuous map. The
$\epsilon$-size of $E\subseteq X$ is $2^{-s}$ where 
$$
s=\sup\{n\geq 0: \diam(T^iE)\leq\epsilon \mbox{ for }0\leq i<n\}.
$$
It measures how long the orbits starting from $E$ are $\epsilon$-close. As $\epsilon$ decreases, the $\epsilon$-size of $E$ is non-decreasing. The $2\epsilon$-size of a Bowen ball $B_n(x,\epsilon)$ is less than $2^{-n}$.

In a way that is reminiscent from the definition of Hausdorff measure, let
us define
$$
m^s_\delta(Y,\epsilon)=\inf_\G\left\{\sum_{U\in\G}(\epsilon\text{-size}(U))^s\right\}
$$ where the infimum is taken over all countable covers $\G$ of $Y$ by open
sets of $\epsilon$-size $<\delta$. This quantity is monotonically
increasing as $\delta$ tends to $0$, so the limit
$m^s(Y,\epsilon):=\lim_{\delta\to 0^+}m^s_\delta(Y,\epsilon)$ exists and is
a supremum. There is a critical value $s_0$ such that
$m^s(Y,\epsilon)=\infty$ for $s<s_0$ and $m^s(Y,\epsilon)=0$ for
$s>s_0$. Let us define $h_2(T,Y,\epsilon)$ as this critical value:
\[
h_2(T,Y,\epsilon) := \inf\left\{s: m^s(Y,\epsilon)=0\right\} =  \sup\left\{s:m^s(Y,\epsilon)=\infty\right\}.
\]
As less and less covers are allowed when $\epsilon\to 0$ (the $\epsilon$-size of sets does not decrease), the following limit exists
\[
h_2(T,Y) := \underset{\epsilon\to 0^+}{\lim} h_2(T,Y,\epsilon)
\]
and is a supremum. In \cite{Pes98}, it is proved that:


\begin{theorem}\label{theorem_pesin}
When $Y$ is a $T$-invariant compact set, $h_1(T,Y)=h_2(T,Y)$.
\end{theorem}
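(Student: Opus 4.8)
I would establish the two inequalities $h_2(T,Y)\le h_1(T,Y)$ and $h_1(T,Y)\le h_2(T,Y)$ at each fixed scale and then let $\epsilon\to 0$, working throughout with the capacity form of $h_1$, i.e. the counting function $N(Y,n,\epsilon)$. The dictionary between the two pictures is the elementary fact, already recorded above, that a Bowen ball $B_n(x,\epsilon)$ has $2\epsilon$-size at most $2^{-n}$, and, conversely, that an open set $U$ of $\epsilon$-size $2^{-m}$ has $d_m$-diameter at most $\epsilon$, so that $U\cap Y\subseteq B_m(y,2\epsilon)$ for any $y\in U\cap Y$.

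For the easy inequality $h_2(T,Y,2\epsilon)\le h_1(T,Y,\epsilon)$ I would argue directly. Fix $s>h_1(T,Y,\epsilon)$ and an intermediate $s'$; then for all large $n$ there is a cover of $Y$ by $N(Y,n,\epsilon)\le 2^{ns'}$ open Bowen balls $B_n(x_i,\epsilon)$, each of $2\epsilon$-size $<2^{-n}$. Once $2^{-n}<\delta$ these are admissible covers for $m^s_\delta(Y,2\epsilon)$, so $m^s_\delta(Y,2\epsilon)\le N(Y,n,\epsilon)\,2^{-ns}\le 2^{-n(s-s')}$, which tends to $0$ since $\delta\to 0$ forces $n\to\infty$. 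Hence $m^s(Y,2\epsilon)=0$ and $h_2(T,Y,2\epsilon)\le s$; letting $s\downarrow h_1(T,Y,\epsilon)$ and then $\epsilon\to 0$ yields $h_2(T,Y)\le h_1(T,Y)$.

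The reverse inequality $h_1(T,Y,2\epsilon)\le h_2(T,Y,\epsilon)$ is where compactness and invariance must be used, and this is the part I expect to be the main obstacle. Fix $s>h_2(T,Y,\epsilon)$, so $m^s(Y,\epsilon)=0$, and choose a cover of $Y$ by open sets of $\epsilon$-size $2^{-n_j}$ with $\sum_j 2^{-n_js}\le 1$; keeping the scale $\delta<1$ guarantees every $n_j\ge 1$. By compactness I would pass to a finite subcover $U_1,\dots,U_M$. The plan is then to splice these sets along orbits: to a word $w=(j_1,\dots,j_\ell)$ I associate $S_w=U_{j_1}\cap T^{-n_{j_1}}U_{j_2}\cap T^{-(n_{j_1}+n_{j_2})}U_{j_3}\cap\dots$, whose points agree to within $\epsilon$ in the $d_{D_w}$-metric, where $D_w=n_{j_1}+\dots+n_{j_\ell}$, so that $S_w\cap Y\subseteq B_{D_w}(y_w,2\epsilon)$ for some $y_w\in Y$. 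Forward invariance $T(Y)\subseteq Y$ shows by induction that the $S_w$ of each fixed length cover $Y$. For a target depth $n$ I take $W_n$, the prefix-free family of words at which the running depth first reaches $n$; then $\{B_n(y_w,2\epsilon)\}_{w\in W_n}$ covers $Y$, so $N(Y,n,2\epsilon)\le|W_n|$.

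To bound $|W_n|$ I would invoke a Kraft-type inequality for prefix-free families (disjoint cylinders carry disjoint sub-probability mass, exactly as for the prefix-free codes used earlier): since $\sum_j 2^{-n_js}\le 1$ and $W_n$ is prefix-free, $\sum_{w\in W_n}2^{-D_ws}\le 1$. On $W_n$ one has $n\le D_w<n+\max_j n_j$, whence $|W_n|\le 2^{(n+\max_j n_j)s}$ and $\frac1n\log N(Y,n,2\epsilon)\le s\,(1+\max_j n_j/n)\to s$. Thus $h_1(T,Y,2\epsilon)\le s$, and $\epsilon\to 0$ finishes the proof. The delicate point, and where I expect the real work to lie, is converting a cover by sets of wildly varying depth into an efficient cover by Bowen balls of one fixed depth $n$: this is precisely where both hypotheses enter — compactness to make the weight vector $(2^{-n_js})_j$ finite and summable, and forward invariance to refine the cover to arbitrary depth along orbits. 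Verifying that $S_w\cap Y$ lies in a single Bowen ball and that the $S_w$ cover $Y$ at every level are the routine but essential checks underpinning the counting bound.
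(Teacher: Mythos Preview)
The paper does not supply its own proof of this statement; it is quoted from Pesin's monograph. Your argument is correct and is essentially the standard one: the easy inequality uses minimal $(n,\epsilon)$-spanning sets as admissible covers for $m^s_\delta$, while the hard inequality refines a finite weighted cover along orbits (this is where forward invariance enters) and bounds the resulting prefix-free family $W_n$ via a Kraft-type inequality (compactness makes $\max_j n_j$ finite, so the overshoot $D_w-n$ is uniformly bounded). One small point you leave implicit is the possibility that some $U_j$ in the finite subcover has $\epsilon$-size $0$; such a set may be assigned any large finite depth without disturbing either the weight bound or the splicing, so this is harmless.
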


In particular, if the space $X$ is compact, then $h(T)=h_1(T,X)=h_2(T,X)$.
\subsection{Orbit complexity vs entropy}
Now we prove the main theorem of the section:

\begin{theorem}[Topological entropy vs orbit complexity]\label{theorem_top_entropy}
Let $X$ be a compact computable metric space, and $T:X\to X$ a computable map. Then
$$
h(T)=\sup_{x\in X}\lK(x,T)=\sup_{x\in X}\uK(x,T).
$$
\end{theorem}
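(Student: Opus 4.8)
The plan is to prove two inequalities and sandwich the three quantities together. Since $\liminf\le\limsup$ we always have $\lK(x,T)\le\uK(x,T)$, hence $\sup_x\lK(x,T)\le\sup_x\uK(x,T)$. The upper bound $\sup_x\uK(x,T)\le h(T)$ is exactly the second part of Brudno's theorem (Theorem \ref{brudno_theorem}), valid for any continuous map on a compact space and in particular for our computable $T$: its proof is the direct covering argument (cover $X$ by the $N(X,n,\epsilon)$ Bowen balls of a minimal spanning set, describe the orbit of $x$ by the index of a ball it meets, so $\K_n(x,T,\epsilon)$ grows at rate $\le h_1(T,X,\epsilon)\to h(T)$), and it uses neither measure nor the variational principle. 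Everything therefore reduces to the reverse inequality $h(T)\le\sup_x\lK(x,T)$.

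For this direction I would use the dimension-like characterization. By Theorem \ref{theorem_pesin}, $h(T)=h_2(T,X)$. Fix $s<h(T)$ and choose $s'$ with $s<s'<h(T)$; since $h_2(T,X,\epsilon)$ increases to $h(T)$ as $\epsilon\to0$, there is some $\epsilon>0$ with $h_2(T,X,2\epsilon)>s'$. The crucial observation is that it suffices to exhibit a \emph{single} point $x$ with $\lK(x,T,\epsilon)\ge s$: indeed $\lK(x,T)\ge\lK(x,T,\epsilon)$ for every $\epsilon$, so such a point already gives $\sup_x\lK(x,T)\ge s$, and letting $s\to h(T)$ finishes. Thus I only need to show that the bad set $G=\{x:\lK(x,T,\epsilon)<s\}$ is not all of $X$, and I will do this by bounding its $h_2$-dimension.

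The heart of the argument is a counting/covering estimate. If $x\in G$ then $\K_n(x,T,\epsilon)<sn$ for infinitely many $n$, so for each such $n$ there is a pseudo-orbit $(i_0,\dots,i_{n-1})$ with $K(i_0,\dots,i_{n-1})<sn$ and $d(s_{i_j},T^jx)<\epsilon$ for $j<n$; equivalently $x$ lies in the open set $V_{\vec\imath}=\bigcap_{j<n}T^{-j}B(s_{i_j},\epsilon)$. Any two points of $V_{\vec\imath}$ stay within $2\epsilon$ up to time $n$, so $\diam(T^jV_{\vec\imath})\le2\epsilon$ for $j<n$ and hence $2\epsilon\text{-size}(V_{\vec\imath})\le2^{-n}$; moreover, because $K$ is self-delimiting there are fewer than $2^{sn+1}$ tuples with $K(\vec\imath)<sn$. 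Consequently, for every $N$ the family $\{V_{\vec\imath}:n\ge N,\ K(\vec\imath)<sn\}$ covers $G$ by open sets of $2\epsilon$-size $<2^{-N}$, whence
\[
m^{s'}_{2^{-N}}(G,2\epsilon)\ \le\ \sum_{n\ge N}2^{sn+1}\,(2^{-n})^{s'}\ =\ 2\sum_{n\ge N}2^{-(s'-s)n},
\]
and the right-hand side tends to $0$ as $N\to\infty$ since $s'>s$. As $m^{s'}_\delta$ increases as $\delta\to0$, this forces $m^{s'}(G,2\epsilon)=0$, i.e. $h_2(T,G,2\epsilon)\le s'<h_2(T,X,2\epsilon)$. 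By monotonicity of $h_2(T,\cdot,2\epsilon)$ under inclusion, $G\ne X$, which yields the desired point and completes the lower bound.

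The step I expect to be most delicate is this last covering estimate, precisely because it must be arranged so that one fixed $\epsilon$ works: the inequality $\lK(x,T)\ge\lK(x,T,\epsilon)$ is what lets me avoid any diagonal argument over $\epsilon\to0$, and the bookkeeping of the three parameters $s<s'<h_2(T,X,2\epsilon)$ together with the passage from radius-$\epsilon$ balls to $2\epsilon$-sizes is where the argument can easily go wrong. A secondary point to watch is the self-delimiting counting bound $\#\{\vec\imath:K(\vec\imath)<sn\}<2^{sn+1}$, which is what makes the geometric series converge and is the only place the strict choice $s'>s$ is used.
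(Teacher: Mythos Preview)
Your argument is correct. The overall architecture matches the paper's---use Pesin's identification $h(T)=h_2(T,X)$ for the lower bound and a spanning/covering argument for the upper bound---but you streamline the lower bound. The paper introduces an auxiliary notion of \emph{effective} topological entropy $h^\eff_2(T,Y)$ (via r.e.\ null $s$-covers) and proves the pointwise identity $h^\eff_2(T,Y)=\sup_{x\in Y}\lK(x,T)$ for \emph{every} subset $Y$, using the coding theorem for one direction; the main theorem then follows from the chain $h_2(T,X)\le h^\eff_2(T,X)=\sup_x\lK(x,T)$. You instead bound the classical $h_2$ of the bad set $G$ directly via the counting estimate $\#\{\vec\imath:K(\vec\imath)<sn\}<2^{sn+1}$, which is precisely the non-effective content of the paper's Lemma~\ref{h_less_k} (their r.e.\ set $E=\{(i,n,p):K(i,n,p)<\beta n\}$ plays the same role as your family of $V_{\vec\imath}$). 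Your route is more elementary---no coding theorem, no effective-entropy machinery---at the price of not isolating the general identity for arbitrary $Y$, which the paper presents as an analogue of the pointwise characterization of effective Hausdorff dimension and of interest in its own right. For the upper bound you simply invoke Brudno's theorem, while the paper gives a self-contained construction of an effective $(n,\epsilon)$-spanning/separated set (Lemma~\ref{lemma_upper_complexity_entropy}); the underlying covering idea is the same.
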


In order to prove this theorem, we define an effective version of the
topological entropy, which is strongly related to the complexity of
orbits.

\subsubsection{Effective entropy as an effective dimension}
Before defining an effective version, we give a simple characterization
which will accommodate to effectivisation.

\begin{definition}
A \defin{null $\boldsymbol{s}$-cover} of $Y\subseteq X$ is a set
$E\subseteq \N^3$ such that:
\begin{enumerate}
\item $\sum_{(i,n,p)\in E} 2^{-sn}<\infty$,
\item for each $k,p\in\N$, the set $\{B_n(s_i,2^{-p}): (i,n,p)\in E,n\geq k\}$ is a cover of $Y$.
\end{enumerate}
\end{definition}

The idea is simple: every null $s$-cover induces open covers of arbitrary
small size and arbitrary small weight. Remark that any null $s$-cover of
$Y$ is also a null $s'$-cover for all $s'>s$.

\begin{lemma}\label{cover_lemma}
$h_2(T,Y)=\inf\{s:Y\mbox{ has a null $s$-cover}\}$.
\end{lemma}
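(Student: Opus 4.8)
The plan is to prove the two inequalities separately, using as the central dictionary the two facts already recorded before the statement: a Bowen ball $B_n(x,\epsilon)$ has $2\epsilon$-size less than $2^{-n}$, and the $\epsilon$-size of a set is non-decreasing as $\epsilon\to 0$. Write $\tilde h := \inf\{s : Y \text{ has a null } s\text{-cover}\}$.

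For the inequality $h_2(T,Y)\le\tilde h$, I would suppose $E$ is a null $s$-cover and fix $\epsilon>0$, then choose $p$ with $2^{-p+1}\le\epsilon$. Each ball $B_n(s_i,2^{-p})$ with $(i,n,p)\in E$ has $2^{-p+1}$-size $<2^{-n}$, hence $\epsilon$-size $<2^{-n}$ by monotonicity. Given $\delta>0$, pick $k$ with $2^{-k}\le\delta$; by condition (2) the family $\{B_n(s_i,2^{-p}):(i,n,p)\in E,\ n\ge k\}$ covers $Y$ and each member has $\epsilon$-size $<2^{-n}\le 2^{-k}\le\delta$, so it is admissible for $m^s_\delta(Y,\epsilon)$. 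This gives $m^s_\delta(Y,\epsilon)\le\sum_{(i,n,p)\in E,\,n\ge k}2^{-sn}$, a tail of the convergent sum in condition (1). Since $m^s_\delta$ is monotone in $\delta$ and this tail tends to $0$ as $k\to\infty$, one gets $m^s(Y,\epsilon)=0$, whence $h_2(T,Y,\epsilon)\le s$; as $\epsilon$ is arbitrary, $h_2(T,Y)\le s$, and infimizing over null $s$-covers yields $h_2(T,Y)\le\tilde h$.

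For the reverse inequality $\tilde h\le h_2(T,Y)$, fix $s>h_2(T,Y)$, so that $m^s(Y,2^{-(p+1)})=0$ for every $p$. For each pair $(p,k)$ this gives an open cover $\G_{p,k}$ of $Y$ by sets of $2^{-(p+1)}$-size $<2^{-k}$ with $\sum_{U\in\G_{p,k}}(2^{-(p+1)}\text{-size}(U))^s<2^{-(p+k)}$. The heart of the argument is to replace each open set by a Bowen ball centered at an \emph{ideal} point. For nonempty $U\in\G_{p,k}$, let $n_U$ be the integer with $2^{-(p+1)}\text{-size}(U)=2^{-n_U}$ (if the size is $0$, any large $n_U$ works), so $n_U>k$ and $\diam(T^iU)\le 2^{-(p+1)}$ for $i<n_U$. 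Picking $x\in U$ and choosing an ideal point $s_i$ with $d_{n_U}(x,s_i)<2^{-p-1}$, the triangle inequality gives, for every $y\in U$, $d_{n_U}(s_i,y)\le d_{n_U}(s_i,x)+d_{n_U}(x,y)<2^{-p-1}+2^{-(p+1)}=2^{-p}$, so $U\subseteq B_{n_U}(s_i,2^{-p})$. Collecting the triples $(i,n_U,p)$ over all $U\in\G_{p,k}$ and all $(p,k)$ defines $E\subseteq\N^3$.

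The main obstacle is exactly this center-replacement, and it is resolved by observing that for a \emph{fixed} $n_U$ the map $y\mapsto d_{n_U}(x,y)$ is a maximum of finitely many continuous functions, hence continuous, so density of the ideal points lets one choose $s_i$ as required; the factor-$2$ loss from radius $2^{-(p+1)}$ to $2^{-p}$ is harmless because it washes out in the limit over $p$. It then remains to check that $E$ is a null $s$-cover: condition (2) holds since for each fixed $p,k$ the balls arising from $\G_{p,k}$ already cover $Y$ and all have order $n_U>k$; condition (1) holds because each weight equals $2^{-sn_U}=(2^{-(p+1)}\text{-size}(U))^s$, so $\sum_{(i,n,p)\in E}2^{-sn}\le\sum_{p,k}2^{-(p+k)}<\infty$. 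Thus $\tilde h\le s$ for every $s>h_2(T,Y)$, giving $\tilde h\le h_2(T,Y)$ and, combined with the first part, the desired equality.
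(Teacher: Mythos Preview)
Your proof is correct and follows essentially the same two-inequality structure as the paper. The only noteworthy difference is in the center-replacement step for $\tilde h\le h_2(T,Y)$: the paper simply takes any ideal point $s_i$ \emph{inside} the open set $U$ (density suffices) and uses $\diam(T^jU)\le\epsilon$ directly to get $U\subseteq B_n(s_i,\epsilon)$, whereas you work at scale $2^{-(p+1)}$, pick an ideal point $d_{n_U}$-close to some $x\in U$ via continuity of $T$, and recover scale $2^{-p}$ by the triangle inequality. Your route handles the strict inequality in the definition of the open Bowen ball more cleanly; conversely, the paper is more explicit about the zero-size case, choosing $n\ge (j+k+p)/s$ so that those terms contribute a summable series, which is exactly what your ``any large $n_U$ works'' needs in order for the claimed bound $\sum_{(i,n,p)\in E}2^{-sn}\le\sum_{p,k}2^{-(p+k)}$ to hold literally.
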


\begin{proof}
Suppose $s>h_2(T,Y)$. We fix $p,k\in\N$ and put $\epsilon=2^{-p}$ and
$\delta=2^{-k}$. As $m^s_{\delta}(Y,\epsilon)=0$, there is a cover
$(U_{j,k,p})_j$ of $Y$ by open sets of $\epsilon$-size $\delta_{j,k,p}<\delta$ with
$\sum_j \delta_{j,k,p}^s<2^{-(k+p)}$. Let $s_i$ be any ideal point in
$U_{j,k,p}$. If $\delta_{j,k,p}>0$, then $\delta_{j,k,p}=2^{-n}$ for some
$n$. If $\delta_{j,k,p}=0$, take any $n\geq (j+k+p)/s$. In both
cases, $U_{j,k,p}$ is included in the Bowen ball $B_n(s_i,\epsilon)$. We
define $E_{k,p}$ as the set of $(i,n,p)$ obtained this way, and
$E=\bigcup_{k,p}E_{k,p}$. By construction,
for each $k,p$, $\{B_n(s_i,2^{-p}):(i,n,p)\in E,n\geq k\}$ is a cover of $Y$. Moreover, $\sum_{(i,n,p)\in E_{k,p}}2^{-sn}\leq \sum_j
\delta_{j,k,p}^s +\sum_j 2^{-(j+k+p)}\leq 2^{-(k+p)+2}$, so $\sum_{(i,n,p)\in E}2^{-sn}<\infty$.

Conversely, if $Y$ has a null $s$-cover $E$, take $\epsilon,\delta>0$ and $p,k$ such
that $\epsilon>2^{-p+1}$ and $\delta>2^{-k}$. For all $k'\geq k$, the family
$\{B_n(s_i,2^{-p}):(i,n,p)\in E,n\geq k'\}$ is a cover of $Y$ by open
sets of $\epsilon$-size smaller than $2^{-n}\leq\delta$. Moreover,
$\sum_{(i,n,p)\in E,n\geq k'} 2^{-sn}$ tends to $0$ as $k'$ grows, so
$m^s_\delta(Y,\epsilon)=0$. It follows that $s\geq h_2(T,Y)$.
\end{proof}

By an \emph{effective} null $s$-cover, we mean a null $s$-cover $E$ which
is a r.e. subset of $\N^3$.

\begin{definition}
The \defin{effective topological entropy} of $T$ on $Y$ is defined by
$$
h^\eff_2(T,Y)=\inf\{s:Y\mbox{ has an effective null $s$-cover}\}
$$
\end{definition}

As less null $s$-covers are allowed in the effective version, $h_2(T,Y)\leq
h^\eff_2(T,Y)$. Of course, if $Y\subseteq Y'$ then $h^\eff_2(T,Y)\leq
h^\eff_2(T,Y')$. We now prove:

\begin{theorem}[Effective topological entropy vs lower orbit complexity]\label{theorem_eff_top_ent_orbit_complexity}
Let $X$ be an effective metric space and $T:X\to X$ a continuous map. For all $Y\subseteq X$,
$$
h^\eff_2(T,Y)=\sup_{x\in Y} \lK(x,T)
$$
\end{theorem}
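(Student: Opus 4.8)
The plan is to prove the two inequalities separately, treating each bound $s$ through rational approximations so that the weightings and thresholds involved are computable.

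For $\sup_{x\in Y}\lK(x,T)\le h^\eff_2(T,Y)$, I would fix a rational $s>h^\eff_2(T,Y)$ and an effective null $s$-cover $E\subseteq\N^3$. Since $C:=\sum_{(i,n,p)\in E}2^{-sn}<\infty$, pick an integer $M$ with $2^M\ge C$ and set $P(i,n,p)=2^{-M-sn}$ on $E$ and $0$ elsewhere. As $E$ is r.e. and $s$ is rational, $P$ is lower semi-computable with $\sum P\le 1$, so the coding theorem (Theorem \ref{coding_theorem}) gives $K(i,n,p)\le sn+c$ for a constant $c$ depending only on $E$. Now fix $x\in Y$ and $p$. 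The covering clause of the null $s$-cover provides, for every $k$, a triple $(i,n,p)\in E$ with $n\ge k$ and $x\in B_n(s_i,2^{-p})$; the latter means $d(T^js_i,T^jx)<2^{-p}$ for $j<n$, so the single orbit of $s_i$ shadows $x$ and hence $\K'_n(x,T,2^{-p})\le K(i)\leqplus K(i,n,p)\le sn+c$. Dividing by $n$ and letting $k\to\infty$ yields $\liminf_n\frac1n\K'_n(x,T,2^{-p})\le s$. Since the single-point and pseudo-orbit complexities define the same growth rates (up to a factor $2$ in $\epsilon$ and an $O(\log n)$ overhead that vanishes after division by $n$), this gives $\lK(x,T,2^{-p+1})\le s$; letting $p\to\infty$ and then letting $x$ range over $Y$ gives $\sup_{x\in Y}\lK(x,T)\le s$, and the infimum over rational $s$ finishes this half, which uses no computability of $T$.

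For $h^\eff_2(T,Y)\le\sup_{x\in Y}\lK(x,T)$, I would choose rationals $\sup_{x\in Y}\lK(x,T)<s_1<s_2<s$ and define
\[
E=\{(i,n,p)\in\N^3:K(i,n,p)<s_2 n\}.
\]
This set is r.e. because the relation $K(o)<t$ is semi-decidable (enumerate the domain of the universal interpreter and watch for a short program producing $o$), and $s_2$ is rational. For the weight, for each fixed $n$ there are fewer than $2^{s_2 n+1}$ objects of complexity $<s_2 n$, hence fewer than $2^{s_2 n+1}$ triples $(i,n,p)$ in $E$; therefore $\sum_{(i,n,p)\in E}2^{-sn}\le\sum_n 2^{s_2 n+1}2^{-sn}=2\sum_n 2^{-(s-s_2)n}<\infty$, which is exactly where the gap $s_2<s$ is used. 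It then remains to verify the covering clause: given $x\in Y$ and $p$, the hypothesis $\lK(x,T)<s_1$ furnishes, for infinitely many $n$, a cheap pseudo-orbit $2^{-p}$-shadowing $x$ of complexity $<s_1 n$; converting it into a single ideal point $s_i$ with $x\in B_n(s_i,2^{-p})$ and $K(i,n,p)\le s_1 n+O(\log n)<s_2 n$ for large $n$ places $(i,n,p)$ in $E$ with $n$ arbitrarily large. Thus $E$ is an effective null $s$-cover, so $h^\eff_2(T,Y)\le s$, and the infimum over rational $s$ concludes.

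The hard part will be the covering step of the second direction: turning a low-complexity pseudo-orbit into a single ideal point $s_i$ with $x\in B_n(s_i,2^{-p})$ and controlled code length. Existence of \emph{some} ideal point in the open set $\bigcap_{j<n}T^{-j}B(s_{i_j},\epsilon)\ni x$ is immediate from density (and the triangle inequality then gives the doubled-radius shadowing), but controlling its Kolmogorov complexity requires locating it \emph{algorithmically} inside that constructive open set — precisely the content of the earlier remark converting $\K_n$ into $\K'_n$, which is where computability of $T$ naturally enters. I expect the delicate bookkeeping to be twofold: keeping the logarithmic overheads $K(n)$ and $K(\epsilon)$ harmless after division by $n$, and managing the factor-of-two slack in $\epsilon$ together with the rate gap $s_1<s_2<s$ so that the finiteness and covering clauses hold simultaneously.
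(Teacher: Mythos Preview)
Your proposal is correct and follows the same two-lemma strategy as the paper, with the same r.e.\ set $E=\{(i,n,p):K(i,n,p)<\beta n\}$ for the upper bound on $h^\eff_2$ and the coding theorem applied to an effective null $s$-cover for the lower bound. The paper streamlines two points you may want to adopt: the weight condition for $E$ is obtained in one line via Kraft, $\sum_{(i,n,p)\in E}2^{-\beta n}\leq\sum_{(i,n,p)}2^{-K(i,n,p)}\leq 1$, so a single gap $\alpha<\beta$ suffices in place of your $s_1<s_2<s$; and both directions are argued directly with the Bowen-ball form of $\lK$ (i.e.\ $\lK(x,T,2^{-p})=\liminf_n\frac{1}{n}\min\{K(i,n,p):x\in B_n(s_i,2^{-p})\}$, justified by the earlier Remarks), so the pseudo-orbit/single-point conversion you flag as the ``hard part'' never appears explicitly. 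Your observation that this conversion uses computability of $T$ is well taken---the paper's proof tacitly leans on those Remarks even though the theorem is stated for merely continuous $T$.
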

which implies in particular that $h^\eff_2(T,\{x\})=\lK(x,T)$: the
restriction of the system to a single orbit may have \emph{positive}
effective topological entropy.

This kind of result has already been obtained for the Hausdorff dimension
of subsets of the Cantor space, proving that the effective dimension of a
set $A$ is the supremum of the lower growth-rate of Kolmogorov complexity
of sequences in $A$ (which corresponds to theorem
\ref{theorem_eff_top_ent_orbit_complexity} for sub-shifts). This remarkable
property is a counterpart of the countable stability property of Hausdorff
dimension ($\dim Y=\sup_i \dim Y_i$ when $\bigcup_i Y_i=Y$) (see
\cite{CaiHar94}, \cite{May01}, \cite{Lut03}, \cite{Rei04}, \cite{Sta05}).

Theorem \ref{theorem_eff_top_ent_orbit_complexity} is a direct consequence of the two following lemmas.

\begin{lemma}\label{h_less_k}
Let $\alpha\geq 0$ and $Y_\alpha=\{x:\lK(x,T)\leq\alpha\}$. One has $h^\eff_2(T,Y_\alpha)\leq \alpha$.
\end{lemma}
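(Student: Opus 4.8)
The plan is to show that for every $s>\alpha$ the set $Y_\alpha$ admits an effective (r.e.) null $s$-cover; letting $s$ decrease to $\alpha$ then yields $h^\eff_2(T,Y_\alpha)\le\alpha$. First I would fix a rational $\beta$ with $\alpha<\beta<s$. Using the convention of the second remark on orbit complexity, that the complexity $K(B_n(s_i,2^{-p}))$ of a Bowen ball may be taken to be $K(i,n,p)$, I would simply set
\[
E=\{(i,n,p)\in\N^3 : K(i,n,p)<\beta n\}.
\]
Since the predicate ``$K(z)<m$'' is semi-decidable (dovetail the universal interpreter and accept as soon as some program of length $<m$ outputs $z$), the set $E$ is r.e., so it is a legitimate candidate for an effective null $s$-cover. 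It then remains to check the two defining conditions.

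For the weight condition (1) I would argue by counting. For each fixed $n$ there are fewer than $2^{\beta n+1}$ binary strings of length $<\beta n$, hence fewer than $2^{\beta n+1}$ objects $z$ with $K(z)<\beta n$, and in particular fewer than $2^{\beta n+1}$ triples $(i,n,p)\in E$ with that value of $n$. Therefore
\[
\sum_{(i,n,p)\in E}2^{-sn}\le\sum_{n}2^{\beta n+1}\,2^{-sn}=2\sum_n 2^{-(s-\beta)n}<\infty,
\]
the geometric series converging precisely because $s>\beta$.

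For the covering condition (2), fix $k,p\in\N$ and $x\in Y_\alpha$. Since $\lK(x,T)\le\alpha<\beta$, reading $\lK$ through its single-orbit reformulation (the remarks show $\K_n$ and $\K'_n$ define the same quantities and that $K(i)$ may be replaced by $K(i,n,p)$), for the precision $\epsilon=2^{-p}$ one has $\liminf_n\frac1n\min\{K(i,n,p):d_n(s_i,x)<2^{-p}\}\le\alpha<\beta$. Hence there are infinitely many $n$ admitting an ideal point $s_i$ with $d_n(s_i,x)<2^{-p}$ and $K(i,n,p)<\beta n$; choosing such an $n\ge k$ produces a triple $(i,n,p)\in E$ with $n\ge k$ and $x\in B_n(s_i,2^{-p})$. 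Thus $\{B_n(s_i,2^{-p}):(i,n,p)\in E,\ n\ge k\}$ covers $Y_\alpha$, so $E$ is an effective null $s$-cover.

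The delicate point — and the main obstacle — is exactly the passage invoked in the covering step: $\lK(x,T)$ is defined through $\epsilon$-shadowing \emph{pseudo-orbits} $(s_{i_0},\dots,s_{i_{n-1}})$, whereas a null $s$-cover is assembled from Bowen balls $B_n(s_i,2^{-p})$ centred at a \emph{single} ideal point. Converting one into the other while keeping the enumeration effective is where computability of $T$ enters: from a low-complexity shadowing tuple one algorithmically locates an ideal point in the constructive open set $\bigcap_{j<n}T^{-j}B(s_{i_j},\epsilon)$, whose orbit then $2\epsilon$-shadows $x$ and whose index inherits $K(i,n,p)\leqplus K(i_0,\dots,i_{n-1})+K(n)+K(p)$. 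The $O(\log n)$ overhead from the $K(n),K(p)$ terms is harmless: it is absorbed by the strict slack $s-\beta>0$ once $n$ is large, which is the reason for fixing $\beta$ strictly between $\alpha$ and $s$ rather than working at $s$ directly.
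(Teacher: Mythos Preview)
Your proof is correct and follows essentially the same route as the paper: both take the same r.e.\ set $E=\{(i,n,p):K(i,n,p)<\beta n\}$ for a rational $\beta>\alpha$ and verify the two conditions for an effective null cover, with the covering step handled exactly as you do via the single-point reformulation of $\lK$. The only difference is the weight estimate, where the paper notes directly that $\sum_{(i,n,p)\in E}2^{-\beta n}\le\sum_{(i,n,p)\in E}2^{-K(i,n,p)}\le 1$ (Kraft's inequality for the prefix-free universal machine), so $E$ is already a null $\beta$-cover and your auxiliary parameter $s$ is not needed.
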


\begin{proof}
Let $\beta>\alpha$ be a rational number. We define the r.e. set
$E=\{(i,n,p):K(i,n,p)<\beta n\}$. Let $p\in\N$ and $\epsilon=2^{-p}$. If
$x\in Y_\alpha$ then $\lK(x,T,\epsilon)\leq
\alpha<\beta$ so for infinitely many $n$, there is some
$s_i$ such that $x\in B_n(s_i,\epsilon)$ and $K(i,n,p)< \beta n$. So for
all $k$, $\{B_n(s_i,2^{-p}):(i,n,p)\in E,n\geq k\}$ covers
$Y_\alpha$. Moreover, $\sum_{(i,n,p)\in E} 2^{-\beta n}\leq
\sum_{(i,n,p)\in E}2^{-K(i,n,p)}\leq 1$.

$E$ is then an effective null $\beta$-cover of $Y_\alpha$, so
$h^\eff_2(T,Y_\alpha)\leq \beta$. And this is true for every rational $\beta>\alpha$.
\end{proof}

\begin{lemma}\label{k_less_h}
Let $Y\subseteq X$. For all $x\in Y$, $\lK(x,T)\leq h^\eff_2(T,Y)$.
\end{lemma}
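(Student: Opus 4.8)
The plan is to prove the bound directly, in contrapositive spirit: it suffices to show that for every rational $s>h^\eff_2(T,Y)$ one has $\lK(x,T)\leq s$ for all $x\in Y$, and then let $s$ decrease to $h^\eff_2(T,Y)$ through the rationals. So I fix such an $s$, fix an effective (r.e.) null $s$-cover $E\subseteq\N^3$ of $Y$, and fix $x\in Y$ together with a precision scale $p$. The covering clause of the definition, applied to this $p$ and to every cutoff $k$, hands me a triple $(i,n,p)\in E$ with $n\geq k$ and $x\in B_n(s_i,2^{-p})$; unfolding the Bowen ball, this says $d(T^js_i,T^jx)<2^{-p}$ for all $j<n$, i.e. the orbit of the ideal point $s_i$ shadows the orbit of $x$ at precision $2^{-p}$ for $n$ steps. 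Since this works for every $k$, the set of lengths $n$ for which such a shadowing center exists in $E$ is infinite.

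The heart of the argument is to bound the Kolmogorov complexity of these centers using the summability condition $\sum_{(i,n,p)\in E}2^{-sn}<\infty=:C$. I stratify $E$ by the middle coordinate: for a fixed $n$ the partial sum $\#\{(i,p):(i,n,p)\in E\}\cdot 2^{-sn}$ is dominated by the whole finite sum $C$, so the stratum $E^{(n)}:=\{(i,p):(i,n,p)\in E\}$ is finite with $\log\#E^{(n)}\leq sn+\log C$. As $E$ is r.e. with finite strata, Proposition~\ref{prop_complexity_stratified_set} applies (coding each pair $(i,p)$ by a single integer and using $n$ as the stratification parameter) and gives
$$
K(\uple{i,p})\leqplus J(\log\#E^{(n)})+K(n)\leqplus J(sn+\log C)+K(n)\leqplus sn+O(\log n),
$$
using $J(y)=y+2\log(y+1)$ and $K(n)\leqplus J(\log n)$; since $n$ reconstructs the triple from the pair, also $K(i,n,p)\leqplus sn+O(\log n)$, hence $\tfrac1n K(i,n,p)\leq s+o(1)$. (Alternatively one could feed the lower semicomputable weight equal to $2^{-sn}$ on $E$ and $0$ elsewhere into the Coding theorem~\ref{coding_theorem}; the stratified estimate is preferable here because it needs no a priori rational bound on $C$.)

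It remains to convert this into a bound on the orbit complexity. By the second remark following the definition of orbit complexity, the complexity $K(i,n,p)$ of the Bowen-ball description dominates $K(i)$ up to the additive terms $K(n)+K(p)$; and the covering center $s_i$ is, by the shadowing property just noted, an admissible center for the single-point quantity, so $\K'_n(x,T,2^{-p})\leq K(i)\leqplus sn+O(\log n)$. By the first remark, the single-point and the pseudo-orbit quantities share the same growth rate (at the cost of halving the scale), so $\K_n(x,T,\epsilon)\leqplus sn+O(\log n)$ for $\epsilon$ of order $2^{-p}$. Evaluating along the infinite set of lengths $n$ produced above yields $\tfrac1n\K_n(x,T,\epsilon)\leq s+o(1)$, whence $\lK(x,T,\epsilon)=\liminf_n\tfrac1n\K_n(x,T,\epsilon)\leq s$. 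This holds for every scale $p$, and since $\lK(x,T,\epsilon)$ increases to $\lK(x,T)$ as $\epsilon\to0^+$ we get $\lK(x,T)\leq s$; letting $s\downarrow h^\eff_2(T,Y)$ finishes.

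The step I expect to be the main obstacle is the last transfer: passing from the single covering center $s_i$, around which the Bowen ball is built, to a genuine low-complexity shadowing object for $x$ in the sense of the pseudo-orbit definition of $\lK$. This is exactly what the two remarks after the orbit-complexity definition provide, but it rests on approximating the iterates $T^js_i$ by ideal points, so it is where the computational hypotheses on $T$ enter; one must also check that the auxiliary additive terms are genuinely $o(n)$ so that they vanish after dividing by $n$ and taking the $\liminf$. The complexity estimate of the second paragraph, by contrast, is purely combinatorial — the one nontrivial observation there being that summability of $\sum 2^{-sn}$ forces each $n$-stratum of $E$ to have cardinality at most $C\,2^{sn}$ — and is robust.
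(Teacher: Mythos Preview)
Your argument is correct and follows the paper's route: fix $s>h^\eff_2(T,Y)$, take an effective null $s$-cover $E$, bound the complexity of triples in $E$ from the summability condition, and use the covering clause at each scale $p$ to produce infinitely many $n$ with cheap shadowing, hence $\lK(x,T,2^{-p})\leq s$. The one substantive difference is how you get the complexity bound. The paper applies the Coding Theorem directly: since $\sum_{(i,n,p)\in E}2^{-sn}<\infty$, normalize by any integer $N$ exceeding this sum to obtain a lower semi-computable subprobability on $\N^3$, whence $K(i,n,p)\leq sn+c$ with a genuine constant. Your worry that this ``needs a rational bound on $C$'' is misplaced---any integer upper bound on the finite sum suffices, and $\log N$ is absorbed into $c$---so the Coding Theorem route is in fact the cleaner one. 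Your alternative via Proposition~\ref{prop_complexity_stratified_set} is perfectly valid and gives $sn+O(\log n)$, which is just as good after dividing by $n$. Finally, you are more explicit than the paper about the last transfer: the paper simply writes $\K_n(x,T,2^{-p})\leq sn+c$, tacitly identifying $\K_n$ with the Bowen-ball description $K(i,n,p)$ as in the second remark after the definition of orbit complexity; your observation that passing from the single center $s_i$ to an actual shadowing pseudo-orbit of ideal points uses computability of $T$ (via the first remark) is correct and is a point the paper glosses over.
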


\begin{proof}
Let $s>h^\eff_2(T,Y)$: $Y$ has an effective null $s$-cover $E$. As
$\sum_{(i,n,p)\in E} 2^{-s n}<\infty$, by the coding theorem $K(i,n,p)\leq
sn+c$ for some constant $x$, which does not depend on $i,n,p$. If $x\in Y$,
then for each $p,k$, $x$ is in a ball $B_n(s_i,2^{-p})$ for some $n\geq k$
with $(i,n,p)\in E$. Then $\K_n(x,T,2^{-p})\leq sn+c$ for infinitely many $n$, so
$\lK(x,T,2^{-p})\leq s$. As this is true for all $p$,
$\lK(x,T)\leq s$. As this is true for all $s>h^\eff_2(T,Y)$, we can conclude.
\end{proof}

\begin{proof}[Proof of theorem \ref{theorem_eff_top_ent_orbit_complexity}]
By lemma \ref{k_less_h}, $\alpha:=\sup_{x\in Y} \lK(x,T)\leq
h^\eff_2(T,Y)$. Now, as $Y\subseteq Y_\alpha$, $h^\eff_2(T,Y)\leq
h^\eff_2(T,Y_\alpha)\leq \alpha$ by lemma \ref{h_less_k}.
\end{proof}

The definition of an effective null $\alpha$-cover involves a summable
computable sequence. The universality of the sequence
$2^{-K(i)}$ among summable lower semi-computable
sequences is at the core of the proof of the preceding theorem, which
states that there is a universal effective null $\alpha$-cover, for every
$\alpha\geq 0$. In other words, there is a maximal set of effective
topological entropy $\leq\alpha$, and this set is $Y_\alpha=\{x\in
X:\lK(x,T)\leq\alpha\}$.

The definition of the topological entropy as a capacity could be also made
effective, restricting to effective covers. Classical capacity does not
share with Hausdorff dimension the countable stability. For the same
reason, its effective version is not related with the orbit complexity as
strongly as the effective topological entropy is. Nevertheless, a weaker
relation holds, which is sufficient for our purpose: the upper complexity
of orbits is bounded by the effective capacity. We do not develop this and
only state the needed property (which implicitly uses the fact that the
effective capacity coincides with the classical capacity for a compact
computable metric space):

\begin{lemma}\label{lemma_upper_complexity_entropy}
Let $X$ be a compact computable metric space. For all $x\in X$,
$\uK(x,T)\leq h_1(T,X)$.
\end{lemma}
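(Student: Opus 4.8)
The plan is to introduce an effective version of the capacity $h_1$ and to squeeze $\uK$ between it and the classical capacity, the first inequality being the effective heart of the argument and the second being where compactness enters.

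First I would set up effective spanning numbers for ideal points. Since $T$ is computable, each Bowen ball $B_n(s_i,\epsilon)=\bigcap_{j<n}(T^j)^{-1}B(T^js_i,\epsilon)$ is a finite intersection of r.e.\ open sets, hence r.e.\ open uniformly in $i,n$ (with $\epsilon=2^{-p}$). For fixed $p$ the goal is an r.e.\ family $(E_n)_n$, $E_n\subseteq\N$ finite, such that $\{B_n(s_i,2^{-p}):i\in E_n\}$ covers $X$ and $\frac1n\log|E_n|$ converges to an effective capacity $h_1^\eff(T,X,2^{-p})$. Granting such a family, the complexity bound is clean: for any $x$ there is $i\in E_n$ with $x\in B_n(s_i,2^{-p})$, i.e.\ the orbit of $s_i$ shadows that of $x$ at precision $2^{-p}$ for $n$ steps, so $\K'_n(x,T,2^{-p})\le K(i)$; Proposition~\ref{prop_complexity_stratified_set} applied to the r.e.\ set $\{(n,i):i\in E_n\}$ then gives $K(i)\leqplus J(\log|E_n|)+K(n)$. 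Dividing by $n$ and taking $\limsup_n$ kills $K(n)/n$ and leaves $\uK(x,T,2^{-p})\le h_1^\eff(T,X,2^{-p})$; passing from $\K'$ to $\K$ via the Remark following the definition of orbit complexity, and then letting $p\to\infty$, yields $\uK(x,T)\le h_1^\eff(T,X)$. I would stress that invoking Proposition~\ref{prop_complexity_stratified_set} is what keeps the estimate tight: the fibre size is charged only once, through $J(\log|E_n|)$, so one does \emph{not} pay an extra $\log|E_n|\approx n\,h_1$ term for naming the cover.

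It then remains to identify $h_1^\eff(T,X)$ with $h(T)$. At the level of spanning numbers the growth rates agree: replacing each centre of a minimal $(n,\epsilon)$-spanning set by a nearby ideal point (possible because each $T^j$ is continuous and $\S$ is dense, so $\{z:d_n(z,y)<\epsilon\}$ is a non-empty open set) and using the $d_n$-triangle inequality gives $N(X,n,2\epsilon)\le N'(X,n,2\epsilon)\le N(X,n,\epsilon)$, where $N'$ counts ideal-point covers. Thus $\limsup_n\frac1n\log N'(X,n,\epsilon)$ is squeezed between the classical capacities at scales $2\epsilon$ and $\epsilon$, both of which tend to $h(T)$ as $\epsilon\to0$. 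The effectivity is supplied by compactness: since the Bowen balls are uniformly r.e.\ open and $X$ is (effectively) compact, covering of $X$ by finitely many of them is semi-decidable, so finite covers can be algorithmically enumerated and covers of size approaching $N'(X,n,2^{-p})$ are eventually found, uniformly in $n$.

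The main obstacle is exactly this last step — producing an r.e.\ family $(E_n)_n$ whose fibres are of \emph{near-minimal} size. Because coverage is only semi-decidable, the ideal-point spanning number $N'(X,n,\epsilon)$ is a priori only upper semi-computable, so one cannot simply commit each fibre $E_n$ to a minimal cover (the first cover discovered may be far too large, and an r.e.\ set cannot retract it). I expect the clean way around this to mirror Lemmas~\ref{h_less_k} and~\ref{k_less_h}: rather than committing to a single cover, build a lower semi-computable weight assigning mass $\approx 2^{-sn}$ (for a rational $s>h(T)$) to the ideal points of the enumerated $(n,\epsilon)$-covers, arranged so that the total mass stays $\le1$; the Coding Theorem~\ref{coding_theorem} then bounds $K(i)$ by $sn+O(1)$ for every $i$ lying in a cover, giving $\uK(x,T,\epsilon)\le s$ directly and, after $s\downarrow h(T)$ and $\epsilon\to0$, the desired inequality, thereby bypassing the commitment problem altogether. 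Alternatively one may simply invoke the coincidence of effective and classical capacity on compact computable metric spaces as a known fact, as the surrounding text does.
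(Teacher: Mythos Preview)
Your setup and the use of Proposition~\ref{prop_complexity_stratified_set} match the paper exactly, and you have correctly isolated the real difficulty: producing an r.e.\ family of $(n,2^{-p})$-spanning sets whose sizes are controlled by the true spanning numbers. Where you and the paper diverge is in how this obstacle is resolved. The paper does \emph{not} try to certify coverage at all (which, as you note, is only semi-decidable and leads to the commitment problem). Instead it runs a greedy selection over the ideal points: at step $i$, for each already-selected $s$ it races the semi-decisions $d_n(s_i,s)<2^{-p-1}$ and $d_n(s_i,s)>2^{-p-2}$ (both semi-decidable since $T$ is computable and hence $d_n$ is computable on ideal points), rejecting $s_i$ if the first wins for some $s$ and selecting it otherwise. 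By construction the selected set $E_{n,p}$ is $(n,2^{-p-2})$-separated, so ordinary (non-effective) compactness forces $|E_{n,p}|\le M(X,n,2^{-p-2})$; a short argument shows it is also $(n,2^{-p})$-spanning. One then feeds this $E$ into Proposition~\ref{prop_complexity_stratified_set} exactly as you describe.

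Your Coding-Theorem workaround can be made to work, but note that it leans on being able to semi-decide ``this finite family of Bowen balls covers $X$'', i.e.\ on \emph{effective} compactness, which the lemma does not assume and the paper's proof does not use. The greedy separated-set trick buys you finiteness of the fibres for free from mere compactness, sidestepping both the commitment problem and the need for effective compactness; your approach trades that elegance for a more generic weight argument at the cost of an extra hypothesis.
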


\begin{proof}
We first construct a r.e. set $E\subseteq \N^3$ such that for each $n$,
$p$, $\{s_i:(i,n,p)\in E\}$ is a $(n,2^{-p})$-spanning set and a
$(n,2^{-p-2})$-separated set. Let us fix $n$ and $p$ and enumerate
$E_{n,p}=\{i:(i,n,p)\in E\}$, in a uniform way. The algorithm starts with
$S=\emptyset$ and $i=0$. At step $i$ it analyzes $s_i$ and decides to add
it to $S$ or not, and goes to step $i+1$. $E_{n,p}$ is the set of points
which are eventually added to $S$.

\begin{description}
\item[Step $\boldsymbol{i}$] for each ideal point $s\in
S$, test in parallel $d_n(s_i,s)<2^{-p-1}$ and $d_n(s_i,s)>2^{-p-2}$: at
least one of them must stop. If
the first one stops first, reject $s_i$ and go to Step $i+1$. If the second
one stops first, go on with the other points $s\in S$: if all $S$ has been
considered, then add $s_i$ to $S$ and go to Step $i+1$.
\end{description}

By construction, the set of selected ideal points forms a
$(n,2^{-p-2})$-separated set. If there is $x\in X$ which is at distance at
least $2^{-p}$ from every selected point, then let $s_i$ be an ideal point
$s_i$ with $d_n(x,s_i)<2^{-p-1}$: $s_i$ is at distance at least $2^{-p-1}$
from every selected point, so at step $i$ it must have been
selected, as the first test could not stop. This is a contradiction: the
selected points form a $(n,2^{-p})$-spanning set.

From the properties of $E_{n,p}$ it follows that $N(X,n,2^{-p})\leq |
E_{n,p}|\leq M(X,n,2^{-p-2})$, and then 
$$
\sup_p\left(\limsup \frac{1}{n}\log | E_{n,p}|\right)=h_1(T,X)
$$
If $\beta>h_1(T,X)$ is a rational number, then for each $p$, there is
$k\in \N$ such that $\log | E_{n,p}|<\beta n$ for all $n\geq k$.

Now, for $s_i\in E_{n,p}$, $K(i)\leqplus \log |E_{n,p}|+2\log\log
|E_{n,p}|+K(n,p)$ by proposition \ref{prop_complexity_stratified_set}. Take $x\in X$: $x$ is in some $B_n(s_i,2^{-p})$ for each
$n$, so $\uK(x,T,2^{-p})\leq \limsup_n \frac{1}{n}\log |E_{n,p}|\leq \beta$
as $\log|E_{n,p}|<\beta n$ for all $n\geq k$. As this is true for all $p$
and all $\beta>h_1(T,X)$, $\uK(x,T)\leq h_1(T,X)$ and this for all $x\in
X$.
\end{proof}

We are now able to prove 
theorem \ref{theorem_top_entropy}. Combining the several results
established above:



\[
\begin{array}{ccccc}
h_1(T,X) = h_2(T,X) & \leq & h^\eff_2(T,X) = \sup_{x\in X}\lK(x,T) &
\leq & \sup_{x\in X}\uK(x,T)\leq h_1(T,X) \\
\text{(theorem \ref{theorem_pesin})} & &
\text{(theorem \ref{theorem_eff_top_ent_orbit_complexity})} & &
\text{(lemma \ref{lemma_upper_complexity_entropy})}
\end{array}
\]

and the statement is proved.





\bibliographystyle{alpha}
\bibliography{bibliography}{}



\end{document}